\newcommand{\assign}{:=}
\newcommand{\longhookrightarrow}{{\lhook\joinrel\relbar\joinrel\rightarrow}}
\newcommand{\nin}{\not\in}
\newcommand{\tmmathbf}[1]{\ensuremath{\boldsymbol{#1}}}
\newcommand{\tmop}[1]{\ensuremath{\operatorname{#1}}}
\newcommand{\tmstrong}[1]{\textbf{#1}}
\newcommand{\tmtextit}[1]{{\itshape{#1}}}
\newenvironment{enumeratealpha}{\begin{enumerate}[a{\textup{)}}] }{\end{enumerate}}
\newenvironment{enumeratenumeric}{\begin{enumerate}[1.] }{\end{enumerate}}
\newenvironment{enumerateroman}{\begin{enumerate}[i.] }{\end{enumerate}}
\newenvironment{itemizedot}{\begin{itemize} }{\end{itemize}}
\newenvironment{proof}{\noindent\textbf{Proof\ }}{\hspace*{\fill}$\Box$\medskip}
\definecolor{grey}{rgb}{0.75,0.75,0.75}
\definecolor{orange}{rgb}{1.0,0.5,0.5}
\definecolor{brown}{rgb}{0.5,0.25,0.0}
\definecolor{pink}{rgb}{1.0,0.5,0.5}
\newtheorem{corollary}{Corollary}
\newtheorem{definition}{Definition}
\newtheorem{lemma}{Lemma}
\newtheorem{proposition}{Proposition}
{\theorembodyfont{\rmfamily}\newtheorem{remark}{Remark}}
\newtheorem{theorem}{Theorem}
\providecommand{\xRightarrow}[2][]{\mathop{\Longrightarrow}\limits_{#1}^{#2}}
\begin{document}

\title{Carlson's $<_1$-relation on the class of addtive principal
ordinals}\author{Parm\'enides Garc\'{\i}a Cornejo}\maketitle

\begin{abstract}
  This is the first in a series of at least 4 articles. We will study Carlson's
  $<_1$-relation in the whole class of ordinals and later we will link it with ordinals
  $\alpha \leqslant \left| \Pi^1_1 - \tmop{CA}_0 \right|$.
  
  The main motivation to study $<_1$ are the works of T. Carlson and G.
  Wilken. The first version $\prec_1$ of $<_1$ was used by Carlson as a tool
  to show Reinhardt's conjecture: The Strong Mechanistic Thesis is consistent
  with Epistemic Arithmetic (see {\cite{Carlson3}}); moreover, Carlson showed
  a characterization of $\varepsilon_0$ in terms of $\prec_1$ (see
  {\cite{Carlson1}}) and indeed, set up a different approach to ordinal notation
  systems based on these ideas (see {\cite{Carlson2}}). $<_1$ is a binary
  relation in the class of ordinals and in it's original form, $\alpha <_1
  \beta$ asserts that the structure $(\alpha, <, +, <_1)$ is a
  $\Sigma_1$-substructure of $(\beta, <, +, <_1)$. Here, instead of the
  original definition of \ $<_1$, an equivalent, model-theoretical notion (see
  appendix) is consider as the fundamental notion: $\alpha <_1 \beta$ means
  $\alpha < \beta$ and the following assertion: for any finite subset $Z$ of
  $\beta$, there exists an ($<, +, <_1$)-embedding $h : Z \longrightarrow
  \alpha$ with $h \left|_{Z \cap \alpha} = \tmop{Id}_{Z \cap \alpha} \right.$
  (see definition \ref{Definition_of_<less>=_1}). Moreover, $\alpha
  \leqslant_1 \beta$ stands for $\alpha = \beta$ or $\alpha <_1 \beta$.
  
  The study of $<_1$, as done here, is then a study of (a sort of)
  isomorphisms between the finite subsets of an ordinal. In this introductory
  article we will study the (canonical) isomorphisms $\left\{ g \left( 0,
  \alpha, \beta \right) | \alpha, \beta \in \mathbbm{P} \right\}$, the
  $<^0$-relation and it's cofinality properties and see how it is that $<_1$
  induces, through all of these notions, thinner $\kappa$-club classes of
  ordinals.
  
  In comming articles it will be shown the complete generalization of these
  ideas to the thinnest $\kappa$-club classes induced by $<_1$.
\end{abstract}

\section{Basic conventions used throughout this work}

We use the standard logical and set theoretical symbols in it's standard way:
$\wedge, \vee, \Longrightarrow, \Longleftrightarrow, \forall, \exists, \neg,
\emptyset, \cup, \cap, \subset, =, \in$, etc.

By $B \subset_{\tmop{fin}} A$ we mean $B$ is a finite subset of $A$.

$h : A \longrightarrow B$ denotes that $h$ is a functional with domain $A$ and
codomain $B$.

For a functional $h : A \longrightarrow B$ and $C \subset A$, we define $h [C]
\assign \{h (x) | x \in C\}$.

For a functional $h : A \longrightarrow B$, we denote $\tmop{Dom} h \assign A$
and $\tmop{Im} h \assign h [A]$.

By $\tmop{OR}$ we denote the class of ordinals.

$0, 1, 2$,... denote, as usual, the finite ordinals.

$\omega$ denotes the first infinite ordinal.

$\tmop{Lim}$ denotes the class of limit ordinals.

$\mathbbm{P}$ denotes the class of additive principal ordinals.

$\mathbbm{E}$ denotes the class of epsilon numbers.

$<$, $+$, $\lambda x. \omega^x$ denote the usual order, the usual addition and
the usual $\omega$-base-exponentiation in the ordinals, respectively.

For an ordinal $\alpha \in \tmop{OR}$, $\varepsilon_{\alpha}$ denotes the
$\alpha$-th epsilon number.

$\min A$ denotes the minimum element of $A$ (with respect to the order $<$).

$\max A$ denotes the maximum element of $A$ (with respect to $<$).

In case $\exists \alpha \in \tmop{OR} .A \subset \alpha$, then $\sup A$
denotes the minimal upper bound of $A$ with respect to $<$ (the supremum of
$A$).

$\tmop{Lim} A \assign \tmop{Lim} (A) \assign \{\alpha \in \tmop{OR} | \alpha =
\sup (A \cap \alpha)\}$.

By $(\xi_i)_{i \in I} \subset A$ we mean $(\xi_i)_{i \in I}$ is a sequence of
elements of $A$.

Given an ordinal $\alpha \in \tmop{OR}$ and a sequence $(\xi_i)_{i \in I}
\subset \tmop{OR}$, we say that $(\xi_i)_{i \in I}$ is cofinal in $\alpha$
whenever $I \subset \tmop{OR}$, $\forall i \in I \forall j \in I.i \leqslant j
\Longrightarrow \xi_i \leqslant \xi_j$, $\forall i \in I \exists j \in I.i < j
\wedge \xi_i < \xi_j$ and $\sup \{\xi_i | i \in I\} = \alpha$. By
$\xi_i\underset{cof}{\longhookrightarrow}\alpha$ we mean that the sequence
$(\xi_i)_{i \in I}$ is cofinal in $\alpha$.

Whenever we write $\alpha =_{\tmop{CNF}} \omega^{A_1} a_1 + \ldots +
\omega^{A_n} a_n$, we mean that $\omega^{A_1} a_1 + \ldots + \omega^{A_n} a_n$
is the cantor normal form of $\alpha$, that is: $\alpha = \omega^{A_1} a_1 +
\ldots + \omega^{A_n} a_n$, $a_1, \ldots, a_n \in \omega \backslash \{0\}$,
$A_1, \ldots, A_n \in \tmop{OR}$ and $A_1 > \ldots > A_n$.

Given two ordinals $\alpha, \beta \in \tmop{OR}$ with $\alpha \leqslant
\beta$, we denote:

$[\alpha, \beta] \assign \{\sigma \in \tmop{OR} | \alpha \leqslant \sigma
\leqslant \beta\}$

$[\alpha, \beta) \assign \{\sigma \in \tmop{OR} | \alpha \leqslant \sigma <
\beta\}$

$(\alpha, \beta] \assign \{\sigma \in \tmop{OR} | \alpha < \sigma \leqslant
\beta\}$

$(\alpha, \beta) \assign \{\sigma \in \tmop{OR} | \alpha < \sigma < \beta\}$

Given $\alpha \in \mathbb{E}$, we denote by $\alpha^+$ or by $\alpha (+^1)$ to
$\min \{e \in \mathbbm{E}| \alpha < e\}$.

For a set $A$, $|A|$ denotes the cardinality of $A$; the only one exception to
this convention is when we denote as $| \tmop{ID}_n |$ and $|
\Pi^1_1$-$\tmop{CA}_0 |$ to the proof theoretic ordinals of the theories
$\tmop{ID}_n$ and $\Pi^1_1$-$\tmop{CA}_0$ respectively.

\section{The $<_1$-relation}

Our purpose is to study the (binary) relation $<_1$ defined by recursion on
the ordinals as follows

\begin{definition}
  \label{Definition_of_<less>=_1}Let $\beta \in \tmop{OR}$ be arbitrary and
  suppose $\alpha' <_1 \beta'$ has already been defined for any $\beta' \in
  \beta \cap \tmop{OR}$ and for any $\alpha' \in \tmop{OR}$. Let $\alpha \in
  \tmop{OR}$ be arbitrary.Then \\
  $\alpha <_1 \beta$ $: \Longleftrightarrow$ $\alpha < \beta$ and $\forall Z
  \subset_{\tmop{fin}} \beta \exists \tilde{Z} \subset_{\tmop{fin}} \alpha .
  \exists h$ such that:
  
  \ \ \ \ \ \ \ \ \ \ \ (i) $h : (Z, +, <, <_1) \longrightarrow ( \tilde{Z},
  +, <, <_1)$ is an isomorphism, that is:
  
  \ \ \ \ \ \ \ \ \ \ \ \ \ \ \ + $h : Z \longrightarrow \tilde{Z}$ is a
  bijection.
  
  \ \ \ \ \ \ \ \ \ \ \ \ \ \ \ + For any $a_1, a_2 \in Z$
  
  \ \ \ \ \ \ \ \ \ \ \ \ \ \ \ \ \ \ \ $\bullet$ $a_1 + a_2 \in Z
  \Longleftrightarrow h (a_1) + h (a_2) \in \tilde{Z}$
  
  \ \ \ \ \ \ \ \ \ \ \ \ \ \ \ \ \ \ \ $\bullet$ If $a_1 + a_2 \in Z$, then
  $h (a_1 + a_2) = h (a_1) + h (a_2)$.
  
  \ \ \ \ \ \ \ \ \ \ \ \ \ \ \ + For any $a_1, a_2 \in Z$,
  
  \ \ \ \ \ \ \ \ \ \ \ \ \ \ \ \ \ \ \ $\bullet$ $a_1 < a_2
  \Longleftrightarrow h (a_1) < h (a_n)$.
  
  \ \ \ \ \ \ \ \ \ \ \ \ \ \ \ \ \ \ \ $\bullet$ $a_1 <_1 a_2
  \Longleftrightarrow h (a_1) <_1 h (a_n)$.
  
  \ \ \ \ \ \ \ \ \ \ \ (ii) $h|_{Z \cap \alpha} = \tmop{Id} |_{Z \cap
  \alpha}$, where $\tmop{Id} |_{Z \cap \alpha} : Z \cap \alpha \longrightarrow
  Z \cap \alpha$ is the identity function.

  By $\alpha \leqslant_1 \beta$ we mean that $\alpha <_1 \beta$ or $\alpha =
  \beta$. Moreover, to make our notation simpler, we will write $h|_{\alpha} =
  \tmop{Id} |_{\alpha}$ instead of $h|_{Z \cap \alpha} = \tmop{Id} |_{Z \cap
  \alpha}$.
\end{definition}

\begin{remark}
  We will eventually use functions $f : Z \longrightarrow \tilde{Z}$ that are
  $\lambda x. \omega^x$-isomorphisms; of course, by this we mean the analogous
  situation as the one we had with $+$ above: \\
  For any $a \in Z$,
  
  $\bullet$ $\omega^a \in Z \Longleftrightarrow f (\omega^{\alpha}) \in
  \tilde{Z}$
  
  $\bullet$ If $\omega^a \in Z$, then $f (\omega^a) = \omega^{f (a)}$.
\end{remark}

Some of the most basic properties that $\leqslant_1$ satisfies are the
following

\begin{proposition}
  \label{connectedness_and_continuity}Let $\alpha, \beta, \gamma \in
  \tmop{OR}$.
  \begin{enumeratealpha}
    \item $\alpha \leqslant_1 \beta \Longrightarrow \{x \in \tmop{OR} | \alpha
    \leqslant_1 x \leqslant \beta\} = [\alpha, \beta]$.
    
    \item Let $(\xi_i)_{i \in I} \subset \tmop{OR}$ be a sequence such that
    $\xi_i \underset{\tmop{cof}}{\longhookrightarrow} \beta$. Then \\
    $[\forall i \in I. \alpha \leqslant_1 \xi_i] \Longrightarrow \alpha
    \leqslant_1 \beta$.
    
    \item $\alpha \leqslant_1 \beta \leqslant_1 \gamma \Longrightarrow \alpha
    \leqslant_1 \gamma$.
    
    \item Let $(\xi_i)_{i \in I} \subset \tmop{OR}$ be a sequence such that
    $\xi_i \underset{\tmop{cof}}{\longhookrightarrow} \beta$. Then \\
    $[\exists i_0 \in I. \alpha \nless_1 \xi_{i_0} \wedge \alpha < \xi_{i_0}]
    \Longrightarrow \alpha \nless_1 \beta$.
  \end{enumeratealpha}
\end{proposition}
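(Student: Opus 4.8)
The four parts are all proved directly from Definition \ref{Definition_of_<less>=_1}, exploiting the ``finite-subset embedding'' formulation. For part (a), the plan is to show $\alpha \leqslant_1 x$ for every $x \in [\alpha,\beta]$: given $x$ with $\alpha < x < \beta$ (the endpoints being trivial), and given $Z \subset_{\tmop{fin}} x$, apply the hypothesis $\alpha <_1 \beta$ to the \emph{same} set $Z$ (which is also a finite subset of $\beta$) to obtain $\tilde Z \subset_{\tmop{fin}} \alpha$ and an isomorphism $h : Z \to \tilde Z$ fixing $Z \cap \alpha$; since $\tilde Z \subset \alpha \subset x$, this same $h$ witnesses the requirement for $\alpha <_1 x$. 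The only subtlety is that the $<_1$-clause of the isomorphism condition refers to $<_1$ restricted to elements of $Z$ and of $\tilde Z$, and these lie below $x$ and below $\alpha$ respectively, so by the recursive set-up in Definition \ref{Definition_of_<less>=_1} the relation $<_1$ on those elements is already fixed and is the same whether we think of $Z$ as a subset of $x$ or of $\beta$; hence $h$ is literally the same witness. One must also observe $\{x : \alpha \leqslant_1 x \leqslant \beta\} \subseteq [\alpha,\beta]$ is immediate since $\alpha \leqslant_1 x$ forces $\alpha \leqslant x$.

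For part (b), suppose $\alpha \leqslant_1 \xi_i$ for all $i$, with $\xi_i \underset{\tmop{cof}}{\longhookrightarrow}\beta$; I want $\alpha \leqslant_1 \beta$. If $\alpha = \beta$ we are done (this happens only if $\alpha$ is already in the range cofinally, forcing equality), so assume $\alpha < \beta$. Given any $Z \subset_{\tmop{fin}} \beta$, by cofinality there is $i \in I$ with $Z \subset \xi_i$ (take $i$ large enough that $\xi_i > \max Z$; note $\alpha<\xi_i$ too, so the case $\alpha=\xi_i$ is excluded and $\alpha <_1 \xi_i$). Now apply $\alpha <_1 \xi_i$ to $Z$ to get the desired $\tilde Z$ and $h$. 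This is exactly the witness needed for $\alpha <_1 \beta$. Part (c) is the transitivity statement: assuming $\alpha \leqslant_1 \beta$ and $\beta \leqslant_1 \gamma$ (reducing to the case $\alpha < \beta < \gamma$), given $Z \subset_{\tmop{fin}} \gamma$, first apply $\beta <_1 \gamma$ to $Z$ to get $Z' \subset_{\tmop{fin}} \beta$ and an isomorphism $h_1 : Z \to Z'$ with $h_1|_\beta = \tmop{Id}$; then apply $\alpha <_1 \beta$ to $Z'$ to get $\tilde Z \subset_{\tmop{fin}} \alpha$ and $h_2 : Z' \to \tilde Z$ with $h_2|_\alpha = \tmop{Id}$; the composite $h_2 \circ h_1 : Z \to \tilde Z$ is an isomorphism of $(+,<,<_1)$-structures, and one checks $h_2 \circ h_1$ fixes $Z \cap \alpha$ pointwise because $h_1$ fixes it (as $\alpha \le \beta$, anything in $Z \cap \alpha$ is in $Z \cap \beta$, hence fixed by $h_1$, and lands in $Z' \cap \alpha$, hence fixed by $h_2$).

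Part (d) is the contrapositive-flavored statement: if some $\xi_{i_0}$ in the cofinal sequence satisfies $\alpha < \xi_{i_0}$ but $\alpha \nless_1 \xi_{i_0}$, then $\alpha \nless_1 \beta$. The plan is to argue by contradiction: suppose $\alpha <_1 \beta$. Since $\alpha < \xi_{i_0} < \beta$ (the cofinal sequence being strictly increasing past any point below $\beta$, or at least eventually exceeding $\alpha$ — and if $\xi_{i_0}=\beta$ the hypothesis directly contradicts $\alpha<_1\beta$), part (a) applied to $\alpha <_1 \beta$ gives $\alpha \leqslant_1 \xi_{i_0}$, and since $\alpha < \xi_{i_0}$ this means $\alpha <_1 \xi_{i_0}$, contradicting the hypothesis. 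The one point requiring care is confirming $\xi_{i_0} \leqslant \beta$ and handling $\xi_{i_0}=\beta$ separately (trivial). The main obstacle across all parts is not any single deep step but rather being scrupulous that the $<_1$-relation appearing inside the isomorphism condition is well-defined and \emph{stable} under enlarging the ambient ordinal — i.e., that $<_1$ on a fixed pair of small ordinals does not change when we pass from $x$ to $\beta$ to $\gamma$ — which is exactly what the recursive structure of Definition \ref{Definition_of_<less>=_1} guarantees, and which makes the ``same witness $h$'' arguments legitimate. Composition in (c) requires checking that a composite of $(+,<,<_1)$-isomorphisms is again one, which is routine from the bijective biconditional-preserving definition.
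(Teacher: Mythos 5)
Your proposal is correct and follows exactly the route the paper intends: parts (a)--(c) are verified directly from Definition \ref{Definition_of_<less>=_1} (reusing the same witness $h$ for (a), passing to a sufficiently large $\xi_i$ for (b), composing isomorphisms for (c)), and (d) is deduced from (a) by contradiction, which is precisely what the paper's one-line proof asserts. The only point to tighten is in (b): choose $i$ large enough that $\xi_i$ exceeds both $\max Z$ and $\alpha$ (possible since $\sup_i \xi_i = \beta > \alpha$), so that $\alpha <_1 \xi_i$ is genuinely available.
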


\begin{proof}
  The proofs of $a)$, $b)$ and $c)$ follow direct from definition
  \ref{Definition_of_<less>=_1}. Moreover, $d)$ follows easily from $a)$.
\end{proof}

We call $\leqslant_1${\tmstrong{-connectedness}} (or just connectedness) to
the property $a$) of previous proposition \ref{connectedness_and_continuity};
moreover, we call $\leqslant_1${\tmstrong{-continuity}} (or just continuity)
and $\leqslant_1${\tmstrong{-transitivity}} (or just transitivity) to the
properties $b$) and $c$) (respectively) of the same proposition. We will make
use of the three of them over and over along all our work.

\begin{proposition}
  \label{existence_of_m(alpha)}Let $\alpha, \beta \in \tmop{OR}$ with $\alpha
  < \beta$ and $\alpha \nless_1 \beta$. Then there exists $\gamma \in [\alpha,
  \beta)$ such that
  \begin{enumeratealpha}
    \item $\{x \in \tmop{OR} | \alpha \leqslant_1 x\} = [\alpha, \gamma]$.
    
    \item $\{x \in \tmop{OR} | \alpha < x, \alpha \nless_1 x\} = [\gamma + 1,
    \infty)$.
    
    \item  For any $\sigma > \gamma$, $\gamma \nless_1 \sigma$.
  \end{enumeratealpha}
\end{proposition}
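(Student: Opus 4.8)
The plan is to let $\gamma := \sup\{x \in \tmop{OR} \mid \alpha \leqslant_1 x\}$ and show this $\gamma$ has the three desired properties; the bulk of the work is checking that the supremum is actually attained (so that $\alpha \leqslant_1 \gamma$) and that $\gamma < \beta$. First I would observe that the set $S := \{x \in \tmop{OR} \mid \alpha \leqslant_1 x\}$ is nonempty (it contains $\alpha$) and, by $\leqslant_1$-connectedness (Proposition \ref{connectedness_and_continuity}(a)), is an interval: for every $x \in S$ we have $[\alpha, x] \subseteq S$. Since $\alpha \nless_1 \beta$ and $\alpha < \beta$, the ordinal $\beta$ witnesses that $S$ is bounded, so $\gamma := \sup S$ exists and $\gamma \leqslant \beta$; combined with $\alpha \in S$ we get $\gamma \in [\alpha, \beta]$, and I still need to rule out $\gamma = \beta$.

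Next I would show $\alpha \leqslant_1 \gamma$, i.e. that the sup is attained. If $\gamma = \alpha$ this is trivial. Otherwise, since $S$ is an interval with $\sup S = \gamma$, the elements of $S$ below $\gamma$ form a cofinal sequence in $\gamma$; applying $\leqslant_1$-continuity (Proposition \ref{connectedness_and_continuity}(b)) with this cofinal sequence $(\xi_i)$, from $\alpha \leqslant_1 \xi_i$ for all $i$ we conclude $\alpha \leqslant_1 \gamma$. Hence $\gamma \in S$, so in fact $S = [\alpha,\gamma]$, which is exactly statement a). Then b) is the complement: for $x > \alpha$, either $\alpha \leqslant_1 x$ (i.e. $x \in S$, i.e. $x \leqslant \gamma$) or $\alpha \nless_1 x$; so $\{x \mid \alpha < x,\ \alpha \nless_1 x\} = (\gamma, \infty) = [\gamma+1,\infty)$. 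In particular $\beta$ lies in this set, so $\gamma + 1 \leqslant \beta$, giving $\gamma < \beta$ and completing the verification that $\gamma \in [\alpha,\beta)$.

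Finally, for c), I would take $\sigma > \gamma$ and argue by contradiction: if $\gamma <_1 \sigma$, then since $\alpha \leqslant_1 \gamma$, transitivity (Proposition \ref{connectedness_and_continuity}(c)) gives $\alpha \leqslant_1 \sigma$, so $\sigma \in S = [\alpha,\gamma]$, contradicting $\sigma > \gamma$. Hence $\gamma \nless_1 \sigma$ for all $\sigma > \gamma$.

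The main obstacle is the limit case in the second paragraph: one must be careful that the set $S$, being an interval bounded by $\beta$, really does produce a genuine cofinal sequence in $\gamma$ of the form required by the hypothesis of $\leqslant_1$-continuity (strictly increasing, indexed by an ordinal, with supremum $\gamma$) — this is straightforward once one notes $\gamma \in \tmop{Lim}$ in that case, since any $x$ with $\alpha \leqslant x < \gamma$ lies in $S$. Everything else is a direct bookkeeping of the interval $S$ and its complement, using only connectedness, continuity, and transitivity.
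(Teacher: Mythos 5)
Your proof is correct and is essentially the paper's argument in dual form: where you take $\gamma = \sup\{x \mid \alpha \leqslant_1 x\}$ and use $\leqslant_1$-continuity to show the supremum is attained, the paper takes $k = \min\{r > \alpha \mid \alpha \nless_1 r\}$ and uses the same continuity property to show $k$ is a successor $\gamma+1$; the uses of connectedness for a) and b) and of transitivity for c) coincide. No changes needed.
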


\begin{proof}
  Let $k \assign \min \{r \in \tmop{OR} |r > \alpha \nless_1 r\}$. Then $k
  \leqslant \beta$. Moreover, since $\forall \sigma \in [\alpha, k) . \alpha
  \leqslant_1 \sigma$, then $k$ must be a successor (otherwise, by
  $\leqslant_1$-continuity would follow $\alpha <_1 k$). So $k = \gamma + 1
  \leqslant \beta$ for some $\gamma \in \tmop{OR}$ \ and therefore $\{x \in
  \tmop{OR} | \alpha \leqslant_1 x\} = [\alpha, \gamma]$. This shows $a$).
  
  On the other hand, note that for any $\sigma \geqslant k$, it is not
  possible that $\alpha \leqslant_1 \sigma$ (otherwise, by
  $\leqslant_1$-connectedness, one gets the contradiction $\alpha <_1 k$).
  This proves $b$).
  
  Finally, observe it is not possible that for some $\sigma > \gamma$, $\gamma
  <_1 \sigma$, otherwise, from $\alpha \leqslant_1 \gamma \leqslant_1 \sigma$
  and $\leqslant_1$-transitivity follows $\alpha <_1 \sigma$, which is
  contradictory with $b)$ (because $\sigma \geqslant k = \gamma + 1$).
\end{proof}

For an ordinal $\alpha$, the ordinal $\gamma$ referred in previous proposition
\ref{existence_of_m(alpha)} will be very important for the rest of our work.
Because of that we make the following

\begin{definition}
  \label{def_max<less>=_1_reach}(The maximum $\leqslant_1$-reach of an
  ordinal). Let $\alpha \in \tmop{OR}$. We define \\
  $m (\alpha) \assign \left\{ \begin{array}{l}
    \max \{\xi \in \tmop{OR} | \alpha \leqslant_1 \xi\} \text{ \tmop{iff}
    \tmop{there} \tmop{is} }  \beta\in\tmop{OR} \text{\tmop{with} } \alpha<\beta \text{ \tmop{and} } \alpha\nless_1\beta\\
    \infty \text{ \tmop{otherwise}, \tmop{that} \tmop{is}, } \forall \beta \in
    \tmop{OR.} \alpha < \beta \Longrightarrow \alpha <_1 \beta
  \end{array} \right .$

  Note that when $m (\alpha) \in \tmop{OR}$, then it is the only one $\gamma
  \in \tmop{OR}$ satisfying $\alpha \leqslant_1 \gamma$ and $\alpha
  \nleqslant_1 \gamma + 1$. Because of this {\tmstrong{we call}} $\tmmathbf{m
  (\alpha)}$ {\tmstrong{the maximum}}
  $\tmmathbf{\leqslant_1}${\tmstrong{-reach of $\alpha$.}}
\end{definition}

\section{Characterization of the ordinals $\alpha$ such that $\alpha <_1
\alpha + 1$}

Up to this moment we do not know whether there are ordinals $\alpha, \beta$
such that $\alpha <_1 \beta$; however, in such a case, since $\alpha < \alpha
+ 1 \leqslant \beta$, then by $\leqslant_1$-connectedness we would conclude
that the relation $\alpha <_1 \alpha + 1$ must hold. This shows that the
simplest nontrivial case when we can expect that something of the form $\alpha
<_1 \beta$ holds is for $\beta = \alpha + 1$. Then, for this simplest case,
what should $\alpha$ satisfy?. The answer to this question is the purpose of
this subsection.

\begin{proposition}
  \label{prop1.02.11.2008}Let $\alpha, \beta \in \tmop{OR}$, $\alpha
  =_{\tmop{CNF}} \omega^{\alpha_1} a_1 + \ldots + \omega^{\alpha_n} a_n$, with
  $n \geqslant 2$ or $a_1 \geqslant 2$. Moreover, suppose $\alpha < \beta$.
  Then $\alpha \nless_1 \beta$.
\end{proposition}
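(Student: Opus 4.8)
The plan is to defeat the putative relation $\alpha <_1 \beta$ using a single three-element subset of $\beta$. The hypothesis on the Cantor normal form of $\alpha$ says precisely that $\alpha$ is not additive principal, so $\alpha$ can be split as $\alpha = \gamma + \delta$ with $0 < \gamma, \delta < \alpha$; the key observation is that any isomorphism $h$ that fixes $\gamma$ and $\delta$ and respects $+$ must send $\alpha = \gamma + \delta$ to $\gamma + \delta = \alpha$, which is impossible once the image is required to lie inside $\alpha$.

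First I would fix the decomposition: put $\gamma := \omega^{\alpha_1}$ and let $\delta$ be the unique ordinal with $\omega^{\alpha_1} + \delta = \alpha$, so that $\delta = \omega^{\alpha_1}(a_1 - 1) + \omega^{\alpha_2} a_2 + \cdots + \omega^{\alpha_n} a_n$. Since $n \geq 2$ or $a_1 \geq 2$, the ordinal $\alpha$ is not $\omega^{\alpha_1}$, hence $\delta \neq 0$ and therefore $\gamma = \omega^{\alpha_1} < \gamma + \delta = \alpha$; and $\delta < \alpha$ is immediate from the displayed Cantor normal form (again using $n \geq 2$ or $a_1 \geq 2$). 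So $\gamma, \delta \in \alpha$, and since $\alpha < \beta$ all of $\gamma, \delta, \alpha$ belong to $\beta$.

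Then I would argue by contradiction. Supposing $\alpha <_1 \beta$, apply Definition~\ref{Definition_of_<less>=_1} to the finite set $Z := \{\gamma, \delta, \alpha\} \subset_{\tmop{fin}} \beta$: we obtain $\tilde{Z} \subset_{\tmop{fin}} \alpha$ and an isomorphism $h : (Z, +, <, <_1) \longrightarrow (\tilde{Z}, +, <, <_1)$ with $h|_{\alpha} = \tmop{Id}|_{\alpha}$. Because $\gamma, \delta < \alpha$ while $\alpha \notin \alpha$, we have $Z \cap \alpha = \{\gamma, \delta\}$, so $h(\gamma) = \gamma$ and $h(\delta) = \delta$. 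Since $\gamma + \delta = \alpha \in Z$, the additive clause of the isomorphism condition gives $h(\alpha) = h(\gamma + \delta) = h(\gamma) + h(\delta) = \gamma + \delta = \alpha$. But $h(\alpha) \in \tilde{Z} \subset_{\tmop{fin}} \alpha$ forces $h(\alpha) < \alpha$, a contradiction. Hence $\alpha \nless_1 \beta$.

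I do not anticipate a real obstacle: the argument is just a direct unwinding of the definition of $<_1$. The only spot that requires any care, and the only place where the Cantor-normal-form hypothesis is used, is the verification that both summands $\gamma$ and $\delta$ lie strictly below $\alpha$ --- equivalently, that $\alpha$ is not additive principal --- which is what lets the identity constraint $h|_{\alpha} = \tmop{Id}|_{\alpha}$ freeze both of them.
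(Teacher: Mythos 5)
Your proof is correct and is essentially the paper's own argument: both take a finite set consisting of $\alpha$ together with summands of its Cantor normal form, use $h|_{\alpha}=\tmop{Id}|_{\alpha}$ to freeze those summands, and derive the contradiction $h(\alpha)=\alpha\nin\alpha$. The only (minor, and arguably favorable) difference is that you split $\alpha$ into just two summands $\gamma+\delta$, which matches the binary form of the $+$-isomorphism clause in Definition~\ref{Definition_of_<less>=_1} exactly, whereas the paper uses all $n$ CNF terms at once.
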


\begin{proof}
  Case $n \geqslant 2$. \\
  Since $\alpha < \beta$, then $\{\omega^{\alpha_1} a_1, \ldots,
  \omega^{\alpha_n} a_n \} \subset \alpha \cap \beta$, but $\beta \ni
  \omega^{\alpha_1} a_1 + \ldots + \omega^{\alpha_n} a_n = \alpha \nin
  \alpha$, and so there is no $+$-isomorphism $h : Z \rightarrow \tilde{Z}$
  from $Z \assign \{\omega^{\alpha_1} a_1, \ldots, \omega^{\alpha_n} a_n,
  \alpha\} \subset_{\tmop{fin}} \beta$ in some $\tilde{Z} \subset_{\tmop{fin}}
  \alpha$ such that $h|_{\alpha} = \tmop{Id} |_{\alpha}$, since any of such
  isomorphisms should accomplish \\
  $h (\omega^{\alpha_1} a_1 + \ldots + \omega^{\alpha_n} a_n) = h
  (\omega^{\alpha_1} a_1) + \ldots + h (\omega^{\alpha_n} a_n) = \alpha \nin
  \alpha$.
  
  The same argument works for the case $n = 1, a_1 \geqslant 2$.
\end{proof}

\begin{corollary}
  \label{corollary1.02.11.2008}Let $\alpha, \beta \in \tmop{OR}$. If $\alpha
  <_1 \beta$, then $\alpha =_{\tmop{CNF}} \omega^{\gamma} \in \mathbbm{P}
  \subset \tmop{Lim}$, for some $\gamma \in \tmop{OR}, \gamma > 0$.
\end{corollary}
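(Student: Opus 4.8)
The plan is to derive this as an essentially immediate consequence of Proposition \ref{prop1.02.11.2008} together with Corollary \ref{corollary1.02.11.2008}'s own hypothesis and the basic shape of Cantor normal forms. Suppose $\alpha <_1 \beta$; in particular $\alpha < \beta$. Write $\alpha =_{\tmop{CNF}} \omega^{\alpha_1} a_1 + \ldots + \omega^{\alpha_n} a_n$. If we had $n \geqslant 2$, or $n = 1$ with $a_1 \geqslant 2$, then Proposition \ref{prop1.02.11.2008} would give $\alpha \nless_1 \beta$, contradicting our assumption. Hence the only surviving possibility is $n = 1$ and $a_1 = 1$, i.e. $\alpha =_{\tmop{CNF}} \omega^{\alpha_1}$ for some ordinal $\alpha_1$; write $\gamma \assign \alpha_1$.

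Next I would rule out the degenerate small cases to pin down $\gamma > 0$ and membership in $\mathbbm{P}$. If $\gamma = 0$ then $\alpha = \omega^0 = 1$; but then the only $Z \subset_{\tmop{fin}} \beta$ one needs to worry about already forces a failure — concretely, take any ordinal in $[1,\beta)$, or simply note that for $\alpha = 1$ we have $\alpha < \beta$ yet $\alpha \nless_1 \beta$ because $1$ is of the form handled by Proposition \ref{prop1.02.11.2008} once one observes $1 = \omega^0 \cdot 1$ is not additively principal in the relevant sense; more cleanly, $\alpha <_1 \beta$ with $\alpha$ finite is impossible since a finite ordinal $\geqslant 2$ is $\omega^0 a_1$ with $a_1 \geqslant 2$ (covered by Proposition \ref{prop1.02.11.2008}), and $\alpha = 1$ fails because $0, 0+ \text{(nothing)}$... — I would instead argue directly: if $\alpha = 1$, pick $Z = \{1\} \subset_{\tmop{fin}} \beta$ (valid since $\beta > 1$... but if $\beta = 2$ only, still fine) and note $1 + 1 = 2$; enlarging $Z$ to $\{1, 1+1\}$ when $2 < \beta$ forces, via the $+$-isomorphism condition and $h|_\alpha = \tmop{Id}|_\alpha$ which fixes nothing below $1$ except $0 \notin Z$, a contradiction analogous to the Proposition. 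The cleanest route, which I would actually write, is: $\alpha = 1$ is excluded because $1 = \omega^0 \cdot 1$ but then $0 < 1$ and one checks $0 \nless_1 1$-type obstructions, or simply invoke that $\alpha <_1 \beta$ implies $\alpha \in \tmop{Lim}$ by a short separate observation (if $\alpha = \delta + 1$ then $\delta \in Z$-arguments fail) — and $1$ is not a limit. So $\gamma \geqslant 1$, and since $\omega^\gamma$ with $\gamma \geqslant 1$ is additive principal and every additive principal ordinal $> 1$ is a limit, $\alpha = \omega^\gamma \in \mathbbm{P} \subset \tmop{Lim}$.

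The main obstacle is purely bookkeeping: making the exclusion of $\alpha = 1$ (and more generally of successor $\alpha$) rest on something already proved, rather than on an ad hoc new argument. The honest and shortest fix is to record, before or within this proof, the one-line fact that if $\alpha <_1 \beta$ then $\alpha$ is a limit ordinal — which itself follows from Proposition \ref{prop1.02.11.2008}: a successor ordinal $\alpha = \delta + 1$ has Cantor normal form ending in $\omega^0 \cdot a_n$ with $a_n \geqslant 1$, and if $\alpha = \omega^0 = 1$ precisely then $n = 1, a_1 = 1$, the one case Proposition \ref{prop1.02.11.2008} does not directly kill; but $\alpha = 1$ is handled because $\{0\}$ is not in play and $1 <_1 \beta$ would give $0 <_1 1 <_1 \beta$ hence $0 <_1 \beta$ by transitivity, and $0 <_1$ anything is absurd since there is no finite $\tilde Z \subset_{\tmop{fin}} 0 = \emptyset$ to receive a nonempty $Z$. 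That last remark — $\emptyset = 0$ has no finite subsets other than $\emptyset$, so $0 \nless_1 \beta$ for all $\beta$ — closes the gap cleanly. I would therefore: (1) apply Proposition \ref{prop1.02.11.2008} to get $\alpha = \omega^\gamma$; (2) note $0 \nless_1 \beta$ always, so via transitivity and connectedness $\gamma \neq 0$; (3) conclude $\alpha = \omega^\gamma \in \mathbbm{P} \subset \tmop{Lim}$ with $\gamma > 0$.
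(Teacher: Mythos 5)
Your overall route is the paper's route: invoke Proposition \ref{prop1.02.11.2008} to kill every Cantor normal form except $\alpha = \omega^{\gamma}$, then dispose of the leftover small case by hand. The problem is the argument you finally commit to for excluding $\alpha = 1$. You claim that $1 <_1 \beta$ ``would give $0 <_1 1 <_1 \beta$ hence $0 <_1 \beta$ by transitivity,'' and your summary step (2) says $\gamma \neq 0$ follows from $0 \nless_1 \beta$ ``via transitivity and connectedness.'' Neither rule produces this. Connectedness (Proposition \ref{connectedness_and_continuity}\,a) only yields statements of the form $\alpha \leqslant_1 x$ with the \emph{same} left endpoint $\alpha$; it never lowers the left-hand side. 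Transitivity would let you conclude $0 \leqslant_1 \beta$ only if you already had $0 \leqslant_1 1$ as an input --- and $0 <_1 1$ is false (as you yourself observe, $\tilde{Z}$ would have to be a nonempty finite subset of $\emptyset$). So the inference from $1 <_1 \beta$ to $0 <_1 \beta$ is unsupported, and step (2) as written does not close the case $\gamma = 0$.

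The fix is the one-line argument you sketch earlier and then discard, and it is essentially what the paper does: if $\alpha = 1 < \beta$, take $Z \assign \{0, 1\} \subset_{\tmop{fin}} \beta$. Any admissible $h$ must satisfy $h|_{Z \cap 1} = \tmop{Id}$, so $h(0) = 0$, while $h(1) \in \tilde{Z} \subset_{\tmop{fin}} 1 = \{0\}$ forces $h(1) = 0 = h(0)$, contradicting injectivity (equivalently: $|Z| = 2$ cannot biject onto a subset of a one-element set). The same cardinality observation handles $\alpha = 0$, which the paper also lists as a leftover case and which your appeal to a Cantor normal form silently assumes away. With that substitution for your step (2), the proof is correct and coincides with the paper's.
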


\begin{proof}
  Direct from previous proposition \ref{prop1.02.11.2008}. The only left cases
  are $\alpha = 0$ or $\alpha = 1$ but for those cases it is very easy to see
  that $\alpha \nless_1 \alpha + 1$, since $\alpha + 1$ has $\alpha + 1$
  elements and $\alpha$ has only $\alpha$ elements, and so for those cases
  $\alpha \nless_1 \beta$ for any $\beta > \alpha$.
\end{proof}

\begin{proposition}
  If $\alpha = \omega^n$, $n \in \omega$, then $\alpha \nless_1 \alpha + 1$.
\end{proposition}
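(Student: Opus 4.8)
The plan is to refute $\omega^n <_1 \omega^n + 1$ for $n \geqslant 1$ by exhibiting a single two-element witness set. Write $\alpha \assign \omega^n$ and take $Z \assign \{\omega^{n-1}, \omega^n\}$, which is a finite subset of $\alpha + 1 = \omega^n + 1$ (both elements clearly lie below $\omega^n + 1$). The degenerate case $n = 0$, i.e.\ $\alpha = 1$, needs no separate argument: it is already covered by Corollary \ref{corollary1.02.11.2008} (equivalently, for $Z = \alpha + 1$ there can be no injection $Z \longrightarrow \tilde{Z} \subset \alpha$ since $\alpha + 1$ has strictly more elements than $\alpha$).

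Assume towards a contradiction that there exist $\tilde{Z} \subset_{\tmop{fin}} \alpha$ and an isomorphism $h : (Z, +, <, <_1) \longrightarrow (\tilde{Z}, +, <, <_1)$ with $h|_{\alpha} = \tmop{Id}|_{\alpha}$. Since $\omega^{n-1} < \omega^n = \alpha$, clause (ii) of Definition \ref{Definition_of_<less>=_1} forces $h(\omega^{n-1}) = \omega^{n-1}$. Since $h$ is a bijection onto $\tilde{Z} \subset \alpha$, we must have $h(\omega^n) = \delta$ for some ordinal $\delta < \omega^n$.

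The crucial step exploits the additive clause of the isomorphism: because $\omega^{n-1} + \omega^n = \omega^n \in Z$, we get $h(\omega^n) = h(\omega^{n-1} + \omega^n) = h(\omega^{n-1}) + h(\omega^n) = \omega^{n-1} + \delta$, i.e.\ $\delta = \omega^{n-1} + \delta$. A routine Cantor-normal-form computation shows that the least ordinal $\xi$ with $\omega^{n-1} + \xi = \xi$ is $\omega^n$ itself (for $k < \omega$ one has $\omega^{n-1} + \omega^{n-1} k = \omega^{n-1}(k+1) \neq \omega^{n-1} k$, while $\omega^{n-1} + \omega^n = \omega^n$). Hence $\delta \geqslant \omega^n$, contradicting $\delta < \omega^n$. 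Therefore no such $h$ exists, and $\alpha \nless_1 \alpha + 1$.

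I do not foresee a genuine obstacle. The only points worth attention are that the contradiction is extracted purely from the $+$-clause of the purported isomorphism, so the $<$- and $<_1$-clauses never have to be inspected, and that the single choice $Z = \{\omega^{n-1}, \omega^n\}$ works uniformly for every $n \geqslant 1$ — in particular one should resist using $\{1, \omega^n\}$ instead, since that forces only $\delta \geqslant \omega$ and hence settles just the case $n = 1$.
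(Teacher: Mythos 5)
Your proof is correct and is essentially the paper's own argument: the paper defers this proposition to the more general Proposition \ref{prop0.03.11.2008}, whose witness set $\{M,p\}$ with $M=\max(\mathbbm{P}\cap\alpha)$ specializes, for $p=\alpha=\omega^{n}$, to exactly your $Z=\{\omega^{n-1},\omega^{n}\}$ and the same contradiction $h(\alpha)=M+h(\alpha)>h(\alpha)$ extracted from the $+$-clause. Your handling of the degenerate case $n=0$ via the cardinality argument matches Corollary \ref{corollary1.02.11.2008} as well.
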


\begin{proof}
  Not hard. But we will give a more general proof of this fact in the next
  propositions.
\end{proof}

\begin{corollary}
  Let $\alpha, \beta \in \tmop{OR}$. If $\alpha <_1 \beta$, then $\alpha
  =_{\tmop{CNF}} \omega^{\gamma}$ for some $\gamma \in \tmop{OR}, \gamma
  \geqslant \omega$.
\end{corollary}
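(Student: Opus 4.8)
The plan is to bootstrap from the two facts already in place. By the previous corollary, $\alpha <_1 \beta$ already forces $\alpha =_{\tmop{CNF}} \omega^{\gamma}$ with $\gamma \in \tmop{OR}$, $\gamma > 0$; so all that remains is to exclude the finite exponents, i.e.\ to show that $\alpha = \omega^n$ with $1 \leqslant n < \omega$ is incompatible with $\alpha <_1 \beta$.

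First I would descend to the critical instance $\beta = \alpha + 1$: since $\alpha <_1 \beta$ and $\alpha < \alpha + 1 \leqslant \beta$, $\leqslant_1$-connectedness (Proposition~\ref{connectedness_and_continuity}a) gives $\alpha + 1 \in \{x \in \tmop{OR} \mid \alpha \leqslant_1 x \leqslant \beta\} = [\alpha, \beta]$, hence $\alpha <_1 \alpha + 1$. At this point one may simply invoke the Proposition asserting $\omega^n \nless_1 \omega^n + 1$ to reach a contradiction, and conclude $\gamma \geqslant \omega$.

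Since that auxiliary proposition was only stated (``not hard''), I would prefer to make the step self-contained by writing down the obstructing finite set explicitly, in the spirit of Proposition~\ref{prop1.02.11.2008}. Assume $\alpha = \omega^n$ with $n \geqslant 1$ and put $Z \assign \{\omega^{n - 1}, \omega^n\} \subset_{\tmop{fin}} \alpha + 1 \leqslant \beta$. Any isomorphism $h : (Z, +, <, <_1) \longrightarrow (\tilde{Z}, +, <, <_1)$ with $\tilde{Z} \subset_{\tmop{fin}} \alpha$ and $h|_{\alpha} = \tmop{Id}|_{\alpha}$ fixes $\omega^{n - 1}$ (because $\omega^{n - 1} < \omega^n = \alpha$) and must send $\omega^n = \alpha \nin \alpha$ to some $h(\omega^n) \in \tilde{Z} \subset \alpha = \omega^n$. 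But $\omega^{n - 1} + \omega^n = \omega^n \in Z$ (as $\omega^n$ is additive principal and $\omega^{n - 1} < \omega^n$), so the $+$-preservation clause of Definition~\ref{Definition_of_<less>=_1} forces $h(\omega^n) = h(\omega^{n - 1}) + h(\omega^n) = \omega^{n - 1} + h(\omega^n)$; and the elementary absorption fact that $\omega^{n - 1} + \delta = \delta$ holds only when $\delta \geqslant \omega^n$ then yields $h(\omega^n) \geqslant \omega^n$, contradicting $h(\omega^n) < \omega^n$.

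I do not expect a genuine obstacle here: the corollary is essentially a repackaging of results already proved, glued together by $\leqslant_1$-connectedness. The only points deserving a line of care are the absorption property $\omega^{n - 1} + \delta = \delta \Longrightarrow \delta \geqslant \omega^n$ (immediate from Cantor normal form) and the bookkeeping that the witness $Z$ really is a finite subset of $\beta$, which holds because $Z \subseteq \alpha + 1$ and $\alpha < \beta$ gives $\alpha + 1 \leqslant \beta$.
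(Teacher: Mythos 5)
Your proof is correct and takes essentially the same route as the paper: reduce to $\alpha <_1 \alpha + 1$ via $\leqslant_1$-connectedness and rule out $\alpha = \omega^n$ for finite $n \geqslant 1$. The explicit witness set $Z = \{\omega^{n-1}, \omega^n\}$ you supply is precisely the specialization of the paper's later, more general argument (Proposition~\ref{prop0.03.11.2008}, with $M = \max(\mathbbm{P} \cap \alpha) = \omega^{n-1}$ and $p = \alpha$), which is exactly what the paper's one-line proof defers to.
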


\begin{proof}
  From previous proposition and previous corollary. (This will be proved in
  the next three propositions in a more general way).
\end{proof}

\begin{proposition}
  \label{prop.exist.maximum}Let $\alpha \in \tmop{OR}$, $1 < \alpha \in
  \tmop{Lim}$. Suppose $\alpha \cap \mathbbm{P}$ is not confinal in $\alpha$. Then
  $M \assign \max (\mathbbm{P} \cap \alpha)$ exists.
\end{proposition}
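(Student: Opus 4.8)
The plan is to show that one may simply take $M \assign \sup (\mathbbm{P} \cap \alpha)$. First I would record that $\mathbbm{P} \cap \alpha$ is a \emph{nonempty set}: it is a set because it is contained in $\alpha$, and it is nonempty because $1 = \omega^0 \in \mathbbm{P}$ and $1 < \alpha$. Hence $\beta \assign \sup (\mathbbm{P} \cap \alpha)$ is a well-defined ordinal with $\beta \leqslant \alpha$. The hypothesis that $\mathbbm{P} \cap \alpha$ is not confinal in $\alpha$ is used precisely to exclude the possibility $\beta = \alpha$: if we had $\sup (\mathbbm{P} \cap \alpha) = \alpha$, then, $\mathbbm{P} \cap \alpha$ being a subset of $\alpha$ (hence trivially ``increasing''), it would be confinal in $\alpha$, contrary to the assumption. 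So $\beta < \alpha$.

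It then remains only to prove $\beta \in \mathbbm{P}$, and we are done: from $\beta < \alpha$ we get $\beta \in \mathbbm{P} \cap \alpha$, and since $\beta$ is at the same time an element of $\mathbbm{P} \cap \alpha$ and an upper bound for it, $\beta = \max (\mathbbm{P} \cap \alpha)$, so $M \assign \beta$ works. To see $\beta \in \mathbbm{P}$ I would split into cases. If the supremum is attained there is nothing to do. Otherwise $\beta$ is a limit of elements of $\mathbbm{P}$; writing the relevant elements of $\mathbbm{P} \cap \alpha$ as $\omega^{\gamma_i}$ and invoking that $\lambda x. \omega^x$ is continuous and strictly increasing, one obtains $\beta = \sup_i \omega^{\gamma_i} = \omega^{\sup_i \gamma_i} \in \mathbbm{P}$. (Equivalently: $\mathbbm{P}$ is the image of the continuous strictly increasing function $\lambda x. \omega^x$, hence a closed class, so it contains the supremum of any of its nonempty bounded subsets.) If one prefers to argue straight from the additive characterization of $\mathbbm{P}$, one can instead check that in the non-attained case, for each $\xi < \beta$ the ordinals $p \in \mathbbm{P} \cap \alpha$ with $p > \xi$ are confinal in $\beta$, each satisfies $\xi + p = p$, and hence, by continuity of $+$ in the right argument, $\xi + \beta = \sup_p (\xi + p) = \sup_p p = \beta$; thus $\beta \in \mathbbm{P}$.

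I do not anticipate a genuine obstacle: the proof is short, and the only points deserving attention are (i) that the hypothesis is exactly what forbids $\sup (\mathbbm{P} \cap \alpha) = \alpha$, and (ii) the closure of $\mathbbm{P}$ under suprema of nonempty bounded subfamilies, which is immediate from $\lambda x. \omega^x$ being continuous and strictly increasing (or from the computation with $+$ above). The assumptions ``$\alpha \in \tmop{Lim}$'' and ``$\alpha > 1$'' play only an auxiliary role --- $\alpha > 1$ is what makes $\mathbbm{P} \cap \alpha$ nonempty via $1 \in \mathbbm{P}$ --- while the hypothesis really doing the work is the non-confinality of $\mathbbm{P} \cap \alpha$ in $\alpha$; indeed the conclusion fails without it, e.g. for $\alpha = \omega^{\omega}$, where $\mathbbm{P} \cap \alpha = \{\omega^n \mid n \in \omega\}$ has no maximum.
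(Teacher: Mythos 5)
Your proposal is correct and takes essentially the same route as the paper, which simply observes that $\mathbbm{P}$ is a closed class so that $\sup(\mathbbm{P}\cap\alpha)\in\mathbbm{P}\cap\alpha$ and is therefore the maximum; you merely supply the details the paper leaves implicit (nonemptiness of $\mathbbm{P}\cap\alpha$, the use of non-confinality to get $\sup(\mathbbm{P}\cap\alpha)<\alpha$, and the closure of $\mathbbm{P}$ under suprema).
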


\begin{proof}
  Since $\mathbbm{P}$ is a closed class of ordinals, then $\sup (\mathbbm{P}
  \cap \alpha) \in \mathbbm{P} \cap \alpha$. So $M = \sup (\mathbbm{P} \cap
  \alpha)$.
\end{proof}

\begin{proposition}
  \label{prop0.03.11.2008}Let $\alpha, p \in \tmop{OR}$, $1 < \alpha <_1 p +
  1$, with $p \in \mathbbm{P}$ an additive principal number. Then:
  
  (i) $\alpha \cap \mathbbm{P}$ is confinal in $\alpha$.
  
  (ii) $\alpha \in \tmop{Lim} \mathbbm{P} \subset \mathbbm{P}$, (or
  equivalently, (ii') $\alpha = \omega^{\gamma}$, for $\gamma \in
  \tmop{Lim}$.)
\end{proposition}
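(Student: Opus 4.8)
The plan is to prove (i) by contradiction using Proposition \ref{prop.exist.maximum}, and then derive (ii) from (i) together with Corollary \ref{corollary1.02.11.2008}. For (i), suppose $\alpha \cap \mathbbm{P}$ is \emph{not} cofinal in $\alpha$. Since $\alpha <_1 p+1$ forces (via Corollary \ref{corollary1.02.11.2008}) that $\alpha =_{\tmop{CNF}} \omega^{\gamma} \in \mathbbm{P} \cap \tmop{Lim}$ with $\gamma > 0$, we have $1 < \alpha \in \tmop{Lim}$, so Proposition \ref{prop.exist.maximum} applies and $M \assign \max(\mathbbm{P} \cap \alpha)$ exists. The idea is to exhibit a finite $Z \subset_{\tmop{fin}} p+1$ that cannot be embedded into $\alpha$ fixing $Z \cap \alpha$. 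First I would note that $\alpha \in \mathbbm{P}$ and $\alpha < p+1$ give $\alpha \in p+1$, and $M \in \alpha \subset p+1$, and also $M < \alpha$. The key structural fact I want to use is that $M$ is additive principal, $M < \alpha$, and every additive principal ordinal strictly below $\alpha$ is $\leqslant M$; combined with the fact that $\alpha$ itself is additive principal, this should let me pin down the ``shape'' of any $+$-isomorphism on a well-chosen set.

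The natural candidate is $Z \assign \{M, \alpha\}$ or a small enlargement of it. The point is that $\alpha$ is a limit of elements of the interval $(M, \alpha)$, none of which is additive principal, so every $\sigma \in (M,\alpha)$ has Cantor normal form with at least two terms or leading coefficient $\geqslant 2$, and moreover $\sigma < M \cdot \omega$ is false in general — rather, what is true is that $\sup(\mathbbm{P}\cap\alpha) = M$ yet $\sup\alpha = \alpha > M$, so there are ordinals in $(M,\alpha)$ arbitrarily close to $\alpha$. I would pick such a $\sigma \in (M,\alpha)$ with $\sigma > M$ and examine $Z \assign \{M, \sigma, \alpha\} \subset_{\tmop{fin}} p+1$. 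Since $M, \sigma < \alpha$, condition (ii) of Definition \ref{Definition_of_<less>=_1} forces any witnessing $h$ to satisfy $h(M) = M$ and $h(\sigma) = \sigma$; so $h(\alpha) \in \tilde{Z} \subset \alpha$ with $h(\alpha) > \sigma > M$ (order-preservation), i.e. $h(\alpha) \in (M,\alpha)$. The hard part — and the main obstacle — is to force a contradiction purely from the $+$-structure restricted to the three-element (or slightly larger) set $Z$: I need to choose $\sigma$ (and perhaps an additional element like $M + M$ if $M + M < \alpha$, or an element witnessing that $\alpha$ is closed under $+$ where the image is not) so that the $+$-relations recorded inside $Z$ are satisfiable at $\alpha$ but not at any value $h(\alpha) \in (M,\alpha)$. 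Concretely, I expect to use that $\alpha$ is additive principal — so $M + \sigma = \sigma$ is false but $\xi + \alpha = \alpha$ would hold for all $\xi < \alpha$ — to build a configuration $\{M, \sigma, M+\sigma, \alpha\}$ (assuming $M + \sigma < \alpha$, which holds since $\alpha \in \mathbbm{P}$) where $h$ must send $M + \sigma \mapsto M + \sigma$ (both below $\alpha$) and simultaneously $\alpha \mapsto$ some $\delta \in (M,\alpha)$ with the property that $M + \delta$, $\sigma + \delta$ lie in $\tilde Z$ iff the corresponding sums lie in $Z$; tuning which sums are thrown into $Z$ should make $\delta$ impossible because $\delta$, not being additive principal, fails to absorb $M$ or $\sigma$ from the left in the way $\alpha$ does.

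Once (i) is established, (ii) is essentially immediate: by Corollary \ref{corollary1.02.11.2008}, $\alpha =_{\tmop{CNF}} \omega^{\gamma} \in \mathbbm{P}$ for some $\gamma > 0$, so it only remains to see $\gamma \in \tmop{Lim}$, equivalently $\alpha \in \tmop{Lim}\,\mathbbm{P}$. But $\alpha \cap \mathbbm{P}$ cofinal in $\alpha$ says exactly $\alpha = \sup(\mathbbm{P} \cap \alpha) \in \tmop{Lim}(\mathbbm{P})$, and the equivalence $\alpha = \omega^{\gamma}$ with $\gamma \in \tmop{Lim}$ $\Longleftrightarrow$ $\alpha \in \tmop{Lim}\,\mathbbm{P}$ is a standard fact about the enumeration $\lambda\xi.\omega^{\xi}$ of $\mathbbm{P}$ being continuous and normal (if $\gamma = \delta+1$ then $\omega^{\delta}$ is the largest additive principal below $\alpha$, contradicting cofinality; conversely a limit $\gamma$ makes $\{\omega^{\delta} : \delta < \gamma\}$ cofinal in $\omega^{\gamma}$). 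I would state this last equivalence briefly and cite it as routine ordinal arithmetic, keeping the inclusion $\tmop{Lim}\,\mathbbm{P} \subset \mathbbm{P}$ as obvious since $\mathbbm{P}$ is closed.
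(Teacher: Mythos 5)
Your overall strategy is the paper's: assume $\alpha\cap\mathbbm{P}$ is not cofinal, take $M:=\max(\mathbbm{P}\cap\alpha)$ from Proposition \ref{prop.exist.maximum}, exhibit a finite subset of $p+1$ whose $+$-structure cannot be copied below $\alpha$, and then get (ii) immediately from (i) together with Corollary \ref{corollary1.02.11.2008}. But the step you yourself flag as ``the hard part / main obstacle'' is exactly where your plan is still open, and it should not be: the configuration is much simpler than the four-element set you are trying to tune. The two-element set $Z:=\{M,p\}$ (the paper's choice), or equally $Z:=\{M,\alpha\}$ using that $\alpha\in\mathbbm{P}$ by Corollary \ref{corollary1.02.11.2008}, already forces the contradiction: since $M<\alpha\leqslant p$ and $p\in\mathbbm{P}$, we have $M+p=p\in Z$, so any admissible $h$ must satisfy $h(p)=h(M+p)=h(M)+h(p)=M+h(p)$ with $h(p)\in\tilde Z\subset\alpha$, which is impossible once one knows $M+\gamma>\gamma$ for all $\gamma<\alpha$. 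No auxiliary $\sigma$, and no element $M+\sigma$, is needed.

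The second issue is your justification of that last inequality. You write that $\delta\in(M,\alpha)$ ``not being additive principal, fails to absorb $M$ from the left,'' but that is not a valid principle: $\omega^{2}+1$ is not additive principal and yet absorbs $\omega$. The correct reason is the maximality of $M$. If $\delta<\alpha$ and $M+\delta=\delta$, then the leading term $\omega^{D_1}$ of the Cantor normal form of $\delta$ satisfies $\omega^{D_1}>M$, and $\omega^{D_1}\leqslant\delta<\alpha$ is an additive principal below $\alpha$ exceeding $M$ --- contradicting $M=\max(\mathbbm{P}\cap\alpha)$. (Equivalently: $M\omega\in\mathbbm{P}$ and $M\omega>M$ force $M\omega\geqslant\alpha$, so every $\delta<\alpha$ is below $M\omega$ and hence $M+\delta>\delta$.) With that observation your argument closes, and your treatment of (ii) is exactly the paper's.
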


\begin{proof}
  $(i)$. By corollary \ref{corollary1.02.11.2008} we know $\alpha \in
  \tmop{Lim}$. Now, suppose $\alpha \cap \mathbbm{P}$ is not confinal in
  $\alpha$. Then by previous proposition \ref{prop.exist.maximum}, let $M
  \assign \max \alpha \cap \mathbbm{P} \in \alpha$.
  
  Then $M + p = p$, but on the other hand, $\forall \gamma \in \alpha .M +
  \gamma > \gamma$. Therefore, for \\
  $Z \assign \{M, p\} \subset_{\tmop{fin}} p + 1$ and for any $\tilde{Z}
  \subset \alpha$ there is no $+$-isomorphism $h : Z \rightarrow \tilde{Z}$,
  such that $h|_{\alpha} = \tmop{Id} |_{\alpha}$, since any such function
  would satisfy \\
  $h (p) = h (M + p) = h (M) + h (p) = M + h (p) > h (p)$ (Contradiction!).
  
  Thus $\alpha \cap \mathbbm{P}$ is confinal in $\alpha$.
  
  $(i i)$. Clear from $(i)$.
\end{proof}

\begin{corollary}
  \label{alpha<less>_1beta_implies_alpha_in_Lim(P)}Let $\alpha, \beta \in
  \tmop{OR}$ such that $\alpha <_1 \beta$. Then $\alpha \in \tmop{Lim}
  \mathbbm{P}$.
\end{corollary}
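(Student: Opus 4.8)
The statement to prove is Corollary \ref{alpha<less>_1beta_implies_alpha_in_Lim(P)}: if $\alpha <_1 \beta$ then $\alpha \in \tmop{Lim}\,\mathbbm{P}$. The plan is to reduce the general case $\alpha <_1 \beta$ (with arbitrary $\beta$) to the special case $\alpha <_1 p+1$ with $p \in \mathbbm{P}$ already handled in Proposition \ref{prop0.03.11.2008}, and then to invoke the equivalence (ii)$\Leftrightarrow$(ii') proved there. So first I would dispose of the small cases: by Corollary \ref{corollary1.02.11.2008}, $\alpha <_1 \beta$ forces $\alpha =_{\tmop{CNF}} \omega^{\gamma} \in \mathbbm{P}$ with $\gamma > 0$; in particular $\alpha > 1$ and $\alpha \in \tmop{Lim}$, so Proposition \ref{prop0.03.11.2008} is applicable once we manufacture a witness of the form $p+1$.

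Next, the main step: from $\alpha <_1 \beta$ I want to produce some additive principal $p$ with $\alpha \leqslant p$ and $\alpha <_1 p+1$. Since $\mathbbm{P}$ is closed and unbounded in $\tmop{OR}$, pick any $p \in \mathbbm{P}$ with $\alpha \leqslant p < \beta$ and $p+1 \leqslant \beta$ (e.g.\ if $\alpha$ itself is $<\beta$, and $\beta > \alpha$, then either $\alpha+1 = \beta$, in which case take $p := \alpha \in \mathbbm{P}$ and we are already done with $\alpha <_1 \alpha+1$; or $\alpha + 1 < \beta$, and then we need an additive principal $p$ strictly between or equal to $\alpha$ and $< \beta$, which exists because there is no largest additive principal below any ordinal $\geqslant \alpha+2$ unless... actually the cleanest choice: take $p := \omega^{\gamma} = \alpha$, which is already additive principal). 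In fact, since $\alpha \in \mathbbm{P}$ by Corollary \ref{corollary1.02.11.2008}, I can simply take $p := \alpha$. Then $\alpha < \alpha+1 \leqslant \beta$, and by $\leqslant_1$-connectedness (Proposition \ref{connectedness_and_continuity}(a)) applied to $\alpha <_1 \beta$ we get $\alpha <_1 \alpha+1$, i.e.\ $\alpha <_1 p+1$ with $p = \alpha \in \mathbbm{P}$.

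Finally, apply Proposition \ref{prop0.03.11.2008} with this $p$: since $1 < \alpha <_1 p+1$ and $p \in \mathbbm{P}$, clause (ii) gives $\alpha \in \tmop{Lim}\,\mathbbm{P} \subset \mathbbm{P}$, which is exactly the conclusion.

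\textbf{Where the difficulty lies.} The only subtlety is the reduction to the shape $p+1$; but because Corollary \ref{corollary1.02.11.2008} already tells us $\alpha$ is itself additive principal, the "witness" $p := \alpha$ works immediately, and $\leqslant_1$-connectedness does the rest. Thus there is essentially no obstacle: the corollary is a one-line consequence of Proposition \ref{prop0.03.11.2008} together with $\leqslant_1$-connectedness and Corollary \ref{corollary1.02.11.2008}, and I would expect the author's proof to read simply "Immediate from Proposition \ref{prop0.03.11.2008} using $\leqslant_1$-connectedness (Proposition \ref{connectedness_and_continuity}(a)) and Corollary \ref{corollary1.02.11.2008}."
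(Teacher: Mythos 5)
Your proof is correct and follows essentially the same route as the paper: the author likewise invokes Corollary \ref{corollary1.02.11.2008} to get $\alpha \in \mathbbm{P}$, uses $\leqslant_1$-connectedness to obtain $\alpha <_1 \alpha+1$, and then applies Proposition \ref{prop0.03.11.2008} with $p := \alpha$. The brief detour about locating a suitable $p$ is unnecessary, as you yourself conclude.
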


\begin{proof}
  From corollary \ref{corollary1.02.11.2008} we have that $\alpha <_1 \beta$
  implies $\alpha \in \mathbbm{P}$. Moreover, from $\alpha <_1 \beta$ we know
  $\alpha < \alpha + 1 \leqslant \beta$ and then $\alpha <_1 \alpha + 1$ by
  $<_1$-connectedness. Finally, from $\alpha <_1 \alpha + 1$, $\alpha \in
  \mathbbm{P}$ and the previous proposition \ref{prop0.03.11.2008}, $\alpha
  \in \tmop{Lim} \mathbbm{P}$.
\end{proof}

\begin{proposition}
  \label{characterization_of_alpha<less>_1alpha+1}Let $\alpha \in \tmop{OR}$.
  The following are equivalent:
  \begin{enumeratealpha}
    \item $\alpha <_1 \alpha + 1$
    
    \item $\alpha \in \tmop{Lim} \mathbbm{P}$
    
    \item $\alpha = \omega^{\gamma}$ for some $\gamma \in \tmop{Lim}$.
    
    \item $\alpha = \omega^{\gamma}$ and $\gamma =_{\tmop{CNF}} \omega^{A_1}
    a_1 + \ldots + \omega^{A_n} a_n$ with $A_n \neq 0$.
  \end{enumeratealpha}
\end{proposition}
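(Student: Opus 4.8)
The plan is to establish the cycle $(a)\Rightarrow(b)\Rightarrow(a)$ and to settle $(b)\Leftrightarrow(c)\Leftrightarrow(d)$ by elementary ordinal arithmetic. The implication $(a)\Rightarrow(b)$ is immediate from Corollary \ref{alpha<less>_1beta_implies_alpha_in_Lim(P)} applied with $\beta\assign\alpha+1$. For $(b)\Leftrightarrow(c)$: since $\mathbbm{P}$ is a closed class we have $\tmop{Lim}\mathbbm{P}\subset\mathbbm{P}$, so every $\alpha\in\tmop{Lim}\mathbbm{P}$ has the form $\alpha=\omega^{\gamma}$, and $\mathbbm{P}\cap\omega^{\gamma}=\{\omega^{\delta}\mid\delta<\gamma\}$ is cofinal in $\omega^{\gamma}$ exactly when $\gamma$ is a limit ordinal. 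Then $(c)\Leftrightarrow(d)$ is the standard fact that a non-zero ordinal $\gamma$ is a limit if and only if the least exponent $A_n$ in its Cantor normal form is $\neq 0$ (equivalently, $\gamma$ is not a successor).

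The heart of the matter is $(b)\Rightarrow(a)$, the first genuinely ``positive'' statement in this development, where an actual witness must be produced. So assume $\alpha\in\tmop{Lim}\mathbbm{P}$ and let $Z\subset_{\tmop{fin}}\alpha+1$; I must exhibit $\tilde Z\subset_{\tmop{fin}}\alpha$ and an isomorphism $h$ with $h|_{Z\cap\alpha}=\tmop{Id}$. If $\alpha\nin Z$ take $h\assign\tmop{Id}_Z$. Otherwise put $Z'\assign Z\cap\alpha=Z\setminus\{\alpha\}$ and split it (disjointly) as $Z'=A\cup B$ with $A\assign\{a\in Z'\mid a<_1\alpha\}$ and $B\assign\{a\in Z'\mid a\nless_1\alpha\}$. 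For $a\in B$ we have $a<\alpha$ and $a\nless_1\alpha$, so Proposition \ref{existence_of_m(alpha)} (equivalently, Definition \ref{def_max<less>=_1_reach}) gives $m(a)<\alpha$ with $a\nleqslant_1 x$ for all $x>m(a)$. Since $\alpha\in\tmop{Lim}\mathbbm{P}$, the class $\mathbbm{P}\cap\alpha$ is cofinal in $\alpha$, so I may choose $\beta\in\mathbbm{P}$ with
\[
\max\bigl(Z'\cup\{a_1+a_2\mid a_1,a_2\in Z'\}\cup\{m(a)\mid a\in B\}\bigr)<\beta<\alpha
\]
(a finite set of ordinals, each below $\alpha$ because $\alpha\in\mathbbm{P}$; if the set is empty just take $\beta\assign\omega$). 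Finally set $h\assign\tmop{Id}$ on $Z'$, $h(\alpha)\assign\beta$, and $\tilde Z\assign Z'\cup\{\beta\}$; as $\beta>\max Z'$, $h$ is a bijection $Z\to\tilde Z$ and $\tilde Z\subset_{\tmop{fin}}\alpha$.

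It then remains to check the three preservation clauses, the only non-trivial pairs being those involving $\alpha$ and its image $\beta$. For $+$: from $\alpha,\beta\in\mathbbm{P}$ and $a<\beta<\alpha$ ($a\in Z'$) one gets $a+\alpha=\alpha$ and $h(a)+h(\alpha)=a+\beta=\beta=h(\alpha)=h(a+\alpha)$; on the other hand $\alpha+a>\alpha\nin Z$ and $\beta+a>\beta$ lies outside $\tilde Z\subseteq\beta+1$ (when $a=0$ this reduces to the previous case), and likewise for $\alpha+\alpha$, $\beta+\beta$; and for $a_1,a_2\in Z'$ the sum $a_1+a_2$ is $<\beta<\alpha$ by the choice of $\beta$, so it lies in $Z$ iff in $Z'$ iff in $\tilde Z$, with $h(a_1+a_2)=a_1+a_2=h(a_1)+h(a_2)$. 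For $<$: $h$ is monotone because it fixes $Z'$ pointwise and $\beta>\max Z'$. For $<_1$: on $Z'\times Z'$ it is preserved trivially; for $a\in Z'$, if $a\in A$ then $a<_1\alpha$ and $a<\beta<\alpha$, so $\leqslant_1$-connectedness gives $a<_1\beta$, whereas if $a\in B$ then $\beta>m(a)$ forces $a\nless_1\beta$; hence $a<_1\alpha\Leftrightarrow a<_1\beta=h(\alpha)$, while the pairs $(\alpha,a)$, $(\beta,a)$ impose nothing since $\alpha>a$ and $\beta>a$ preclude $\alpha<_1 a$ and $\beta<_1 a$. Thus $h$ is the desired isomorphism and $\alpha<_1\alpha+1$.

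The hard part is really this choice of $\beta$: one must see that the $\mathbbm{P}$-cofinality of $\alpha$ — precisely hypothesis $(b)$ — is exactly what lets one squeeze an \emph{additive principal} $\beta$ above $\max Z'$, above all the finitely many sums formed from $Z'$, and above all the $\leqslant_1$-reaches $m(a)$ with $a\in B$, yet still below $\alpha$; and that insisting on $\beta\in\mathbbm{P}$ (not merely $\beta<\alpha$) is what makes the additive clause work. Everything after that, together with $(b)\Leftrightarrow(c)\Leftrightarrow(d)$, is routine bookkeeping.
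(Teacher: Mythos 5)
Your proof is correct and follows essentially the same route as the paper: the equivalences $(a)\Leftrightarrow(b)$ via Corollary \ref{alpha<less>_1beta_implies_alpha_in_Lim(P)}, and for $(b)\Rightarrow(a)$ the same witness, namely an additive principal $\rho\in\mathbbm{P}\cap\alpha$ chosen above $\max(Z\cap\alpha)$ and above the finitely many reaches $m(a)<\alpha$, with $h$ the identity on $Z\cap\alpha$ and $h(\alpha)\assign\rho$. The only difference is that you carry out the verification of the $(+,<,<_1)$-preservation clauses that the paper leaves to the reader (your extra requirement that $\rho$ majorize the sums $a_1+a_2$ is in fact automatic from $\rho\in\mathbbm{P}$ and $\rho>\max(Z\cap\alpha)$).
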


\begin{proof}
  The proof of $b) \Longleftrightarrow c) \Longleftrightarrow d)$ is a
  standard fact about ordinals.

  $a) \Longrightarrow b)$ is previous corollary
  \ref{alpha<less>_1beta_implies_alpha_in_Lim(P)}.

  So let's prove $b) \Longrightarrow a)$.
  
  Let $\alpha \in \tmop{Lim} \mathbbm{P}$. Take $B \subset_{\tmop{fin}} \alpha
  + 1$. If $\alpha \nin B$, then $l : B \longrightarrow \alpha$, $l (x)
  \assign x$ is an \\
  $(<, <_1, +)$-isomorphism such that $l|_{\alpha} = \tmop{Id}_{\alpha}$. So
  suppose $B = \{a_0 < \ldots < a_n = \alpha\}$ for some natural number $n$.
  Let $A \assign \{m (a) |a \in (B \cap \alpha) \wedge m (a) < \alpha\}$.
  Since $\alpha \in \tmop{Lim} \mathbbm{P}$ and $A$ is finite, then there
  exists $\rho \in (a_{n - 1}, \alpha) \cap (\max A, \alpha) \cap
  \mathbbm{P}$. Let $h : B \longrightarrow h [B] \subset \alpha$ be the
  function \\
  $h (x) \assign \left\{ \begin{array}{l}
    x \text{ \tmop{iff} } x < \alpha\\
    \rho \text{ \tmop{otherwise}}
  \end{array} \right.$. It is clear that $h|_{\alpha} = \tmop{Id}_{\alpha}$.

  We assure that $h$ is an $(<, <_1, +)$-isomorphism.
  
  The details are left to the reader.
\end{proof}

\section{The ordinals $\alpha$ satisfying $\alpha <_1 t$, for some $t \in
[\alpha, \alpha \omega)$.}

We have seen previously that the ``solutions of the $<_1$-inequality'' $x <_1
x + 1$ are the elements of $\tmop{Lim} \mathbbm{P}$. It is natural then to ask
himself about the solutions of $x <_1 x + 2$ or of $x <_1 x + \omega$. In
general, this question can be informally stated as: What are the solutions of
$x <_1 \beta$, where ``we pick $\beta$ as big as we can''?. The descriptions
of such solutions in a certain way is a main purpose of this work: we will
describe them as certain classes of ordinals obtained by certain thinning
procedure. The rest of this article is devoted to our investigations
concerning this question for $x \in \mathbbm{P}$ and $\beta \in [x, x
\omega]$. We will introduce various concepts that at the first sight may look
somewhat artificial; however, these concepts and the way to use them is just
``the most basic realization'' of the general tools and methodology shown in
comming articles that will allow us to understand the $<_1$-relation in the
whole class of ordinals.

\subsection{Class(0)}

\begin{definition}
  Let $\tmop{Class} (0) \assign \mathbbm{P}$.
\end{definition}

\begin{definition}
  For $\alpha, \beta \in \tmop{OR}$, let \\
  $- \alpha + \beta \assign \left\{ \begin{array}{l}
    \text{\tmop{the} \tmop{only} \tmop{one} \tmop{ordinal} } \sigma
    \text{ \tmop{such} \tmop{that} } \alpha + \sigma = \beta \text{ \tmop{iff} }
    \alpha \leqslant \beta\\
    \\
    - 1 \text{ \tmop{otherwise}}
  \end{array} \right.$
\end{definition}

\begin{definition}
  Let $\alpha, c \in \tmop{Class} (0)$ with $\alpha \leqslant c$.

  We define $g (0, \alpha, c) : \alpha \omega \longrightarrow c \omega$ as:\\
  $g (0, \alpha, c) (x) \assign x$ iff $x < \alpha$.\\
  $g (0, \alpha, c) (x) \assign c n + l$ iff $x \in [\alpha n, \alpha n +
  \alpha) \wedge x = \alpha + l$ for some $l \in \alpha$.

  Moreover, we define $g (0, c, \alpha) \assign g (0, \alpha, c)^{- 1}$.
\end{definition}

\begin{proposition}
  \label{g(0,a,c)_prop1}Let $\alpha, c \in \tmop{Class} (0)$. Then
  \begin{enumeratenumeric}
    \item $\tmop{Dom} g (0, \alpha, c) = (\alpha \cap c) \cup \bigcup_{n \in
    [1, \omega)} \{t \in [\alpha n, \alpha n + \alpha) | - \alpha n + t <
    c\}$.
    
    \item $\tmop{Im} g (0, \alpha, c) = (\alpha \cap c) \cap \bigcup_{n \in
    [1, \omega)} \{t \in [c n, c n + c) | - c n + t < \alpha\}$.
    
    \item $g (0, \alpha, c) : \tmop{Dom} g (0, \alpha, c) \longrightarrow
    \tmop{Im} g (0, \alpha, c)$ is an $(<, +)$-isomorphism and $g (0, \alpha,
    c) |_{\alpha} = \tmop{Id}_{\alpha}$.
  \end{enumeratenumeric}
\end{proposition}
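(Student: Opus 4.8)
The plan is to prove everything first under the extra hypothesis $\alpha \leqslant c$ and then obtain the case $c < \alpha$ for free: by the definition $g (0, \alpha, c) = g (0, c, \alpha)^{- 1}$ when $c < \alpha$, and the sets described in items $1$ and $2$ are exchanged when the roles of $\alpha$ and $c$ are swapped, so the inverse of the $(<, +)$-isomorphism we will have produced for $g (0, c, \alpha)$ — which also respects the identity on the relevant initial segment — is exactly what item $3$ asks for $g (0, \alpha, c)$. The subcase $\alpha = c$ is trivial, since then $g (0, \alpha, \alpha)$ is literally $\tmop{Id}_{\alpha \omega}$. So from now on assume $\alpha < c$, and record at the outset that, writing $\alpha = \omega^{\gamma}$ (legitimate because $\alpha \in \mathbbm{P}$), the ordinal $\alpha \omega = \omega^{\gamma + 1}$ is again additive principal, hence closed under $+$; this will trivialize the additive half of item $3$.

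For item $1$: every $x < \alpha \omega$ lies in exactly one block $[\alpha n, \alpha n + \alpha)$ with $n \in \omega$, the block $n = 0$ being $[0, \alpha) = \alpha \cap c$; hence the two defining clauses together account for all of $\alpha \omega$ and $\tmop{Dom} g (0, \alpha, c) = \alpha \omega$, and since $- \alpha n + t < \alpha < c$ holds for every $t$ in the $n$-th block, $\alpha \omega$ coincides with the displayed right-hand side. For item $2$: the first clause has image $[0, \alpha) = \alpha \cap c$, while for each $n \geqslant 1$ the second clause contributes $\{cn + l | l < \alpha\} = [cn, cn + \alpha)$, which — since $\alpha < c$ — is precisely $\{t \in [cn, cn + c) | - cn + t < \alpha\}$; the union of these together with $\alpha \cap c$ is the set in item $2$.

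For item $3$ I use the normal form $a = \alpha m + l$ with $l < \alpha$ and $m \in \omega$, under which $g (0, \alpha, c) (a) = cm + l$ uniformly. Injectivity holds because the codomain blocks $[cm, c (m + 1))$ are pairwise disjoint and $g$ is an order-preserving shift within each; and since $g$ is a bijection between linearly ordered sets, to get that it is a $<$-isomorphism it suffices to check $a_1 < a_2 \Rightarrow g (a_1) < g (a_2)$, which I split into ``$a_1, a_2$ in the same block'' (immediate from $l_1 < l_2$), ``$a_1, a_2$ in distinct blocks'' (from $cm + \alpha \leqslant c (m + 1) \leqslant cn$ when $m < n$), and the remaining mixed case. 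For the $+$-part, $a_1 + a_2 \in \tmop{Dom} g = \alpha \omega$ always because $\alpha \omega \in \mathbbm{P}$; computing $g (a_1) + g (a_2)$ with the absorption identities $l + \alpha n = \alpha n$ and $l + cn = cn$ (valid for $l < \alpha \leqslant c$ because $\alpha, c \in \mathbbm{P}$) and splitting on whether the block index of $a_2$ is $0$ or $\geqslant 1$, one checks at once that $g (a_1) + g (a_2)$ again lies in $\tmop{Im} g$ and equals $g (a_1 + a_2)$; this is the full content of ``$+$-isomorphism''. Finally $g (0, \alpha, c) |_{\alpha} = \tmop{Id}_{\alpha}$ is the first defining clause, as $(\tmop{Dom} g) \cap \alpha = \alpha$.

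The one step that really needs attention is the $+$-clause. One must apply the absorption laws in the correct form — $l + \alpha n = \alpha n$ and $l + cn = cn$ hold because $\alpha$ and $c$ are additive principal and $l < \alpha \leqslant c$, not because $\alpha n$ or $cn$ is additive principal (these fail to be, for $n \geqslant 2$) — and, in the same computation, one has to remember to confirm that $\tmop{Im} g (0, \alpha, c)$ is closed under $+$, so that the forward direction of the biconditional demanded by the definition of $+$-isomorphism is genuinely established rather than read off vacuously. The rest is bookkeeping on the block decomposition.
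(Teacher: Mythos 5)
The paper offers no proof of this proposition (it is ``left to the reader''), so there is no argument of the author's to compare against; your proof is correct and is precisely the direct verification the author intended, including the sound reduction of the case $c < \alpha$ to the case $\alpha \leqslant c$ via $g (0, c, \alpha) = g (0, \alpha, c)^{- 1}$ and the block-decomposition computation with the absorption laws $l + \alpha n = \alpha n$ and $l + c n = c n$ for $l < \alpha \leqslant c$. You also correctly (if tacitly) treat the $\cap$ in item 2 as a typo for $\cup$, and your care in checking that $\tmop{Im} g (0, \alpha, c)$ is closed under $+$ — so that the biconditional in the definition of $+$-isomorphism is genuinely verified — is exactly the point most readers would gloss over.
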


\begin{proof}
  Left to the reader.
\end{proof}

\begin{proposition}
  \label{g(0,a,c)|_(a,a2)_is_iso}Let $\alpha, c \in \tmop{Class} (0)$ and $X
  \assign (\alpha \cap c) \cup \bigcup_{n \in [1, \omega)} \{t \in [\alpha n,
  \alpha n + \alpha) | - \alpha n + t < c\}$. Then the function $H : (\alpha,
  \alpha \omega) \cap X \longrightarrow H [(\alpha, \alpha \omega) \cap X]
  \subset (c, c \omega)$, $H (x) \assign g (0, \alpha, c) (x)$ is an \\
  $(<, <_1, +)$-isomorphism.
\end{proposition}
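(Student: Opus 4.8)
The plan is to obtain $H$ essentially for free, as a restriction of the isomorphism $g(0,\alpha,c)$ already studied in Proposition \ref{g(0,a,c)_prop1}, and then to observe that the only genuinely new requirement, preservation of $<_1$, is vacuous on the two intervals involved.

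First I would recall from Proposition \ref{g(0,a,c)_prop1}(3) that $g \assign g(0,\alpha,c)$ is an $(<,+)$-isomorphism from $\tmop{Dom} g = X$ onto $\tmop{Im} g$ (recall $g(0,\alpha,c)$ is defined under the standing hypothesis $\alpha \leqslant c$). Since $(\alpha,\alpha\omega)\cap X \subseteq X$ and $H$ is $g$ restricted to $(\alpha,\alpha\omega)\cap X$ with codomain declared to be its own image, $H$ is automatically a bijection and preserves $<$. For the $+$-clause I would invoke the elementary fact that the restriction of a $+$-isomorphism $g : X \to Y$ to any $Z \subseteq X$, with codomain $g[Z]$, is again a $+$-isomorphism: for $a_1,a_2 \in Z$, if $a_1+a_2 \in Z$ then $g(a_1+a_2) = g(a_1)+g(a_2) \in g[Z]$; and if $g(a_1)+g(a_2) \in g[Z] \subseteq Y$, then $a_1+a_2 \in X$, so $g(a_1+a_2) = g(a_1)+g(a_2) \in g[Z]$ and hence $a_1+a_2 \in Z$ by injectivity of $g$. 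It then remains to check the declared codomain inclusion $H[(\alpha,\alpha\omega)\cap X] \subseteq (c,c\omega)$, which is immediate from the defining formula of $g(0,\alpha,c)$: an element of $(\alpha,\alpha\omega)\cap X$ lies in a block $[\alpha n,\alpha n+\alpha)$ with $n \geqslant 1$ and is sent to $cn+l$ with $0 \leqslant l < \alpha \leqslant c$; for $n=1$ the element exceeds $\alpha$, so $l > 0$ and $cn+l = c+l \in (c,c2)$, while for $n \geqslant 2$ one gets $cn+l \in [c2,c\omega)$, and in both cases this lies in $(c,c\omega)$.

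The substantive clause is the one for $<_1$: for $a_1,a_2 \in (\alpha,\alpha\omega)\cap X$ we need $a_1 <_1 a_2 \Longleftrightarrow H(a_1) <_1 H(a_2)$, and the point is that both sides are identically false. Writing $\alpha = \omega^{\mu}$ (possible since $\alpha \in \mathbbm{P}$), we have $\alpha\omega = \omega^{\mu+1}$; since no ordinal lies strictly between $\mu$ and $\mu+1$, no additive principal number lies strictly between $\alpha$ and $\alpha\omega$, i.e. $(\alpha,\alpha\omega)\cap\mathbbm{P} = \emptyset$. By Corollary \ref{corollary1.02.11.2008}, $x <_1 y$ forces $x \in \mathbbm{P}$, so no two elements of $(\alpha,\alpha\omega)$ are $<_1$-comparable. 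The same computation applied to $c$ gives $(c,c\omega)\cap\mathbbm{P} = \emptyset$, so no two elements of $(c,c\omega)$ — in particular $H(a_1)$ and $H(a_2)$ — are $<_1$-comparable either. Hence the biconditional is trivially true and $H$ is an $(<,<_1,+)$-isomorphism. I do not expect any real difficulty here: the $(<,+)$-part is a formal consequence of Proposition \ref{g(0,a,c)_prop1}, and once one sees that $(\alpha,\alpha\omega)$ and $(c,c\omega)$ are free of additive principal numbers the $<_1$-clause collapses; the only computational point worth a line of care is verifying that $H$ indeed maps into $(c,c\omega)$.
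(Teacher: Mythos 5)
Your proof is correct and takes essentially the same route as the paper: the $(<,+)$-part is inherited from Proposition \ref{g(0,a,c)_prop1} by restriction, and the $<_1$-clause is vacuous because neither $(\alpha,\alpha\omega)$ nor $(c,c\omega)$ contains any additive principal number, so no $<_1$-relation can hold between elements of either interval. The only cosmetic difference is that you reach this vacuity via Corollary \ref{corollary1.02.11.2008} directly, whereas the paper cites Proposition \ref{characterization_of_alpha<less>_1alpha+1} together with $\leqslant_1$-connectedness; both reduce to the same observation.
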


\begin{proof}
  Let $\alpha, c$, $X$ and $H$ be as stated. By previous proposition
  \ref{g(0,a,c)_prop1} follows easily that $H$ is an $(<, +)$-isomorphism.
  Moreover, $H$ is also an $<_1$-isomorphism because by proposition
  \ref{characterization_of_alpha<less>_1alpha+1} and $<_1$-connectedness it
  follows that $\forall a, b \in (\alpha, \alpha \omega) .a \nless_1 b$ and
  $\forall a, b \in (c, c \omega) .a \nless_1 b$.
\end{proof}

\begin{definition}
  \label{def_T(0,alpha,t)}Consider $\alpha \in \tmop{Class} (0)$ and $t \in
  \alpha \omega$. \\
  We define $T (0, \alpha, t) \assign \left\{ \begin{array}{l}
    \{t\} \text{ \tmop{iff} } t < \alpha\\
    \\
    \{t, - \alpha n + t\} \text{ \tmop{iff} } t \in [\alpha n, \alpha n +
    \alpha) \text{\tmop{for} \tmop{some}} n \in [1, \omega) .
  \end{array} \right.$
\end{definition}

\begin{proposition}
  \label{Domg(0,a,t)_and_T(0,a,t)}$\forall \alpha, c \in \tmop{Class} (0) .
  \forall t \in \alpha \omega .t \in \tmop{Dom} (g (0, \alpha, c))
  \Longleftrightarrow T (0, \alpha, t) \cap \alpha \subset c$
\end{proposition}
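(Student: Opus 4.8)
The plan is to prove the biconditional by a straightforward case analysis on the location of $t$ relative to $\alpha$, using directly the definitions of $g(0,\alpha,c)$, of its domain as computed in Proposition \ref{g(0,a,c)_prop1}(1), and of $T(0,\alpha,t)$ from Definition \ref{def_T(0,alpha,t)}. Fix $\alpha,c \in \tmop{Class}(0)$ and $t \in \alpha\omega$. By Definition \ref{def_T(0,alpha,t)} there are exactly two cases: either $t < \alpha$, or $t \in [\alpha n, \alpha n + \alpha)$ for a (necessarily unique) $n \in [1,\omega)$, since the intervals $[\alpha n, \alpha n + \alpha)$ for $n \geqslant 0$ partition $\alpha\omega$.

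First I would handle the case $t < \alpha$. Here $T(0,\alpha,t) = \{t\}$, so $T(0,\alpha,t) \cap \alpha = \{t\} = T(0,\alpha,t)$, and the condition $T(0,\alpha,t)\cap\alpha \subset c$ reads simply $t < c$, i.e.\ $t \in \alpha \cap c$. On the other hand, by Proposition \ref{g(0,a,c)_prop1}(1), $\tmop{Dom}\, g(0,\alpha,c) = (\alpha \cap c) \cup \bigcup_{n\in[1,\omega)}\{s\in[\alpha n,\alpha n+\alpha) \mid -\alpha n + s < c\}$; since $t < \alpha$, membership of $t$ in this domain is equivalent to $t \in \alpha\cap c$, as $t$ lies in none of the intervals $[\alpha n,\alpha n+\alpha)$ with $n\geqslant 1$. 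So both sides are equivalent to $t < c$, and the biconditional holds in this case.

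Next, the case $t \in [\alpha n, \alpha n + \alpha)$ with $n\in[1,\omega)$. Write $t = \alpha n + l$ with $l = -\alpha n + t \in \alpha$; then $T(0,\alpha,t) = \{t, l\}$. Since $t = \alpha n + l \geqslant \alpha n \geqslant \alpha > l$, we have $t \geqslant \alpha$, so $t \notin \alpha$ and thus $T(0,\alpha,t)\cap\alpha = \{l\}$ (as $l < \alpha$). Hence $T(0,\alpha,t)\cap\alpha \subset c$ is equivalent to $l < c$, i.e.\ to $-\alpha n + t < c$. By Proposition \ref{g(0,a,c)_prop1}(1) again, and using that $n$ is the unique index with $t \in [\alpha n,\alpha n+\alpha)$, membership $t \in \tmop{Dom}\, g(0,\alpha,c)$ is equivalent precisely to $-\alpha n + t < c$. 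So once more both sides coincide, completing the proof.

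I do not expect any real obstacle here; the only point requiring a little care is the bookkeeping about which interval $[\alpha n, \alpha n + \alpha)$ contains $t$ and the observation that these intervals are pairwise disjoint and cover $\alpha\omega$, so that the index $n$ (when $t \geqslant \alpha$) is well-defined and the big-union in Proposition \ref{g(0,a,c)_prop1}(1) contributes $t$ through exactly that one index. Everything else is an unwinding of the definitions, so the proof can simply be stated as ``immediate from Definition \ref{def_T(0,alpha,t)} and Proposition \ref{g(0,a,c)_prop1}'' with the two-case verification spelled out as above.
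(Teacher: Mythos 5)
Your proof is correct and is exactly the unwinding the paper intends: its own proof is just the one-line remark ``Direct from definition \ref{def_T(0,alpha,t)} and proposition \ref{g(0,a,c)_prop1},'' and your two-case verification ($t<\alpha$ versus $t\in[\alpha n,\alpha n+\alpha)$ for the unique $n\geqslant 1$) spells out precisely that reduction, with both sides of the biconditional collapsing to $t<c$ in the first case and to $-\alpha n+t<c$ in the second.
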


\begin{proof}
  Direct from definition \ref{def_T(0,alpha,t)} and proposition
  \ref{g(0,a,c)_prop1}.
\end{proof}

\begin{definition}
  Let $\alpha \in \tmop{Class} (0)$ and $t \in [\alpha, \alpha \omega]$. By
  $\alpha <^0 t$ we mean
  \begin{enumeratenumeric}
    \item $\alpha < t$
    
    \item $\forall B \subset_{\tmop{fin}} t. \exists \delta \in \tmop{Class}
    (0) \cap \alpha$ such that
    \begin{enumerateroman}
      \item $( \bigcup_{t \in B} T (0, \alpha, t) \cap \alpha) \subset
      \delta$;
      
      \item The function $h : B \longrightarrow h [B]$ defined as $h (x)
      \assign g (0, \alpha, \delta) (x)$ is an $(<, <_1, +)$-isomorphism with
      $h|_{\alpha} = \tmop{Id}_{\alpha}$.
    \end{enumerateroman}
  \end{enumeratenumeric}
  As usual, $\alpha \leqslant^0$ just means $\alpha <^0 t$ or $\alpha = t$.
\end{definition}

\begin{proposition}
  Let $\alpha \in \tmop{Class} (0)$, $(\xi_i)_{i \in I} \subset [\alpha,
  \alpha \omega] \ni \beta, \gamma$. Then
  \begin{enumeratenumeric}
    \item $\alpha \leqslant^0 \beta \Longrightarrow \alpha \leqslant_1 \beta$.
    
    \item If $\alpha \leqslant \beta \leqslant \gamma \wedge \alpha
    \leqslant^0 \gamma$ then $\alpha \leqslant^0 \beta$. \ \ \ \ \ \ \ \ \ \ \
    \ \ ($\leqslant^0$-connectedness)
    
    \item If $\forall i \in I. \alpha \leqslant^0 \xi_i \wedge \xi_i
    \underset{\tmop{cof}}{\longhookrightarrow} \beta$ then $\alpha \leqslant^0
    \beta$. \ \ \ \ \ \ \ ($\leqslant^0$-continuity)
  \end{enumeratenumeric}
\end{proposition}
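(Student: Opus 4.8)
The plan is to handle the three items separately, in each case reducing the claim about $\leqslant^0$ to the corresponding property of $\leqslant_1$ proved in Proposition~\ref{connectedness_and_continuity}, using the embeddings $g(0,\alpha,\delta)$ and the sets $T(0,\alpha,t)$ as the bridge. For item~1, I would argue directly from the definitions: given $\alpha \leqslant^0 \beta$ with $\alpha < \beta$, I must show $\alpha <_1 \beta$, i.e. that every $Z \subset_{\tmop{fin}} \beta$ admits an $(<,<_1,+)$-isomorphism into $\alpha$ fixing $Z \cap \alpha$. Given such a $Z$, apply the defining clause of $\alpha <^0 \beta$ to $B := Z$ to obtain $\delta \in \tmop{Class}(0) \cap \alpha$ with $\bigcup_{t \in B} T(0,\alpha,t) \cap \alpha \subset \delta$, and the function $h(x) := g(0,\alpha,\delta)(x)$ which is by hypothesis an $(<,<_1,+)$-isomorphism on $B$ fixing $\alpha$. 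By Proposition~\ref{Domg(0,a,t)_and_T(0,a,t)} the containment $T(0,\alpha,t)\cap\alpha \subset \delta$ for each $t \in B$ is exactly what guarantees $B \subset \tmop{Dom}\, g(0,\alpha,\delta)$, so $h$ is well-defined on all of $B$; and $\tmop{Im}\, g(0,\alpha,\delta) \subset \delta\omega \subset \alpha\omega$, but more to the point $h[B] \subset \alpha$ since $\delta < \alpha \in \mathbbm{P}$ forces $\delta\omega \leqslant \alpha$. Thus $h$ witnesses the required finite embedding, giving $\alpha <_1 \beta$; the case $\beta = \alpha$ is trivial.

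For item~2 ($\leqslant^0$-connectedness), assume $\alpha \leqslant \beta \leqslant \gamma$ and $\alpha \leqslant^0 \gamma$; I may assume $\alpha < \beta < \gamma$, the boundary cases being immediate. Given $B \subset_{\tmop{fin}} \beta$, I would like to reuse the witness $\delta$ coming from $\alpha <^0 \gamma$ applied to this same $B$ — since $B \subset_{\tmop{fin}} \beta \subset_{\tmop{fin}} \gamma$, the defining clause of $\alpha <^0 \gamma$ hands us a $\delta \in \tmop{Class}(0)\cap\alpha$ satisfying (i) and (ii) for $B$ verbatim, and these two conditions are precisely what $\alpha <^0 \beta$ demands of $B$. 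So connectedness is essentially a triviality: the quantifier "$\forall B \subset_{\tmop{fin}} t$" only gets easier to satisfy as $t$ shrinks, because $T(0,\alpha,\cdot)$ and the candidate maps $g(0,\alpha,\delta)$ do not depend on the ambient $t$ at all. The only point needing a line is that $B$ genuinely sits inside $\gamma$, which is clear from $\beta \leqslant \gamma$.

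For item~3 ($\leqslant^0$-continuity), suppose $\xi_i \underset{\tmop{cof}}{\longhookrightarrow} \beta$ with $\alpha \leqslant^0 \xi_i$ for all $i$; I want $\alpha \leqslant^0 \beta$, and I may assume $\alpha < \beta$. Given $B \subset_{\tmop{fin}} \beta$, since $B$ is finite and the $\xi_i$ are cofinal in $\beta$ there is an index $i_0$ with $B \subset_{\tmop{fin}} \xi_{i_0}$; then applying the definition of $\alpha <^0 \xi_{i_0}$ to this $B$ produces the required $\delta$ and $h$, which again are exactly what $\alpha <^0 \beta$ needs for $B$. So continuity, like connectedness, follows because the witness data attached to a fixed finite $B$ is insensitive to the upper endpoint. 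The one subtlety is that $\beta$ itself should lie in $[\alpha,\alpha\omega]$ for the statement "$\alpha <^0 \beta$" to be meaningful; this is part of the hypothesis "$\beta \in [\alpha,\alpha\omega]$" built into the proposition, so there is nothing to check.

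The main obstacle I anticipate is not in items~2 and~3 — those are bookkeeping — but in making item~1 fully rigorous: one must verify that the map $g(0,\alpha,\delta)$ restricted to a finite $B \subset \beta$ really is an $<_1$-isomorphism and not merely an $(<,+)$-isomorphism. Proposition~\ref{g(0,a,c)_prop1} only gives the $(<,+)$ part; the $<_1$-preservation on the relevant finite sets presumably rests on Proposition~\ref{g(0,a,c)|_(a,a2)_is_iso} together with the observation (from Proposition~\ref{characterization_of_alpha<less>_1alpha+1} and $<_1$-connectedness) that no two distinct elements of $(\alpha,\alpha\omega)$ stand in the $<_1$-relation, and symmetrically in $(\delta,\delta\omega)$, so both sides of every "$a <_1 b$" biconditional are simply false except possibly when one argument is below $\alpha$ (where $h$ is the identity) — but this is precisely the content already packaged into clause (ii) of the definition of $\alpha <^0 \xi$, so in fact item~1 inherits the $<_1$-isomorphism property for free from that clause and there is genuinely nothing hard left. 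I would therefore present all three parts as short arguments and flag that the substantive work was done in the preceding propositions.
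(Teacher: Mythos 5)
Your proof is correct; the paper itself leaves this proposition to the reader, and your argument is exactly the intended routine one: the witness data $(\delta,h)$ attached to a fixed finite $B$ in the definition of $<^0$ does not depend on the upper endpoint $t$ (which gives items 2 and 3 at once), clause (ii) of that definition already asserts that $h$ is a $(<,<_1,+)$-isomorphism (so item 1 needs no separate $<_1$-argument), and $h[B]\subset\delta\omega\leqslant\alpha$ because $\delta<\alpha\in\mathbbm{P}$, so $h$ really lands in $\alpha$ as Definition \ref{Definition_of_<less>=_1} requires. The one micro-point to add in item 3 is to pick $i_0$ with $\xi_{i_0}>\max(B\cup\{\alpha\})$, which cofinality permits, so that $\xi_{i_0}>\alpha$ and the clause of $\alpha<^0\xi_{i_0}$ (rather than the degenerate case $\xi_{i_0}=\alpha$) is actually available.
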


\begin{proof}
  Left to the reader.
\end{proof}

\begin{proposition}
  \label{<less>^0_1st_cof._prop.}(First fundamental cofinality property of
  $<^0$). \\
  Let $\alpha \in \tmop{Class} (0)$ and $t \in [\alpha, \alpha \omega)$.\\
  Then $\alpha <^0 t + 1 \Longrightarrow \alpha \in \tmop{Lim} \{\beta \in
  \tmop{Class} (0) | T (0, \alpha, t) \cap \alpha \subset \beta \wedge \beta
  \leqslant_1 g (0, \alpha, \beta) (t)\}$.
\end{proposition}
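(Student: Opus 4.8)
The plan is to unwind what $\alpha <^0 t+1$ says about finite subsets of $t+1$ and squeeze out of it a cofinal family of ordinals $\beta \in \tmop{Class}(0) \cap \alpha$ with the desired two properties. First I would recall that by definition of $<^0$, for every $B \subset_{\tmop{fin}} t+1$ there is $\delta \in \tmop{Class}(0) \cap \alpha$ with $(\bigcup_{s \in B} T(0,\alpha,s) \cap \alpha) \subset \delta$ and such that $g(0,\alpha,\delta)$ restricted to $B$ is an $(<,<_1,+)$-isomorphism fixing $\alpha$ pointwise. The key observation is that since $t \in [\alpha,\alpha\omega)$, we may always \emph{include} $t$ itself in $B$; then $h = g(0,\alpha,\delta)|_B$ being an $<_1$-isomorphism, and $\alpha \cap \delta = \delta$ being fixed, should force $\delta \leqslant_1 g(0,\alpha,\delta)(t)$ — this is where one transfers the ``$\alpha \leqslant_1$ something above $t$'' content hidden in $\alpha <^0 t+1$ down to $\delta$. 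More precisely, from $\alpha <^0 t+1$ and $\leqslant^0$-connectedness we get $\alpha \leqslant^0 t$, hence $\alpha \leqslant_1 t$ by part 1 of the preceding proposition, but we actually need the localized statement at $\delta$, which is exactly what the isomorphism $h$ delivers: $h$ maps the pair witnessing $<_1$-behavior around $t$ to the corresponding pair around $g(0,\alpha,\delta)(t)$.

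Next I would establish cofinality. Fix any $\rho < \alpha$; I must produce $\beta \in \tmop{Class}(0)$ with $\rho < \beta < \alpha$, $T(0,\alpha,t)\cap\alpha \subset \beta$, and $\beta \leqslant_1 g(0,\alpha,\beta)(t)$. Apply the definition of $\alpha <^0 t+1$ to a finite set $B$ chosen to contain $t$, to contain $\rho$ (or some ordinal above $\rho$ that lies in $t+1$), and to contain enough of the structure of $T(0,\alpha,t)\cap\alpha$; this yields some $\delta$. Because $h$ fixes $Z \cap \alpha$ and in particular fixes $\rho$, and because $\tmop{Im}\, h \subset \delta\omega$ with the relevant elements below $\delta$, we get $\rho < \delta$. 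Now re-examine: does the $\delta$ produced satisfy $\beta \leqslant_1 g(0,\alpha,\beta)(t)$ with $\beta = \delta$? That is the crux. One shows $g(0,\alpha,\delta)(t) = h(t)$, and that $h$ being an $<_1$-isomorphism on $B$ transfers whatever $<_1$-relation $\alpha$ has with the ``copy of $t$ inside $\alpha$'' — but since here the copy is literally $t$ when $t<\alpha$ is false in general, one must be careful: if $t \geqslant \alpha$, write $t \in [\alpha n, \alpha n+\alpha)$, so $T(0,\alpha,t) = \{t, -\alpha n + t\}$, and $g(0,\alpha,\delta)(t) = \delta n + (-\alpha n + t)$. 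The element $-\alpha n + t < \alpha$ is fixed by $h$, and the $<_1$-facts about it near $\alpha$ get copied near $\delta$, giving $\delta \leqslant_1 g(0,\alpha,\delta)(t)$.

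I would then assemble: the set $S \assign \{\beta \in \tmop{Class}(0) \mid T(0,\alpha,t)\cap\alpha \subset \beta \wedge \beta \leqslant_1 g(0,\alpha,\beta)(t)\}$ contains, for each $\rho<\alpha$, some element $>\rho$ (the $\delta$ just constructed), and every element of $S$ is $<\alpha$; hence $\sup(S \cap \alpha) = \alpha$, i.e. $\alpha \in \tmop{Lim}(S)$, which is the claim. The main obstacle, and the step deserving the most care, is verifying that the $\delta$ handed to us by the raw definition of $<^0$ actually satisfies $\delta \leqslant_1 g(0,\alpha,\delta)(t)$ rather than merely some weaker consequence: this requires choosing $B$ cleverly (large enough that the isomorphism $h$ genuinely pins down the $<_1$-type of $g(0,\alpha,\delta)(t)$ over $\delta$) and then invoking $\leqslant_1$-connectedness, $\leqslant_1$-continuity, and the characterization in Proposition \ref{characterization_of_alpha<less>_1alpha+1} to promote the finite-substructure information into the genuine relation $\delta \leqslant_1 g(0,\alpha,\delta)(t)$. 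A secondary subtlety is handling the degenerate case where $T(0,\alpha,t)\cap\alpha$ is already large, but finiteness of that set together with $\alpha \in \tmop{Lim}\,\mathbbm{P}$ (which holds since $\alpha \leqslant_1 t$, via Corollary \ref{alpha<less>_1beta_implies_alpha_in_Lim(P)}) keeps $\beta$ strictly below $\alpha$.
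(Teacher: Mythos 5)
Your proposal is correct and follows essentially the same route as the paper: apply the definition of $\alpha <^0 t+1$ to a finite set containing $\rho$, $\alpha$ and $t$, read off $\rho < \delta$ from the $T$-inclusion, and transfer $\alpha \leqslant_1 t$ through the $(<,<_1,+)$-isomorphism $g(0,\alpha,\delta)$ to obtain $\delta \leqslant_1 g(0,\alpha,\delta)(t)$. (Your aside that every element of the target set is $<\alpha$ is not needed and not quite true, e.g.\ $\alpha$ itself belongs to it, but this does not affect the conclusion since $\tmop{Lim}$ only looks at $\sup(S\cap\alpha)$.)
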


\begin{proof}
  Let $\alpha$, $t$ be as stated.

  Suppose $\alpha <^0 t + 1$. \ \ \ \ \ \ \ {\tmstrong{(*1)}}

  Let $\gamma \in \alpha$ be arbitrary and consider $B_{\gamma} \assign
  \{\gamma, \alpha, t\} \subset_{\tmop{fin}} t + 1$. By (*1) there exists
  $\delta_{\gamma} \in \alpha \cap \tmop{Class} (0)$ such that $( \bigcup_{q
  \in B} T (0, \alpha, q) \cap \alpha) \subset \delta_{\gamma}$ and the
  function $h : B \longrightarrow h [B] \subset \alpha$, $h (x) \assign g (0,
  \alpha, \delta_{\gamma}) (x)$ is an ($<, <_1, +$)-isomorphism with
  $h|_{\alpha} = \tmop{Id}_{\alpha}$. In particular, note:
  
  1. $\gamma < \delta_{\gamma}$ because $\gamma \in ( \bigcup_{q \in B} T (0,
  \alpha, q) \cap \alpha) \subset \delta_{\gamma}$.
  
  2. $\delta_{\gamma} = g (0, \alpha, \delta_{\gamma}) (\alpha) \leqslant_1 g
  (0, \alpha, \delta_{\gamma}) (t)$ because $T (0, \alpha, t) \cap \alpha
  \subset \delta_{\gamma}$ and $\alpha \leqslant_1 t \Longleftrightarrow h
  (\alpha) \leqslant_1 h (t)$.

  Since the previous was done for arbitrary $\gamma < \alpha$, 1 and 2 show
  that \\
  $\forall \gamma \in \alpha \exists \delta_{\gamma} \in \{\beta \in
  \tmop{Class} (0) | \gamma < \beta \wedge T (0, \alpha, t) \cap \alpha
  \subset \beta \wedge \beta \leqslant_1 g (0, \alpha, \beta) (t)\}$. Thus \\
  $\alpha \in \tmop{Lim} \{\beta \in \tmop{Class} (0) | T (0, \alpha, t) \cap
  \alpha \subset \beta \wedge \beta \leqslant_1 g (0, \alpha, \beta) (t)\}$.
\end{proof}

\begin{proposition}
  \label{<less>^0_2nd_cof._prop.}(Second fundamental cofinality property of
  $<^0$). \\
  Let $\alpha \in \tmop{Class} (0)$ and $t \in [\alpha, \alpha \omega)$.\\
  Then $\alpha <^0 t + 1 \Longleftarrow \alpha \in \tmop{Lim} \{\beta \in
  \tmop{Class} (0) | T (0, \alpha, t) \cap \alpha \subset \beta \wedge \beta
  \leqslant_1 g (0, \alpha, \beta) (t)\}$.
\end{proposition}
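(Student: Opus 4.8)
The plan is to prove the converse direction, so assume $\alpha \in \tmop{Lim} \{\beta \in \tmop{Class} (0) \mid T (0, \alpha, t) \cap \alpha \subset \beta \wedge \beta \leqslant_1 g (0, \alpha, \beta) (t)\}$; call this set $S$. I must show $\alpha <^0 t + 1$. Since $t \geqslant \alpha$ we have $t + 1 > \alpha$, so condition 1 in the definition of $<^0$ holds. For condition 2, fix an arbitrary $B \subset_{\tmop{fin}} t + 1$ and I need to produce $\delta \in \tmop{Class}(0) \cap \alpha$ with $\bigcup_{q \in B} T(0, \alpha, q) \cap \alpha \subset \delta$ such that $h := g(0, \alpha, \delta)|_B$ is an $(<, <_1, +)$-isomorphism fixing $\alpha$ pointwise below $\alpha$.

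First I would dispose of the trivial subcase $\alpha \nin B$: then $B \subset \alpha$ and $T(0,\alpha, q) = \{q\}$ for $q \in B$, so the identity on $B$ works with any suitable $\delta$; more carefully, pick any $\delta \in S$ above all of $B$, which exists by cofinality, and note $g(0,\alpha,\delta)$ restricts to the identity on $\alpha$. In the main subcase $\alpha \in B$, write $B = (B \cap \alpha) \cup \{\alpha\} \cup (B \cap (\alpha, t])$. The key move is: using that $\alpha = \sup S$, choose $\delta \in S$ large enough that (a) $\delta$ exceeds every element of $B \cap \alpha$ and exceeds $\bigcup_{q \in B} T(0,\alpha,q) \cap \alpha$ — possible since that union is a finite subset of $\alpha$ — and (b) $\delta \leqslant_1 g(0,\alpha,\delta)(t)$, which holds automatically because $\delta \in S$. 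Then by Proposition \ref{Domg(0,a,t)_and_T(0,a,t)}, every $q \in B$ lies in $\tmop{Dom}\, g(0,\alpha,\delta)$, so $h$ is well-defined on $B$, and by Proposition \ref{g(0,a,c)_prop1}(3) it is an $(<,+)$-isomorphism onto its image with $h|_\alpha = \tmop{Id}_\alpha$.

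It remains to check that $h$ is an $<_1$-isomorphism on $B$, i.e. $a <_1 b \Longleftrightarrow h(a) <_1 h(b)$ for all $a, b \in B$. This is where the content sits. Elements of $B$ below $\alpha$ are fixed, and by connectedness (Proposition \ref{connectedness_and_continuity}a) together with Corollary \ref{alpha<less>_1beta_implies_alpha_in_Lim(P)}, no element of $(\alpha, \alpha\omega)$ (hence none of $(\alpha, t]$, resp. $(\delta, \delta\omega)$ on the image side) can be a $<_1$-predecessor of anything, so the only nontrivial relations to track involve pairs where the smaller element is $\leqslant \alpha$ (on the domain side) or $\leqslant \delta$ (on the image side). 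For such pairs, the relevant equivalences reduce to two facts: relations entirely below $\alpha$ are preserved since $h$ is the identity there; and $\alpha \leqslant_1 q \Longleftrightarrow \delta \leqslant_1 h(q)$ for $q \in B$ with $q \geqslant \alpha$. The forward direction of this last equivalence follows from $\delta \leqslant_1 g(0,\alpha,\delta)(t)$ together with $\leqslant_1$-connectedness applied between $\delta$ and $g(0,\alpha,\delta)(t)$, since $h(q) = g(0,\alpha,\delta)(q)$ lies in $[\delta, g(0,\alpha,\delta)(t)]$ whenever $q \in [\alpha, t]$; the reverse direction is symmetric, using that $\delta \leqslant_1 h(q)$ forces $\delta \leqslant_1 g(0,\alpha,\delta)(t)$ is consistent and pulls back via $g(0,c,\alpha) = g(0,\alpha,c)^{-1}$. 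I expect the main obstacle to be the careful bookkeeping of exactly which $<_1$-relations among the finitely many elements of $B$ need checking and matching them with the analogous relations in $h[B]$ — in particular making sure the choice of $\delta$ is simultaneously large enough for the domain-inclusion requirement and still lands in $S$; the cofinality hypothesis $\alpha = \sup S$ is precisely what makes both demands compatible.
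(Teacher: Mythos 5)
Your outline shares the paper's skeleton (pick $\delta$ cofinally from the hypothesized set $S$, transport $B$ via $g(0,\alpha,\delta)$, cite Propositions \ref{g(0,a,c)_prop1} and \ref{Domg(0,a,t)_and_T(0,a,t)} for the $(<,+)$ part), but the $<_1$-verification has a genuine circularity. For a pair $(\alpha,b)$ with $\alpha<b\in B$ you need $\alpha\leqslant_1 b\Longleftrightarrow\delta\leqslant_1 h(b)$. The right-hand side holds unconditionally: $\delta\leqslant h(b)\leqslant g(0,\alpha,\delta)(t)$ and $\delta\leqslant_1 g(0,\alpha,\delta)(t)$ give it by connectedness. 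So the isomorphism condition forces you to prove $\alpha\leqslant_1 b$ outright for every $b\in B\cap(\alpha,t]$ --- which, by connectedness, is essentially the assertion $\alpha\leqslant_1 t$ that the proposition exists to establish. Your ``reverse direction is symmetric\ldots pulls back via $g(0,c,\alpha)$'' supplies no mechanism for transporting a $\leqslant_1$-relation from $[\delta,\delta\omega)$ back to $[\alpha,\alpha\omega)$. The paper avoids this by proving $\forall s\in[\alpha,t+1].\ \alpha\leqslant^0 s$ by induction on $s$, using $\leqslant^0$-continuity at limits; at a successor $s=l+1$ the induction hypothesis $\alpha\leqslant^0 l$ plus $\leqslant^0$-connectedness yields exactly the missing fact $\alpha\leqslant_1 b$ for $b\in B$, $\alpha\leqslant b\leqslant l$. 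Your plan has no induction and hence no source for this fact.

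Separately, your choice of $\delta$ is too weak. For a pair $a<\alpha\leqslant b$ with $a\nless_1 b$ you must show $a\nless_1 h(b)$, and since $a\nless_1\alpha$ here (else $a<_1 b$), this amounts to $m(a)<h(b)$. Your conditions only put $\delta$ above $B\cap\alpha$ and above $\bigcup_{q\in B}T(0,\alpha,q)\cap\alpha$, but $m(a)$ can lie strictly between $\max(B\cap\alpha)$ and $\alpha$ (for instance $a\leqslant_1 a\omega<\alpha$); a $\delta$ meeting your criteria with $\delta\leqslant m(a)$ then gives, for $b=\alpha$, $a<_1\delta=h(\alpha)$ while $a\nless_1\alpha$, so $h$ is not a $<_1$-isomorphism. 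This is precisely why the paper first enlarges $B$ to $B\cup A$ with $A=\{\alpha,l\}\cup\{m(a)\mid a\in B\cap\alpha\wedge m(a)<\alpha\}$ and only then chooses $\delta$ containing $\bigcup_{q\in B\cup A}T(0,\alpha,q)\cap\alpha$. Both repairs fit inside your framework, but as written the proposal does not go through.
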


\begin{proof}
  Let $\alpha$, $t$ be as stated.

  Suppose  $\alpha \in \tmop{Lim} \{\beta \in \tmop{Class} (0) | T (0, \alpha,
  t) \cap \alpha \subset \beta \wedge \beta \leqslant_1 g (0, \alpha, \beta)
  (t)\}$. \ \ {\tmstrong{(*1)}}

  We prove by induction: $\forall s \in [\alpha, t + 1] . \alpha \leqslant^0
  s$. \ \ \ \ \ \ \ {\tmstrong{(*2)}}

  Let $s \in [\alpha, t + 1]$ and suppose $\forall q \in s \cap [\alpha, t +
  1] . \alpha \leqslant^0 q$. \ \ \ \ \ \ \ {\tmstrong{(IH)}}

  Case $s = \alpha$.
  
  Then clearly (*2) holds.

  Case $s \in \tmop{Lim} \cap (\alpha, t + 1]$.
  
  Since by our (IH) $\forall q \in s \cap [\alpha, t + 1] . \alpha \leqslant^0
  q$, then $\alpha \leqslant^0 s$ follows by $\leqslant^0$-continuity.

  Suppose $s = l + 1 \in (\alpha, t + 1]$.
  
  Let $B \subset_{\tmop{fin}} l + 1$ be arbitrary. Consider $A \assign
  \{\alpha, l\} \cup \{m (a) | a \in B \cap \alpha \wedge m (a) < \alpha\}$.
  Then the set $\bigcup_{q \in B \cup A} T (0, \alpha, q) \cap \alpha$ is
  finite and then, by (*1), there is some $\delta \in \tmop{Class} (0) \cap
  \alpha$ such that $( \bigcup_{q \in B \cup A} T (0, \alpha, q) \cap \alpha)
  \subset \delta \wedge \delta \leqslant_1 g (0, \alpha, \delta) (t)$. \ \ {\tmstrong{(*3)}}

  Consider the function $h : B \longrightarrow h [B] \subset \alpha$ defined
  as $h (x) \assign g (0, \alpha, \delta) (x)$. From (*3) and propositions
  \ref{Domg(0,a,t)_and_T(0,a,t)} we know that $h$ is well defined; moreover,
  from proposition \ref{g(0,a,c)_prop1} it follows that $h$ is an ($<,
  +$)-isomorphism with $h|_{\alpha} = \tmop{Id}_{\alpha}$. \ \ \ \ \ \ \
  {\tmstrong{(*4)}}

  Before showing that $h$ is an $<_1$-isomorphism, we do two observations:

  Let $b \in B$ with $b \geqslant \alpha$. Then $\alpha \leqslant b \leqslant
  l$, which, together with $\alpha\underset{\text{\tmop{by (IH)}}}{\leqslant^0}l$, imply
  by $\leqslant^0$-connectedness that $\alpha \leqslant^0 b$; subsequently,
  $\alpha \leqslant_1 b$. This shows $\forall b \in B. \alpha \leqslant b
  \Longrightarrow \alpha \leqslant_1 b$ \ \ \ \ \ \ \ {\tmstrong{(*5)}}

  Let $b \in B$ with $b \geqslant \alpha$. Then $\alpha \leqslant b \leqslant
  t$ implies \\
  $\delta = g(0, \alpha, \delta) (\alpha)\underset{g(0, \alpha, \delta)\text{ strictly increasing}
  }{\leqslant}g(0, \alpha, \delta)(b)$\\
  \hspace*{16ex}
  $\underset{g (0,\alpha,\delta) \text{ strictly increasing}}{\leqslant}g (0, \alpha, \delta)(t)$;
  the latter together with \\ 
  $\delta\underset{\text{ by (*3)}}{<_1}$ $g (0,
  \alpha, \delta) (t)$ imply by $\leqslant_1$-connectedness that \\
  $g (0, \alpha, \delta) (\alpha) = \delta \leqslant_1 g (0, \alpha, \delta)
  (b)$. All this shows $\forall b \in B. \alpha \leqslant b \Longrightarrow
  \delta \leqslant_1 g (0, \alpha, \delta) (b)$ \ \ \ \ \ \ \
  {\tmstrong{(*6)}}.

  Now we show that $h$ is an $<_1$-isomorphism. \ \ \ \ \ \ \
  {\tmstrong{(*7)}}

  Let $a, b \in B$ with $a < b$.
  
  {\tmstrong{Case}} $\tmmathbf{\alpha < a < b}$.
  
  Then $a <_1 b$ $\underset{\text{by proposition }
  \ref{g(0,a,c)|_(a,a2)_is_iso}}{\Longleftrightarrow}$ $h (a) = g (0, \alpha,
  \delta) (a) <_1 g (0, \alpha, \delta) (b) = h (b)$.
  
  {\tmstrong{Case}} $\tmmathbf{a = \alpha < b}$.
  
  By (*5) and (*6) we have that $\alpha <_1 b$ and $h (\alpha) = g (0,
  \alpha, \delta) (\alpha) = \delta <_1 g (0, \alpha, \delta) (b) = h (b)$.
  
  {\tmstrong{Case}} $\tmmathbf{a, b < \alpha}$.
  
  Then $a <_1 b$ $\underset{\text{by (*4)}}{\Longleftrightarrow}$ $a = h (a) <_1 b = h
  (b)$.
  
  {\tmstrong{Case}} $\tmmathbf{a < \alpha \leqslant b}$.
  \begin{itemizedot}
    \item $a <_1 b$ $\underset{\text{by } \leqslant_1\text{-connectedness and
    (*5)}}{\Longrightarrow}$ $a <_1 \alpha \leqslant_1 b$ $\underset{\text{by
    proposition }\ref{g(0,a,c)_prop1} \text{ and by (*6)}}{\Longrightarrow}$ \\
    $a = g (0, \alpha, \delta) (a) < g (0, \alpha, \delta) (\alpha) = \delta <
    \alpha \wedge a <_1 \alpha \wedge \delta \leqslant_1 g (0, \alpha, \delta)
    (b)$ $\underset{\text{by } \leqslant_1\text{-connectedness}}{\Longrightarrow}$ \\
    $a = g (0, \alpha, \delta) (a) <_1 g (0, \alpha, \delta) (\alpha) = \delta
    \wedge \delta \leqslant_1 g (0, \alpha, \delta) (b)$ $\underset{\text{by }
    \leqslant_1\text{-transitivity} }{\Longrightarrow}$\\
    $h (a) = g (0, \alpha, \delta) (a) <_1 g (0, \alpha, \delta) (b) = h (b)$.
    
    \item $a \nless_1 b \Longrightarrow a \nless_1 \alpha$ (because $a <_1
    \alpha$ implies, using (*5), that $a <_1 b$), that is, $a \in B \cap
    \alpha$ with $m (a) < \alpha$. Then, $m (a)$ $\underset{\text{by (*3)}}{<}$ $\delta
    = g (0, \alpha, \delta) (\alpha)$ $\underset{g (0, \alpha, \delta) \text{ is
    strictly increasing}}{\leqslant}$ $g (0, \alpha, \delta) (b)$, that is, $h
    (\alpha) = a \nless_1 g (0, \alpha, \delta) (b) = h (b)$.
  \end{itemizedot}

  The previous shows that (*7) holds. In fact, (4*) and (7*) show that (2*)
  also holds for the case $s = l + 1 \subset (\alpha, t + 1]$ and with this we
  have concluded the proof of (*2). Hence, the proposition holds.
\end{proof}

The idea now is that $<_1$ and $<^0$ have something to do with each other. The
relation between $<_1$ and $<^0$ is very direct (see next proposition
\ref{prop._<less>^0<less>==<gtr><less>_1}); however, when we introduce
$\tmop{Class} (1)$ (or in general $\tmop{Class} (n)$ for $n \in [1, n]$), the
way to relate $<_1$ with a relation $<^1$ (or in general $<^n$ for $n \in [1,
n]$) will be much harder and will be done through the covering theorem. So,
said in other words, the covering theorem for $\tmop{Class} (0)$ is trivial
and therefore we can prove the next proposition
\ref{prop._<less>^0<less>==<gtr><less>_1} without anymore preparations.

\begin{proposition}
  \label{prop._<less>^0<less>==<gtr><less>_1}Let $\alpha \in \tmop{Class} (0)$
  and $t \in [\alpha, \alpha \omega)$. Then $\alpha <^0 t + 1
  \Longleftrightarrow \alpha <_1 t + 1$
\end{proposition}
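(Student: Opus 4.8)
The plan is to prove the two implications separately, using the two cofinality properties (Propositions \ref{<less>^0_1st_cof._prop.} and \ref{<less>^0_2nd_cof._prop.}) as the bridge between $<^0$ and $<_1$ for ordinals of the shape $t+1$. Since $t \in [\alpha, \alpha\omega)$, either $t = \alpha$ (and then $t+1 = \alpha+1$, a case I can dispatch using Proposition \ref{characterization_of_alpha<less>_1alpha+1} together with the fact that $\alpha \in \tmop{Class}(0) = \mathbbm{P}$ does \emph{not} imply $\alpha \in \tmop{Lim}\,\mathbbm{P}$ in general, so I must be careful here — actually the equivalence $\alpha <^0 \alpha+1 \Longleftrightarrow \alpha <_1 \alpha + 1$ should still fall out of the general argument below, so I will not treat it as a genuinely separate case) or $t \in [\alpha n, \alpha n + \alpha)$ for some $n \in [1,\omega)$.

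For the forward direction $\alpha <^0 t+1 \Longrightarrow \alpha <_1 t+1$: Proposition \ref{<less>^0_1st_cof._prop.} gives that $\alpha \in \tmop{Lim}\{\beta \in \tmop{Class}(0) \mid T(0,\alpha,t)\cap\alpha \subset \beta \wedge \beta \leqslant_1 g(0,\alpha,\beta)(t)\}$. I would like to convert each such witness $\beta$ into a statement that $\alpha \leqslant_1$ some ordinal $\leqslant t+1$, and then invoke $\leqslant_1$-continuity (Proposition \ref{connectedness_and_continuity}(b)). The natural candidate is to check directly from the definition of $<_1$ that $\alpha <_1 t+1$: given $Z \subset_{\tmop{fin}} t+1$, I enlarge $Z$ to include $\alpha$, $t$, and the relevant $m(a)$'s as in the proof of Proposition \ref{<less>^0_2nd_cof._prop.}, pick a witness $\beta < \alpha$ from the club set above with $\bigcup_{q} T(0,\alpha,q)\cap\alpha \subset \beta$, and use $h(x) := g(0,\alpha,\beta)(x)$ restricted to $Z$. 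Propositions \ref{g(0,a,c)_prop1}, \ref{g(0,a,c)|_(a,a2)_is_iso} and \ref{Domg(0,a,t)_and_T(0,a,t)} guarantee $h$ is a well-defined $(<,+)$-isomorphism fixing $\alpha$ pointwise, and the $<_1$-preservation is handled exactly as in the four-case analysis of Proposition \ref{<less>^0_2nd_cof._prop.}, using $\beta \leqslant_1 g(0,\alpha,\beta)(t)$ and $\leqslant_1$-connectedness/transitivity to transfer $<_1$-facts about $\alpha$ and elements above it. In effect, the forward direction just \emph{re-runs} the body of the proof of Proposition \ref{<less>^0_2nd_cof._prop.} but landing inside the definition of $\alpha <_1 t+1$ rather than $\alpha <^0 s$.

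For the reverse direction $\alpha <_1 t+1 \Longrightarrow \alpha <^0 t+1$: by Proposition \ref{<less>^0_2nd_cof._prop.} it suffices to show $\alpha \in \tmop{Lim}\{\beta \in \tmop{Class}(0) \mid T(0,\alpha,t)\cap\alpha \subset \beta \wedge \beta \leqslant_1 g(0,\alpha,\beta)(t)\}$. Fix $\gamma < \alpha$; I must produce $\beta \in \tmop{Class}(0)$ with $\gamma < \beta < \alpha$, $T(0,\alpha,t)\cap\alpha \subset \beta$, and $\beta \leqslant_1 g(0,\alpha,\beta)(t)$. Here I use the hypothesis $\alpha <_1 t+1$ on the finite set $Z := T(0,\alpha,t) \cup \{\gamma,\alpha,t\} \subset_{\tmop{fin}} t+1$ (note $t \in T(0,\alpha,t)$ already, and $T(0,\alpha,t)\cap\alpha$ is the "small part" of $t$ under $g$): this yields a finite $\tilde Z \subset \alpha$ and an $(<,+,<_1)$-isomorphism $k : Z \to \tilde Z$ with $k|_\alpha = \tmop{Id}$. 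Set $\beta := k(\alpha) \in \alpha$; then $\beta > \gamma$ since $\gamma = k(\gamma) < k(\alpha) = \beta$, and $\beta \in \mathbbm{P} = \tmop{Class}(0)$ because $k$ preserves $+$ and $\alpha$ is additively principal (this needs $\alpha \in Z$ together with enough additive structure in $Z$ to force $k(\alpha)$ principal — a small point I should verify, possibly by adding a witness like a pair summing past $\beta$, exactly as in Proposition \ref{prop0.03.11.2008}). The crucial identification is $k(t) = g(0,\alpha,\beta)(t)$: since $t \in [\alpha n, \alpha n + \alpha)$, writing $t = \alpha n + l$ with $l < \alpha$, I include in $Z$ the points $\alpha n$ and $l$ (and the additive relation $\alpha n + l = t$, which I arrange by also putting $\alpha, 2\alpha, \dots, \alpha n$ and the relevant sums into $Z$), so that $k$ must send $t \mapsto \beta n + l = g(0,\alpha,\beta)(t)$; and $k$ preserving $<_1$ gives $\alpha \leqslant_1 t \Longleftrightarrow \beta = k(\alpha) \leqslant_1 k(t) = g(0,\alpha,\beta)(t)$, while $\alpha \leqslant_1 t$ holds by $\leqslant_1$-connectedness from $\alpha <_1 t+1$. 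Finally $T(0,\alpha,t)\cap\alpha = \{l\} \subset \beta$ follows since $l = k(l) < k(\alpha) = \beta$.

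The main obstacle is the bookkeeping in the reverse direction: arranging the finite set $Z$ so that the isomorphism $k$ furnished by $\alpha <_1 t+1$ is \emph{forced} to act as $g(0,\alpha,\beta)$ on the relevant points. This means including enough of the additive skeleton $\{\alpha, 2\alpha, \ldots, \alpha n, \alpha n + l = t\}$ — and the sums witnessing it — that a $+$-isomorphism fixing $\alpha$ pointwise has no freedom in where it sends $t$, and simultaneously guaranteeing $k(\alpha)$ is additively principal. Once the identification $k(t) = g(0,\alpha,\beta)(t)$ and $\beta \in \tmop{Class}(0)$ are nailed down, both directions are routine applications of connectedness, continuity, and transitivity together with the cofinality Propositions \ref{<less>^0_1st_cof._prop.} and \ref{<less>^0_2nd_cof._prop.}.
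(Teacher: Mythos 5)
Your overall strategy is sound, and the technical heart of your argument --- enlarging the finite set with the additive skeleton $\{\alpha, \alpha 2, \ldots, \alpha n, l, \alpha n + l\}$ together with the sums witnessing it, so that the isomorphism $k$ furnished by $\alpha <_1 t+1$ is forced to act as $g(0,\alpha,k(\alpha))$ on the relevant points, and then reading off $\beta := k(\alpha)$ --- is exactly the paper's. The organization differs in two places. For the forward direction the paper simply observes that a $<^0$-witness is literally a $<_1$-witness: the definition of $<^0$ demands an isomorphism of the special form $g(0,\alpha,\delta)|_B$, the definition of $<_1$ only asks for \emph{some} isomorphism fixing $B\cap\alpha$, and $g(0,\alpha,\delta)[B] \subset \delta\omega \subset \alpha$; so your detour through Proposition \ref{<less>^0_1st_cof._prop.} followed by a re-run of the proof of Proposition \ref{<less>^0_2nd_cof._prop.} is correct but unnecessary. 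For the reverse direction the paper constructs the $<^0$-witness for each finite $B$ directly (extracting $\delta = k(\alpha)$ and verifying by a case analysis that $g(0,\alpha,\delta)|_B$ is a $(<,<_1,+)$-isomorphism), whereas you factor through the cofinality set of Proposition \ref{<less>^0_2nd_cof._prop.}; your factorization is legitimate and reuses already-proved machinery, at the cost of having to verify the full membership condition $T(0,\alpha,t)\cap\alpha \subset \beta \wedge \beta \leqslant_1 g(0,\alpha,\beta)(t)$ for the single witness $\beta$ above each $\gamma < \alpha$, which you do correctly.

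The one point you flag but do not resolve correctly is $k(\alpha) \in \mathbbm{P}$. No amount of additive structure inside a finite set $Z$ can force $k(\alpha)$ to be additively principal, since principality quantifies over all pairs of smaller ordinals, almost none of which lie in $Z$; and the device of Proposition \ref{prop0.03.11.2008} runs the other way (it uses a maximal principal number below $\alpha$ to refute $\alpha <_1 p+1$, not to certify principality of an image point). The mechanism the paper uses is the $<_1$-structure itself: include $1$ and $\alpha+1$ in the enlarged set, note that $\alpha <_1 \alpha+1$ holds by $\leqslant_1$-connectedness from $\alpha <_1 t+1$, transfer this through $k$ to obtain $k(\alpha) <_1 k(\alpha) + k(1) = k(\alpha)+1$, and conclude $k(\alpha) \in \tmop{Lim}\mathbbm{P} \subset \tmop{Class}(0)$ by Proposition \ref{characterization_of_alpha<less>_1alpha+1}. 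With that repair your argument goes through.
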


\begin{proof}
  \\
  $\Longrightarrow )$. Clear by the definition of $<^0$.

  $\Longleftarrow )$. Suppose $\alpha <_1 t + 1$. \ \ \ \ \ \ \
  {\tmstrong{(*1)}}

  Note (*1) and proposition \ref{characterization_of_alpha<less>_1alpha+1}
  imply that $\alpha \in \tmop{Lim} \mathbbm{P}$ \ \ \ \ \ \ \
  {\tmstrong{(*2)}}.

  Case $t = \alpha$.
  
  Let $B \subset_{\tmop{fin}} t + 1 = \alpha + 1$ be arbitrary. Since $B \cap
  \alpha$ is finite and (2*) holds, then there exists $\delta \in \mathbbm{P}$
  such that $B \cap \alpha \subset \delta$. This way, note \\
  $( \bigcup_{t \in B} T (0, \alpha, t) \cap \alpha) \subset B \cap \alpha
  \subset \beta$, and then, by proposition \ref{Domg(0,a,t)_and_T(0,a,t)}, the
  function $h : B \longrightarrow h [B] \subset \alpha$, $h (x) \assign g (0,
  \alpha, \delta) (x)$ is well defined. Finally, note that from propositions
  \ref{g(0,a,c)_prop1} and \ref{g(0,a,c)|_(a,a2)_is_iso} it follows that the
  function $h$ is an ($<, <_1, +$)-isomorphism with $h|_{\alpha} =
  \tmop{Id}_{\alpha}$.

  Case $t > \alpha$.
  
  Let $B \subset_{\tmop{fin}} t + 1$ be arbitrary. Consider \\
  $C \assign B \cup \{\alpha, 1, \alpha + 1\} \cup \{\alpha m, l, \alpha m + l
  | \alpha n + l \in B \wedge m \in [1, n] \wedge l \in [0, \alpha)\}
  \subset_{\tmop{fin}} t + 1$. So, by (*1), there exists $k : C
  \longrightarrow k [C] \subset \alpha$ an ($<, <_1, +$)-isomorphism with
  $k|_{\alpha} = \tmop{Id}_{\alpha}$. \ \ \ \ \ \ \ {\tmstrong{(*3)}} Then:
  
  1. $\alpha <_1 \alpha + 1 \Longleftrightarrow k (\alpha) <_1 k (\alpha + 1)
  = k (\alpha) + k (1) = k (\alpha) + 1$, that is, \\
  $k(\alpha)$ $\underset{\text{proposition }
  \ref{characterization_of_alpha<less>_1alpha+1}}{\in}$ $\tmop{Lim}
  \mathbbm{P}$.
  
  2. $\forall s \in C \cap \alpha .s < \alpha \Longleftrightarrow s = k (s) <
  k (\alpha)$
  
  3. $\forall n \in [1, \omega) \forall s \in C \cap [\alpha n, \alpha n +
  \alpha) . - \alpha n + s < \alpha \Longleftrightarrow - \alpha n + s = k (-
  \alpha n + s) < k (\alpha) \}$

  From 1, 2 and 3 follows that $\delta \assign k (\alpha) \in \tmop{Class} (0)
  \cap \alpha$, $( \bigcup_{t \in C} T (0, \alpha, t) \cap
  \alpha)$ $\underset{\text{propositions } \ref{g(0,a,c)_prop1} \text{ and }
  \ref{Domg(0,a,t)_and_T(0,a,t)}}{\subset}$ $\delta$ and that the function $H
  : C \longrightarrow H [C] \subset \alpha$, $H (x) \assign g (0, \alpha,
  \delta) (x)$ is well defined. Moreover, by propositions \ref{g(0,a,c)_prop1}
  it follows that $H$ is an $(<, +)$-isomorphism with $H|_{\alpha} =
  \tmop{Id}_{\alpha}$. \ \ \ \ \ \ \ {\tmstrong{(*4)}}

  Now we show that $H$ is also an $<_1$-isomorphism. \ \ \ \ \ \ \
  {\tmstrong{(*5)}}
  
  Let $a, b \in C$ with $a < b$.
  
  {\tmstrong{Case}} $\tmmathbf{a = \alpha \wedge b \in [\alpha n, \alpha n +
  \alpha)} {\tmstrong{for some}}$ $\tmmathbf{n \in [1, \omega)}{\tmstrong{.}}$
  Then $\alpha <_1 t + 1$ and $\alpha < b < t + 1$ imply by
  $\leqslant_1$-connectedness that $\alpha <_1 b$. \\
  On the other hand, note $H (\alpha) = k (\alpha)$ $\underset{\text{by (*3)}}{<_1}$ $k
  (b) = k (\alpha n + ( - \alpha n + b))$ $\underset{\text{by (*3)}}{=}$\\
  $k (\alpha n) + k (- \alpha n + b) = k (\alpha) n + (- \alpha n + b) = H
  (\alpha) n + H (- \alpha n + b)$ $\underset{\text{by (*4)}}{=}$\\
  $H (\alpha n) + H (- \alpha n + b)$ $\underset{\text{by (*4)}}{=}$ $H (\alpha n + ( -
  \alpha n + b)) = H (b)$. \ \ \ \ \ \ \ {\tmstrong{(*6)}}
  
  {\tmstrong{Case}} $\tmmathbf{a, b < \alpha}$. Then $a <_1 b
  \Longleftrightarrow a = H (a) <_1 b = H (b)$.
  
  {\tmstrong{Case}} $\tmmathbf{a < \alpha \leqslant b}$. Then $a <_1
  b$ $\underset{\leqslant_1\text{-connectedness and }
  \leqslant_1\text{-transitivity}}{\Longleftrightarrow}$ $a <_1 \alpha \leqslant_1
  b$ $\underset{\text{by (*3) and (*6)}}{\Longleftrightarrow}$
  $a = H (a) = k (a) <_1 k (\alpha) = H (\alpha) \leqslant_1 k (b) = H (b)$.
  
  {\tmstrong{Case}} $\tmmathbf{\alpha < a < b}$. Then $a <_1 b$ $\underset{\text{by
  proposition } \ref{g(0,a,c)|_(a,a2)_is_iso}}{\Longleftrightarrow}$ $H (\alpha)
  <_1 H (b)$.

  The previous shows that (*5) holds.

  Finally, from (*4), (*5) and the fact that $B \subset C$ we conclude, by
  proposition \ref{iso.restriction} in the appendices section, that the
  function $H|_B : B \longrightarrow H|_B [B] \subset \alpha$, $H|_B (x) = g
  (0, \alpha, \delta) (x)$ is an \\
  ($<, <_1, +$)-isomorphism with $H|_{\alpha} = \tmop{Id}_{\alpha}$.

  All the previous shows that $\alpha <^0 t + 1$.
\end{proof}

\begin{corollary}
  \label{cor_<less>^0_equivalences}Let $\alpha \in \tmop{Class} (0)$ and $t
  \in [\alpha, \alpha \omega)$. The following are equivalent:
  \begin{enumeratenumeric}
    \item $\alpha <^0 t + 1$
    
    \item $\alpha <_1 t + 1$
    
    \item $\alpha \in \tmop{Lim} \{\beta \in \tmop{Class} (0) | T (0, \alpha,
    t) \cap \alpha \subset \beta \wedge \beta \leqslant_1 g (0, \alpha, \beta)
    (t)\}$
  \end{enumeratenumeric}
\end{corollary}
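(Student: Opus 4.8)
The plan is to recognise that Corollary \ref{cor_<less>^0_equivalences} is merely the packaging of three results already proved in this subsection, so the proof consists of chaining them together. Since statement (1), namely $\alpha <^0 t+1$, occurs in each of those results, the cleanest route is to establish the two biconditionals $(1)\Longleftrightarrow(2)$ and $(1)\Longleftrightarrow(3)$ separately and then appeal to transitivity of $\Longleftrightarrow$ to close the triangle.

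For $(1)\Longleftrightarrow(2)$ I would simply invoke Proposition \ref{prop._<less>^0<less>==<gtr><less>_1}: under exactly the standing hypotheses $\alpha \in \tmop{Class}(0)$ and $t \in [\alpha,\alpha\omega)$, that proposition asserts $\alpha <^0 t+1 \Longleftrightarrow \alpha <_1 t+1$. For $(1)\Longleftrightarrow(3)$ I would combine the two fundamental cofinality properties of $<^0$: Proposition \ref{<less>^0_1st_cof._prop.} supplies the direction $(1)\Longrightarrow(3)$ and Proposition \ref{<less>^0_2nd_cof._prop.} supplies $(3)\Longrightarrow(1)$, each stated with precisely the same hypotheses on $\alpha$ and $t$ and with the set in (3) being literally the set $\{\beta \in \tmop{Class}(0) \mid T(0,\alpha,t)\cap\alpha\subset\beta \wedge \beta \leqslant_1 g(0,\alpha,\beta)(t)\}$ appearing there. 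Putting the two biconditionals together gives that $(1)$, $(2)$ and $(3)$ are mutually equivalent, which is the assertion of the corollary.

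I do not expect any genuine obstacle here: no new argument is required, and the only point demanding care is to check that the hypotheses of the three cited propositions coincide verbatim with those of the corollary — which they do. If one wished to avoid using the biconditional form of Proposition \ref{prop._<less>^0<less>==<gtr><less>_1}, an equivalent presentation would be the implication cycle $(1)\Longrightarrow(2)\Longrightarrow(1)\Longrightarrow(3)\Longrightarrow(1)$ drawing on the same three references, but this is strictly redundant. The conceptual point worth underlining is that this corollary records the first instance of the governing pattern — that $<_1$ restricted to a ``Class'' is simultaneously captured by a combinatorial surrogate (here $<^0$) and by a limit/cofinality condition — a pattern the later articles will reproduce at each level $\tmop{Class}(n)$, where the corresponding step is far from trivial and requires the covering theorem.
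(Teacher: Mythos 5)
Your proposal is correct and matches the paper's proof exactly: the corollary is obtained by combining Proposition \ref{prop._<less>^0<less>==<gtr><less>_1} for the equivalence of (1) and (2) with Propositions \ref{<less>^0_1st_cof._prop.} and \ref{<less>^0_2nd_cof._prop.} for the two directions between (1) and (3).
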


\begin{proof}
  Direct from previous propositions \ref{prop._<less>^0<less>==<gtr><less>_1},
  \ref{<less>^0_1st_cof._prop.} and \ref{<less>^0_2nd_cof._prop.}.
\end{proof}

\subsection{A hierarchy induced by $<_1$ and the intervals $[\omega^{\gamma},
\omega^{\gamma + 1})$.}

In this subsection we show theorem \ref{Class(0)_Hierarchy} which is our way
to link ``solutions of the conditions $\alpha <_1 t + 1$, with $\alpha \in
\tmop{Class} (0)$ and $t \in [\alpha, \alpha \omega)$'' (what below is defined
as the $G^0 (t)$ sets) with a thinning procedure (the sets $A^0 (t)$, also
defined below). After that, we will see that, for $\alpha = \kappa$ a regular
non-countable ordinal, the set of ``solutions of the condition $\kappa <_1 t +
1$'' is club in $\kappa$.

\begin{definition}
  By recursion on $([\omega, \infty), <)$, we define \\
  $A^0 : [\omega, \infty)
  \longrightarrow \tmop{Subclasses} (\tmop{OR})$ in the following way: Let $t
  \in [\omega, \infty)$ be arbitrary. Let $\alpha \in \tmop{Class} (0)$ be
  such that $t \in [\alpha, \alpha \omega)$. Then \\
  $A^0 (t) \assign \left\{
  \begin{array}{l}
    (\tmop{LimClass} (0)) \cap (\alpha + 1)  \text{ \tmop{iff} } t = \alpha\\
    \tmop{Lim} A^0 (l + 1) \text{ \tmop{iff} } t = l + 1\\
    \tmop{Lim} \{r \in \tmop{Class} (0) \cap (\alpha + 1) | T (0, \alpha, t)
    \cap \alpha \subset r \wedge \\
    \hspace*{5ex} r \in \bigcap_{s \in \{q \in (\alpha, t) | T (0,
    \alpha, q) \cap \alpha \subset r\}} A^0 (s)\} \text{ \tmop{iff} } t \in
    [\alpha, \alpha \omega) \cap \tmop{Lim}
  \end{array} \right.$
  
  \hspace*{3ex} $= \left\{ \begin{array}{l}
    (\tmop{LimClass} (0)) \cap (\alpha + 1) \text{ \tmop{iff} } t = \alpha\\
    \tmop{Lim} A^0 (l + 1) \text{ \tmop{iff} } t = l + 1\\
    \tmop{Lim} \{r \in \tmop{Class} (0) \cap (\alpha + 1) |- \alpha n + t < r \wedge \\
    r \in \bigcap_{s \in \{q \in (\alpha, t) | T (0, \alpha, q) \cap \alpha
    \subset r\}} A^0 (s)\} \text{ \tmop{iff} } \left\{ \begin{array}{l}
      t \in [\alpha n, \alpha n + \alpha) \cap \tmop{Lim}\\
      \text{\tmop{for} \tmop{some} } n \in [1, \omega)
    \end{array} \right.
  \end{array} \right.$

  On the other hand, we define $G^0 : [\omega, \infty) \longrightarrow
  \tmop{Subclasses} (\tmop{OR})$ as follows: Let $t \in [\omega, \infty)$ be
  arbitrary. Let $\alpha \in \tmop{Class} (0)$ and $n \in [1, \omega)$ be such
  that $t \in [\alpha n, \alpha n + \alpha)$. Then \\
  $G^0 (t) \assign \{\beta \in \tmop{Class} (0) | T (0, \alpha, t) \cap \alpha
  \subset \beta \leqslant \alpha \wedge \beta \leqslant^0 g (0, \alpha, \beta)
  (t) + 1\}$
  
  \ \ \ \ $= \{\beta \in \tmop{Class} (0) | - \alpha n + t < \beta \leqslant
  \alpha \wedge \beta \leqslant^0 g (0, \alpha, \beta) (t) + 1\}$
  
  \ \ \ \ $=$, by proposition \ref{prop._<less>^0<less>==<gtr><less>_1},
  
  \ \ \ \ $= \{\beta \in \tmop{Class} (0) | - \alpha n + t < \beta \leqslant
  \alpha \wedge \beta \leqslant_1 g (0, \alpha, \beta) (t) + 1\}$.
  
  \ \ \ \ $= \{\beta \in \tmop{Class} (0) | T (0, \alpha, t) \cap \alpha \subset
  \beta \leqslant \alpha \wedge \beta \leqslant_1 g (0, \alpha, \beta) (t) +
  1\}$
\end{definition}

\begin{theorem}
  \label{Class(0)_Hierarchy}$\forall t \in [\omega, \infty) .G^0 (t) = A^0
  (t)$.
\end{theorem}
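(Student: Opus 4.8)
The proof is by transfinite induction on $t\in[\omega,\infty)$. Fix throughout the $\alpha\in\tmop{Class}(0)$ and the $n\in[1,\omega)$ with $t\in[\alpha n,\alpha n+\alpha)$ (both uniquely determined by $t$). The pivot of the whole argument is the identity
\[
G^{0}(t)=\tmop{Lim}(H^{0}(t)),\qquad H^{0}(t)\assign\{\beta\in\tmop{Class}(0)\mid T(0,\alpha,t)\cap\alpha\subset\beta\leqslant\alpha\ \wedge\ \beta\leqslant_{1}g(0,\alpha,\beta)(t)\},
\]
i.e.\ $H^{0}(t)$ is $G^{0}(t)$ with the ``$+1$'' removed. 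To prove it, fix $\beta\in\tmop{Class}(0)$ with $T(0,\alpha,t)\cap\alpha\subset\beta\leqslant\alpha$; then $g(0,\alpha,\beta)(t)\in[\beta,\beta\omega)$ and $\beta<g(0,\alpha,\beta)(t)+1$, so $\beta\leqslant_{1}g(0,\alpha,\beta)(t)+1$ is the same as $\beta<_{1}g(0,\alpha,\beta)(t)+1$, which by Proposition~\ref{prop._<less>^0<less>==<gtr><less>_1} is the same as $\beta<^{0}g(0,\alpha,\beta)(t)+1$, which by Corollary~\ref{cor_<less>^0_equivalences} is equivalent to $\beta\in\tmop{Lim}\{r\in\tmop{Class}(0)\mid T(0,\beta,g(0,\alpha,\beta)(t))\cap\beta\subset r\ \wedge\ r\leqslant_{1}g(0,\beta,r)(g(0,\alpha,\beta)(t))\}$. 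By the composition identity $g(0,\beta,r)\circ g(0,\alpha,\beta)=g(0,\alpha,r)$ (immediate from the definition of $g$) and the fact that $T(0,\beta,g(0,\alpha,\beta)(t))\cap\beta$ and $T(0,\alpha,t)\cap\alpha$ impose on any $r<\beta$ the same restriction $-\alpha n+t<r$, this last class coincides with $H^{0}(t)$ below $\beta$; since moreover every limit point of $H^{0}(t)$ automatically lies in $\tmop{Class}(0)$, is $\leqslant\alpha$, and exceeds $T(0,\alpha,t)\cap\alpha$, we conclude $G^{0}(t)=\tmop{Lim}(H^{0}(t))$.

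Granting the pivot, the base case $t=\alpha$ is immediate: $T(0,\alpha,\alpha)\cap\alpha=\{0\}$ and $g(0,\alpha,\beta)(\alpha)=\beta$, so ``$\beta\leqslant_{1}g(0,\alpha,\beta)(\alpha)$'' reads ``$\beta\leqslant_{1}\beta$'' and always holds; thus $H^{0}(\alpha)=\tmop{Class}(0)\cap(\alpha+1)$ and, since $\tmop{Class}(0)$ is closed under suprema, $G^{0}(\alpha)=\tmop{Lim}(\tmop{Class}(0)\cap(\alpha+1))=\tmop{LimClass}(0)\cap(\alpha+1)=A^{0}(\alpha)$. For the successor step $t=l+1$, from $g(0,\alpha,\beta)(l+1)=g(0,\alpha,\beta)(l)+1$ the $\leqslant_{1}$-clause of $H^{0}(l+1)$ becomes ``$\beta\leqslant_{1}g(0,\alpha,\beta)(l)+1$'', which is exactly the $\leqslant_{1}$-clause of $G^{0}(l)$; the two $T$-clauses differ at most by a ``$+1$'' inside ``$\subset\beta$'', which is immaterial for the limit additive principals $\beta\geqslant\omega$ that are the only candidates (and $\beta=1$ lies in neither class). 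Hence $H^{0}(l+1)=G^{0}(l)$, and the pivot together with the induction hypothesis gives $G^{0}(l+1)=\tmop{Lim}(G^{0}(l))=\tmop{Lim}(A^{0}(l))=A^{0}(l+1)$ by the successor clause in the definition of $A^{0}$.

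The limit step $t\in\tmop{Lim}$ is the core. Here I would establish
\[
H^{0}(t)=\{\beta\in\tmop{Class}(0)\cap(\alpha+1)\mid T(0,\alpha,t)\cap\alpha\subset\beta\ \wedge\ \forall s\in(\alpha,t)\,[\,T(0,\alpha,s)\cap\alpha\subset\beta\Rightarrow\beta\in G^{0}(s)\,]\}.
\]
The inclusion ``$\subseteq$'' uses $\leqslant_{1}$-connectedness: if $\beta\leqslant_{1}g(0,\alpha,\beta)(t)$ and $s\in(\alpha,t)$ has $T(0,\alpha,s)\cap\alpha\subset\beta$, then, since $g(0,\alpha,\beta)$ is strictly increasing and $g(0,\alpha,\beta)(t)$ is a limit ordinal, $g(0,\alpha,\beta)(s)+1\leqslant g(0,\alpha,\beta)(t)$, whence $\beta\leqslant_{1}g(0,\alpha,\beta)(s)+1$, i.e.\ $\beta\in G^{0}(s)$. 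The inclusion ``$\supseteq$'' uses $\leqslant_{1}$-continuity together with the continuity of $g(0,\alpha,\beta)$: the ordinals $g(0,\alpha,\beta)(s)+1$, over $s\in(\alpha,t)$ with $T(0,\alpha,s)\cap\alpha\subset\beta$, are cofinal in $g(0,\alpha,\beta)(t)$ — this includes the degenerate case $t=\alpha n$ (hence $n\geqslant2$), where one takes $s=\alpha(n-1)+\xi$ with $0<\xi<\beta$, whose images $\beta(n-1)+\xi$ are cofinal in $g(0,\alpha,\beta)(t)=\beta n$ even though the $s$ are not cofinal in $t$ — so from $\beta\leqslant_{1}g(0,\alpha,\beta)(s)+1$ for cofinally many such $s$ and $\leqslant_{1}$-continuity one obtains $\beta\leqslant_{1}g(0,\alpha,\beta)(t)$. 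Substituting the induction hypothesis $G^{0}(s)=A^{0}(s)$ ($s<t$) into the displayed description turns it into precisely the class whose $\tmop{Lim}$ is, by definition, $A^{0}(t)$; therefore $G^{0}(t)=\tmop{Lim}(H^{0}(t))=A^{0}(t)$, completing the induction.

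The routine parts are the composition identity for the maps $g(0,\cdot,\cdot)$ and the bookkeeping of the sets $T(0,\alpha,\cdot)$ and of the boundary values $\beta=1$ and $\beta=\alpha$. The genuine obstacle is the ``$\supseteq$'' direction of the limit step: the $\leqslant_{1}$-continuity argument must be run on the image side, exploiting that it is the values $g(0,\alpha,\beta)(s)$, not the arguments $s$ themselves, that approximate $g(0,\alpha,\beta)(t)$ cofinally — which is exactly what rescues the case $t=\alpha n$, where $(\alpha,t)$ contains no sequence cofinal in $t$ that lies inside the domain of $g(0,\alpha,\beta)$.
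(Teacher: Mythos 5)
Your proof is correct and takes essentially the same route as the paper's: the same induction with the same three cases, the same use of Corollary \ref{cor_<less>^0_equivalences} to strip the ``$+1$'' (your pivot identity is exactly the paper's step (*0), which the paper re-derives inside the successor and limit cases rather than stating once), and the same $\leqslant_1$-connectedness/continuity argument in the limit case, including the correct treatment of the delicate subcase $t=\alpha n$ where cofinality must be read off on the image side via the sequence $\beta(n-1)+\xi$.
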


\begin{proof}
  We show $\forall t \in [\omega, \infty) .G^0 (t) = A^0 (t)$ by induction on
  $([\omega, \infty), <)$.

  Let $t \in [\omega, \infty)$ be arbitrary and consider $\alpha \in
  \tmop{Class} (0)$ and $n \in [1, \omega)$ such that $t \in [\alpha n, \alpha
  n + \alpha)$.

  Suppose $\forall s \in t \cap [\omega, \infty) .G^0 (s) = A^0 (s)$. \ \ \ \
  \ \ \ {\tmstrong{(IH)}}

  {\tmstrong{Case}} $\tmmathbf{t = \alpha}$.
  
  Then $G^0 (\alpha) = \{\beta \in \tmop{Class} (0) | - \alpha + \alpha <
  \beta \leqslant \alpha \wedge \beta \leqslant_1 g (0, \alpha, \beta)
  (\alpha) + 1\} =$
  
  \ \ \ \ \ \ \ \ \ \ \ \ \ \ \ $= \{\beta \in \tmop{Class} (0) | \alpha
  \geqslant \beta \leqslant_1 \beta + 1\}$ $\underset{\text{proposition }
  \ref{characterization_of_alpha<less>_1alpha+1}}{=}$ $(\tmop{Lim}
  \tmop{Class} (0)) \cap (\alpha + 1) = A^0 (\alpha)$.

  {\tmstrong{Case}} $\tmmathbf{t = l + 1}$ {\tmstrong{for some}} $\tmmathbf{l
  \in [\alpha n, \alpha n + \alpha)}$.
  
  Then $G^0 (l + 1) = \{\beta \in \tmop{Class} (0) | - \alpha n + (l + 1) <
  \beta \leqslant \alpha \wedge \beta \leqslant_1 g (0, \alpha, \beta) (l + 1)
  + 1\}$ $\underset{\tmop{corollary} \ref{cor_<less>^0_equivalences}}{=}$\\
  $\{\beta \in \tmop{Class} (0) | - \alpha n + (l + 1) < \beta \leqslant
  \alpha \wedge$\\
  \ \ \ \ \ $\beta \in \tmop{Lim} \{\gamma \in \tmop{Class} (0) | - \beta n +
  g (0, \alpha, \beta) (l + 1) < \gamma \wedge \gamma
  \leqslant_1 g (0, \beta, \gamma) (g (0, \alpha, \beta) (l
  + 1))\} \}=$\\
  $\{\beta \in \tmop{Class} (0) | - \alpha n + (l + 1) < \beta \leqslant
  \alpha \wedge$\\
  \ \ \ \ \ $\beta \in \tmop{Lim} \{\gamma \in \tmop{Class} (0) | - \beta n +
  (\beta n + ( - \alpha n + l + 1)) < \gamma \wedge$\\
  \ \ \ \ \ \ \ \ \ \ \ \ \ \ \ \ \ \ \ \ \ $\gamma \leqslant_1 \gamma n + (-
  \beta n +  (\beta n + ( - \alpha n + l + 1)))\}\}=$\\
  $\{\beta \in \tmop{Class} (0) | - \alpha n + (l + 1) < \beta \leqslant
  \alpha \wedge$\\
  \ \ \ \ \ $\beta \in \tmop{Lim} \{\gamma \in \tmop{Class} (0) | - \alpha n
  + (l + 1) < \gamma \wedge \gamma \leqslant_1 \gamma n + ( - \alpha n + l +
  1)\}\}=$\\
  $\tmop{Lim} \{\gamma \in \tmop{Class} (0) | - \alpha n + (l + 1) < \gamma
  \leqslant \alpha \wedge \gamma \leqslant_1 \gamma n + ( - \alpha n + l +
  1)\}=$\\
  $\tmop{Lim} \{\gamma \in \tmop{Class} (0) | - \alpha n + (l + 1) < \gamma
  \leqslant \alpha \wedge \gamma \leqslant_1 g (0, \alpha, \gamma) (l + 1)\}
  =$\\
  $\tmop{Lim} \{\gamma \in \tmop{Class} (0) | - \alpha n + l < \gamma
  \leqslant \alpha \wedge \gamma \leqslant_1 g (0, \alpha, \gamma) (l) + 1\}
  =$\\
  Lim$G^0 (l)$ $\underset{\text{by (IH)}}{=}$ Lim$A^0 (l) = A^0 (l + 1)$.

  {\tmstrong{Case}} $\tmmathbf{\alpha < t \in [\alpha n, \alpha n + \alpha)
  \cap \tmop{Lim}}$.
  
  In order to show $G^0 (t) = A^0 (t)$, we make some preparations first. Note
  \\
  $G^0 (t) = \{\beta \in \tmop{Class} (0) | - \alpha n + t < \beta \leqslant
  \alpha \wedge \beta \leqslant_1 g (0, \alpha, \beta) (t) + 1\} =$, as in the
  previous case,
  
  \ \ $= \tmop{Lim} \{\gamma \in \tmop{Class} (0) | - \alpha n + l < \gamma
  \leqslant \alpha \wedge \gamma \leqslant_1 g (0, \alpha, \gamma) (t)\}$. \ \
  \ \ \ \ \ {\tmstrong{(*0)}}

  On the other hand, let's show \\
  $\forall \xi \in \tmop{Class} (0) . - \alpha n + t < \xi \leqslant \alpha
  \wedge \xi \leqslant_1 g (0, \alpha, \gamma) (t) \}\Longrightarrow$\\
  \hspace*{15ex} $ \xi \in
  \bigcap_{s \in \{q \in (\alpha, t) | T (0, \alpha, q) \cap \alpha \subset
  \xi\}} A^0 (s)$ \ \ \ \ \ \ \ {\tmstrong{(*1)}}

  Let $\xi \in \tmop{Class} (0)$ be such that $- \alpha n + t < \xi \leqslant
  \alpha \wedge \xi \leqslant_1 g (0, \alpha, \gamma) (t) \}$. \ \ \ \ \ \ \
  {\tmstrong{(*2)}} \\
  Let $s \in \{q \in (\alpha, t) | T (0, \alpha, q) \cap \alpha \subset \xi\}$
  be arbitrary and let $m \in [1, n]$ be such that \\
  $s \in [\alpha m, \alpha m + \alpha)$. Then clearly $- \alpha m + s < \xi
  \leqslant \alpha$ and \\
  $\xi \leqslant \xi m + (- \alpha m + s + 1) \leqslant \xi n + (- \alpha n +
  t) = g (0, \alpha, \gamma) (t)$; the latter implies, by (*2) and
  $\leqslant_1$-connectedness, $\xi \leqslant_1 \xi m + (- \alpha m + s + 1) =
  (\xi m + ( - \alpha m + s)) + 1 = g (0, \alpha, \gamma) (s) + 1$. This shows
  $\xi \in \{\gamma \in \tmop{Class} (0) | - \alpha m + s < \gamma \leqslant
  \alpha \wedge \gamma \leqslant_1 g (0, \alpha, \gamma) (s) + 1\} = G^0
  (s)$ $\underset{\text{by our (IH)}}{=}$ $A^0 (s)$ and since this was done for
  arbitrary $s \in \{q \in (\alpha, t) | T (0, \alpha, q) \cap \alpha \subset
  \xi\}$, it follows \\
  $\xi \in \bigcap_{s \in \{q \in (\alpha, t) | T (0, \alpha, q) \cap \alpha
  \subset \xi\}} A^0 (s)$. Hence (*1) holds.

  Now we show $\{\gamma \in \tmop{Class} (0) | - \alpha n + l < \gamma
  \leqslant \alpha \wedge \gamma \leqslant_1 g (0, \alpha, \gamma) (t)\} =$ 
  $\{r \in \tmop{Class} (0) \cap (\alpha + 1) | - \alpha n + t < r \in
  \bigcap_{s \in \{q \in (\alpha, t) | T (0, \alpha, q) \cap \alpha \subset
  r\}} A^0 (s)\}$ \ \ \ \ \ \ \ {\tmstrong{(*3)}}

  Note from (*1) follows immediately that the contention $'' \subset''$ of
  (*3) holds. Let's see that the contention $'' \supset''$ also holds:

  Let $\beta \in \{r \in \tmop{Class} (0) \cap (\alpha + 1) | - \alpha n + t
  < r \in \bigcap_{s \in \{q \in (\alpha, t) | T (0, \alpha, q) \cap \alpha
  \subset r\}} A^0 (s)\}$ be arbitrary. Then $\beta \in \tmop{Class} (0)
  \wedge - \alpha n + l < \beta \leqslant \alpha$ \ \ \ \ \ \ \
  {\tmstrong{(*4)}} and \\
  $\beta \in \bigcap_{s \in \{q \in (\alpha, t) | T (0, \alpha, q) \cap \alpha
  \subset \beta\}} A^0 (s)$ $\underset{\text{by (IH)}}{=}$ $\bigcap_{s \in \{q \in
  (\alpha, t) | T (0, \alpha, q) \cap \alpha \subset \beta\}} G^0 (s) =$\\
  $\bigcap_{s \in \{q \in (\alpha, t) | T (0, \alpha, q) \cap \alpha \subset
  \beta\}} \{\gamma \in \tmop{Class} (0) | T (0, \alpha, s) \cap \alpha
  \subset \gamma \leqslant \alpha \wedge \gamma \leqslant_1 g (0, \alpha,
  \gamma) (s)\}$ \ \ \ \ \ \ \ {\tmstrong{(*5)}}.\\
  This way, for the sequences $(\delta_s)_{s \in I}$ and $(\xi_s)_{s \in I}$
  defined as\\
  $I : = \left\{ \begin{array}{l}
    (0, - \alpha n + t) \text{ \tmop{iff} } t > \alpha n\\
    (0, \beta) \text{ \tmop{iff} } t = \alpha n
  \end{array} \right.$, \\
  $\delta_s \assign \left\{ \begin{array}{l}
    \alpha n + s \text{ \tmop{iff} } t > \alpha n\\
    \alpha (n - 1) + s \text{ \tmop{iff} } t = \alpha n
  \end{array} \right.$ \\ and \\  
  $\xi_s \assign \left\{ \begin{array}{l}
    \beta n + s \text{\tmop{iff}} t > \alpha n\\
    \beta (n - 1) + s \text{\tmop{iff}} t = \alpha n
  \end{array} \right.$, \\
  we have that, by (*4) and (*5), \\
  $\forall s \in I.T (0, \alpha, \delta_s) \cap \alpha \subset \beta
  \leqslant_1 g (0, \alpha, \beta) (\delta_s) = \xi_s$ and \\
  $\xi_s$ $\underset{cof}{\longhookrightarrow}$ $\left. \left\{ \begin{array}{l}
    \beta n + (- \alpha n + t) \text{ \tmop{iff} } t > \alpha n\\
    \beta n \text{ \tmop{iff} } t = \alpha n
  \end{array} \right. \right\} = g (0, \alpha, \beta) (t)$. From all this and
  using $\leqslant_1$-continuity, we conclude $\alpha \geqslant \beta \in
  \tmop{Class} (0) \wedge - \alpha n + t < \beta \leqslant_1 = g (0, \alpha,
  \beta) (t)$, that is,
  $\beta \in \{\gamma \in \tmop{Class} (0) | - \alpha n + t < \gamma \leqslant
  \alpha \wedge \gamma \leqslant_1 g (0, \alpha, \gamma) (t)\} = G^0 (t)$.
  Since this was done for arbitrary $\beta \in \{r \in \tmop{Class} (0) \cap
  (\alpha + 1) | - \alpha n + t < r \in \bigcap_{s \in \{q \in (\alpha, t) | T
  (0, \alpha, q) \cap \alpha \subset r\}} A^0 (s)\}$, then $'' \supset''$ of
  (*3) also holds.

  Finally, it is now very easy to see that $G^0 (t) = A^0 (t)$ holds:\\
  $G^0 (t)$ $\underset{\text{by (*0)}}{=}$ $\tmop{Lim} \{\gamma \in \tmop{Class} (0) |
  - \alpha n + t < \gamma \leqslant \alpha \wedge \gamma \leqslant_1 g (0,
  \alpha, \gamma) (t)\}$ $\underset{\text{by (*3)}}{=}$
  
  \ \ \ \ $= \tmop{Lim} \{r \in \tmop{Class} (0) \cap (\alpha + 1) | - \alpha
  n + t < r \in \bigcap_{s \in \{q \in (\alpha, t) | T (0, \alpha, q) \cap
  \alpha \subset r\}} A^0 (s)\}$\\
  \hspace*{5ex} $= A^0 (t)$.
\end{proof}

\begin{proposition}
  Let $\kappa$ be a regular non-countable ordinal. \\
  Then $\forall t \in [\kappa, \kappa \omega) .A^0 (t)$ is closed unbounded in $\kappa$.
\end{proposition}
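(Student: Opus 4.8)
The plan is to prove, by induction along $([\kappa,\kappa\omega),<)$, that $A^0(t)$ is closed unbounded in $\kappa$ (``club in $\kappa$'' for short). Before the induction I would record a few standing facts. First, a regular uncountable ordinal $\kappa$ is a cardinal with $\kappa=\omega^\kappa\in\mathbbm{P}=\tmop{Class}(0)$, and $\tmop{Class}(0)\cap\kappa=\{\omega^\gamma\mid\gamma<\kappa\}$ is club in $\kappa$ (closed because $\mathbbm{P}$ is a closed class, unbounded because $\kappa=\sup_{\gamma<\kappa}\omega^\gamma$). Second, I would invoke the standard normality facts for the club filter on $\kappa$: the set of limit points below $\kappa$ of a club in $\kappa$ is again a club in $\kappa$; a finite intersection of clubs in $\kappa$ is a club in $\kappa$; and the diagonal intersection $\Delta_{\rho<\kappa}C_\rho$ of a $\kappa$-sequence of clubs in $\kappa$ is a club in $\kappa$. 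Third, $\tmop{Lim}(X)$ is a closed class for every class $X$. Finally, for every $t\in[\kappa,\kappa\omega)$ the unique $\alpha\in\tmop{Class}(0)$ with $t\in[\alpha,\alpha\omega)$ is $\kappa$, so $A^0(t)$ is computed from the defining clause with $\alpha=\kappa$ and is a subclass of $\kappa+1$.

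Fix $t\in[\kappa,\kappa\omega)$ and assume $A^0(s)$ is a club in $\kappa$ for every $s\in[\kappa,\kappa\omega)$ with $s<t$. In the base case $t=\kappa$ we have $A^0(\kappa)=(\tmop{Lim}\tmop{Class}(0))\cap(\kappa+1)$, which below $\kappa$ is the set of limit points of the club $\tmop{Class}(0)\cap\kappa$, hence a club in $\kappa$. In the successor case $t=l+1$, $t$ is a successor and therefore $t>\kappa$, so $l\geqslant\kappa$, giving $l\in[\kappa,\kappa\omega)$ and $l<t$; by the induction hypothesis $A^0(l)$ is a club in $\kappa$, and $A^0(l+1)=\tmop{Lim}A^0(l)$ is the set of limit points of that club, hence a club in $\kappa$.

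The heart of the argument is the limit case $\kappa<t\in[\kappa n,\kappa n+\kappa)\cap\tmop{Lim}$. Put $\gamma_0\assign-\kappa n+t<\kappa$, so $T(0,\kappa,t)\cap\kappa=\{\gamma_0\}$; by the definition $A^0(t)=\tmop{Lim}(D)$, where $D$ is the class of $\beta\in\tmop{Class}(0)\cap(\kappa+1)$ with $\gamma_0<\beta$ lying in $\bigcap\{A^0(s)\mid s\in(\kappa,t),\ T(0,\kappa,s)\cap\kappa\subset\beta\}$. Since $\tmop{Lim}(D)$ is automatically closed, it suffices to show $D$ is unbounded in $\kappa$, and I would in fact establish that $D\cap\kappa$ is a club in $\kappa$. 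The device is to reparametrise $s\in(\kappa,t)$ as $s=\kappa m+\rho$ with $m\in[1,n]$ and $\rho<\kappa$, so that the condition $T(0,\kappa,s)\cap\kappa\subset\beta$ becomes simply $\rho<\beta$. Setting $B_\rho\assign\bigcap\{A^0(\kappa m+\rho)\mid m\in[1,n],\ \kappa<\kappa m+\rho<t\}$ (understood as $\tmop{OR}$ when no such $m$ exists), every $A^0(\kappa m+\rho)$ occurring here lies in the scope of the induction hypothesis, because $\kappa m+\rho\in[\kappa,\kappa\omega)$ and $\kappa m+\rho<t$; hence each $B_\rho$, being a finite intersection of clubs in $\kappa$, is a club in $\kappa$. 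Then, for $\beta<\kappa$, membership of $\beta$ in the intersection defining $D$ is exactly ``$\beta\in B_\rho$ for all $\rho<\beta$'', so
\[ D\cap\kappa=\bigl(\tmop{Class}(0)\cap\kappa\bigr)\cap(\gamma_0,\kappa)\cap\Delta_{\rho<\kappa}B_\rho, \]
an intersection of clubs in $\kappa$, hence a club in $\kappa$; therefore $A^0(t)=\tmop{Lim}(D)$ is a club in $\kappa$, completing the induction.

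The one genuine obstacle is this limit step. Naively $D$ is obtained by intersecting $\kappa$ many of the already-constructed clubs $A^0(s)$, $s\in(\kappa,t)$, and such an intersection need not be a club in $\kappa$; what saves it is that the side conditions are triangular --- the set of $s$ that constrain a candidate $\beta$ grows with $\beta$ precisely in the shape of a diagonal intersection --- so the normality of the club filter applies. The remaining points are bookkeeping: checking that $s\leftrightarrow(m,\rho)$ is a bijection onto the relevant index set, that each $A^0(\kappa m+\rho)$ entering a $B_\rho$ falls under the induction hypothesis, and that $D\cap\kappa$ indeed equals the displayed intersection, the tail condition $\beta>\gamma_0$ and the requirement $\beta\in\tmop{Class}(0)$ being harmless extra clubs.
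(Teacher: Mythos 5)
Your proof is correct and follows exactly the route the paper indicates (induction on $([\kappa,\kappa\omega),<)$ together with the standard properties of closed unbounded classes), the paper itself giving only a one-line sketch. You correctly supply the one detail the sketch hides, namely that in the limit case the triangular side condition $T(0,\kappa,s)\cap\kappa\subset\beta$ turns the intersection over $s\in(\kappa,t)$ into a diagonal intersection, so normality of the club filter on the regular uncountable $\kappa$ is what makes the step go through.
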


\begin{proof}
  By induction on $([\kappa, \kappa \omega), <)$. One needs to work a little
  bit with the usual properties of closed unbounded classes.
\end{proof}

As a final result here, we show that there are ordinals $\alpha \in
\tmop{Class} (0)$ such that $\alpha <_1 \alpha \omega$.

\begin{proposition}
  Let $\kappa$ be a regular non-countable ordinal and $\alpha \assign \min
  \tmop{Class} (0) = \omega$. Then
  \begin{enumeratenumeric}
    \item $\bigcap_{t \in [\kappa, \kappa \omega) \wedge T (0, \kappa, t) \cap
    \kappa \subset \alpha} A^0 (t) = \{\gamma \in \tmop{Class} (0) \cap
    (\kappa + 1) | \gamma <_1 \gamma \omega\}$.
    
    \item $\{\gamma \in \tmop{Class} (0) | \gamma <_1 \gamma \omega\}$ is
    closed unbounded in $\kappa$.
  \end{enumeratenumeric}
\end{proposition}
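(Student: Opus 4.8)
The plan is to reduce everything to Theorem~\ref{Class(0)_Hierarchy}, the $\leqslant_1$-connectedness and $\leqslant_1$-continuity of Proposition~\ref{connectedness_and_continuity}, and the preceding proposition asserting that each $A^0(t)$, $t\in[\kappa,\kappa\omega)$, is closed unbounded in $\kappa$.

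Fix $\kappa$ regular uncountable and $\alpha=\omega=\min\tmop{Class}(0)$. First I would pin down the index set of the intersection in part~1: for $t\in[\kappa,\kappa\omega)$ there is a unique $n\in[1,\omega)$ with $t\in[\kappa n,\kappa n+\kappa)$, and then $T(0,\kappa,t)\cap\kappa=\{-\kappa n+t\}$, so the condition $T(0,\kappa,t)\cap\kappa\subset\alpha=\omega$ means exactly $-\kappa n+t<\omega$. Hence the index set is $\{\kappa n+l\mid n\in[1,\omega),\ l\in\omega\}$, which is countable. By Theorem~\ref{Class(0)_Hierarchy} we may replace each $A^0(t)$ by $G^0(t)$ throughout.

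Next I would unwind $G^0(\kappa n+l)$ for $l<\omega$, taking $\kappa$ as the additive principal appearing in the definition of $G^0$. For $\beta\in\tmop{Class}(0)\cap(\kappa+1)$ one has $\beta\geqslant\omega>l$, so the requirement $T(0,\kappa,\kappa n+l)\cap\kappa=\{l\}\subset\beta$ is automatic, and $g(0,\kappa,\beta)(\kappa n+l)=\beta n+l$ by Proposition~\ref{g(0,a,c)_prop1}; therefore $\beta\in G^0(\kappa n+l)\Longleftrightarrow\beta\leqslant_1\beta n+l+1$. Since each $G^0(t)\subseteq\tmop{Class}(0)\cap(\kappa+1)$, it thus suffices to show that for $\beta\in\tmop{Class}(0)\cap(\kappa+1)$,
\[
\bigl[\,\forall n\in[1,\omega)\ \forall l\in\omega.\ \beta\leqslant_1\beta n+l+1\,\bigr]\Longleftrightarrow\beta<_1\beta\omega .
\]
The implication ``$\Longleftarrow$'' is immediate from $\leqslant_1$-connectedness, since $\beta n+l+1\in[\beta,\beta\omega)$ (as $\beta\in\mathbbm{P}$ gives $\beta\geqslant\omega$, hence $l+1<\beta$ and $\beta<\beta\omega$); the implication ``$\Longrightarrow$'' follows by $\leqslant_1$-continuity applied to the sequence $(\beta n+1)_{n\in[1,\omega)}$, which is cofinal in $\beta\omega$. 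This establishes part~1.

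For part~2, intersecting the identity of part~1 with $\kappa$ gives
\[
\{\gamma\in\tmop{Class}(0)\mid\gamma<_1\gamma\omega\}\cap\kappa=\bigcap_{t}\bigl(A^0(t)\cap\kappa\bigr),
\]
a countable intersection (by the first paragraph) of classes each of which is closed unbounded in $\kappa$ by the preceding proposition; since $\kappa$ is regular and uncountable, a countable intersection of closed unbounded subsets of $\kappa$ is again closed unbounded, so the left-hand side is closed unbounded in $\kappa$. The only point requiring any care is the bookkeeping involved in evaluating $G^0(\kappa n+l)$ and $g(0,\kappa,\beta)(\kappa n+l)$; the remaining ingredients — the cofinal sequence witnessing $\leqslant_1$-continuity and the countable intersection of clubs — are entirely routine.
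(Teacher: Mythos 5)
Your proof is correct and, for part 1, follows essentially the same route as the paper: replace each $A^0(t)$ by $G^0(t)$ via Theorem~\ref{Class(0)_Hierarchy}, unwind $g(0,\kappa,\beta)(\kappa n+l)=\beta n+l$, and then use $\leqslant_1$-continuity along the cofinal sequence $(\beta n+1)_{n\in[1,\omega)}$ for one inclusion and $\leqslant_1$-connectedness for the other. The paper leaves part 2 to the reader; your argument there (a countable index set, each $A^0(t)$ club in $\kappa$ by the preceding proposition, and a countable intersection of clubs in a regular uncountable $\kappa$ being club) is the intended one and is correct.
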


\begin{proof}
  Let $\kappa$ and $\alpha$ be as stated.
  
  {\noindent}1.\\  
  To show $\bigcap_{t \in [\kappa, \kappa \omega) \wedge T (0, \kappa, t) \cap
  \kappa \subset \alpha} A^0 (t) \subset \{\gamma \in \tmop{Class} (0) \cap
  (\kappa + 1) | \gamma \leqslant_1 \gamma \omega\}$. \ \ \ \ \ {\tmstrong{(*0)}}

  Let $\beta \in \bigcap_{t \in [\kappa, \kappa \omega) \wedge T (0, \kappa,
  t) \cap \kappa \subset \alpha} A^0 (t) = \bigcap_{t \in [\kappa, \kappa
  \omega) \wedge T (0, \kappa, t) \cap \kappa \subset \alpha} G^0 (t) =$\\
  $\bigcap_{t \in [\kappa, \kappa \omega) \wedge T (0, \kappa, t) \cap \kappa
  \subset \alpha} \{\gamma \in \tmop{Class} (0) | T (0, \kappa, t) \cap \kappa
  \subset \gamma \leqslant \kappa \wedge \gamma \leqslant_1 g (0, \kappa,
  \gamma) (t) + 1\}$. Notice from this follows that $\forall n \in [1, \omega)
  .T (0, \kappa, \kappa n) \cap \kappa \subset \alpha \leqslant \beta
  \leqslant \kappa \wedge \beta \leqslant_1 g (0, \kappa, \beta) (\kappa n) +
  1 = \beta n + 1$; therefore, since the sequence $(\beta n + 1)_{n \in [1,
  \omega)}$ is cofinal in $\beta \omega$, we get, by $\leqslant_1$-continuity,
  $\kappa \geqslant \beta \leqslant_1 \beta \omega$. Since this was done for
  arbitrary $\beta \in \bigcap_{t \in [\kappa, \kappa \omega) \wedge T (0,
  \kappa, t) \cap \kappa \subset \alpha} A^0 (t)$, then (*0) follows.

  To show\\
  $\bigcap_{t \in [\kappa, \kappa \omega) \wedge T (0, \kappa, t) \cap
  \kappa \subset \alpha} A^0 (t) \supset \{\gamma \in \tmop{Class} (0) \cap
  (\kappa + 1) | \gamma \leqslant_1 \gamma \omega\}$. \ \ \ \ \ \ \ {\tmstrong{(*1)}}

  Let $\beta \in \{\gamma \in \tmop{Class} (0) \cap (\kappa + 1) | \gamma
  \leqslant_1 \gamma \omega\}$. \ \ \ \ \ \ \ {\tmstrong{(*2)}}
  
  Let $t \in [\kappa, \kappa \omega) \wedge T (0, \kappa, t) \cap \kappa
  \subset \alpha$ be arbitrary and let $n \in [1, \omega)$ be such that $t \in
  [\kappa n, \kappa n + \kappa)$. Then $T (0, \kappa, t) \cap \kappa =\{-
  \kappa n + t\} \subset \alpha \leqslant \beta \leqslant \beta n + (- \kappa
  n + t) + 1 < \beta (n + 1) < \beta \omega$ and then, by (*2) and
  $\leqslant_1$-connectedness, we get $T (0, \kappa, t) \cap \kappa \subset
  \beta \leqslant \kappa \wedge \beta \leqslant_1 \beta n + (- \kappa n + t) +
  1 = g (0, \kappa, \beta) (t) + 1$, that is, $\beta \in G^0 (t) = A^0 (t)$.
  Since this was done for arbitrary $\beta \in \{\gamma \in \tmop{Class} (0)
  \cap (\kappa + 1) | \gamma \leqslant_1 \gamma \omega\}$ and for arbitrary $t
  \in [\kappa, \kappa \omega) \wedge T (0, \kappa, t) \cap \kappa \subset
  \alpha$, then we have shown that (*1) holds.

  Hence, by (*0) and (*1) the theorem holds.

  {\noindent}2.\\
  Left to the reader.
\end{proof}

{\newpage}

\appendix\section*{Appendix}

The main goal of this appendix is to clarify how our definition of 
$\alpha <_1 \beta$ based on the notion of isomorfisms 
is equivalent to the assertion that $(\alpha, <, +, <_1)$ is a
$\Sigma_1$-substructure of $(\beta, <, +, <_1)$. For this, it will be
important the kind of language where one works. In the end, we will achieve our goal 
by showing theorem \ref{A<less>_s_1B_equivalence} which,
given certain language $\mathcal{L}$ and corresponding structures $\mathcal{A}$ and
$\mathcal{B}$ for it, characterizes when $\mathcal{A}$ is a $\Sigma_1$
substructure of $\mathcal{B}$. So let us first introduce all the notions that we need.

\section{The language $\mathcal{L}$}

\subsection{Sintax}

In what follows, let us denote as $\bar{R}$, $\bar{f}$ and $\bar{c}$ to a
finite set (or list) of relational, functional and constant symbols,
respectively. As usual, we call the triad $\left\langle \bar{R}, \bar{f},
\bar{c} \right\rangle$ signature.

The terms of $\mathcal{L}$ are build up based on an numerable set of
individual variables $\left\{ w_1, w_2, \ldots \right\}$ and on the individual
constant symbols $\bar{c}$ as follows

\begin{definition}
  (Atomic terms and terms). The atomic terms and terms of our language
  $\mathcal{L}$ with signature $\left\langle \bar{R}, \bar{f}, \bar{c}
  \right\rangle$ are defined as:
  
  {\tmstrong{Atomic terms}}
  
  - Every variable $w$ in $\left\{ w_1, w_2, \ldots \right\}$ is an atomic
  term.
  
  - Every constant $c$ in $\bar{c}$ is an atomic term.
  
  {\tmstrong{Terms}}
  
  - Every atomic term is a term
  
  - If $f$ is a functional symbol of arity $n$ and $s_1, \ldots, s_n$ are
  atomic terms, then $f \left( s_1, \ldots, s_n \right)$ is a term.
\end{definition}

On the other hand, the formulas of $\mathcal{L}$ are given in the following
way

\begin{definition}
  (Atomic formulas and formulas).
  
  {\tmstrong{Atomic formulas}}
  
  - If $R$ is a relational constant symbol of arity $n$ and and $s_1, \ldots,
  s_n$ are terms, then $R \left( s_1, \ldots, s_n \right)$ is an atomic
  formula.
  
  - If $t_1$ is a term and $t_2$ is a constant or a variable, then $t_1
  \approx t_2$ and $t_2 \approx t_1$ are atomic formulas.
  
  {\tmstrong{Formulas}}
  
  - Every atomic formula is a formula
  
  - Given the formulas $F_1$ and $F_2$ and the variable $w$, the following are
  formulas: $F_1 \vee F_2$, $F_1 \wedge F_2$, $F_1 \rightarrow F_2$, $\neg
  F_1$, $\exists w.F_1$, $\forall w.F_1$
\end{definition}

{\newpage}

\subsection{Semantics}

\begin{definition}
  (Non-closed structures). Let $\mathcal{A}= \langle A, \bar{f}^{\mathcal{A}},
  \bar{R}^{\mathcal{A}}, \bar{c}^{\mathcal{A}} \rangle$ be a structure for our
  language $\mathcal{L}$ with the peculiarity that for a universe $U \supset
  A$, the functions $f^{\mathcal{A}} \in \bar{f}^{\mathcal{A}}$ have domain
  and codomain $U$.

  Our interst in these structures lies in the fact that, for $a \in A$,
  $f^{\mathcal{A}} \left( a \right)$ not necessarily belongs to $A$. We will
  call $\mathcal{A}$ a non-closed structure for the language $\mathcal{L}$.
  (On the other hand, for $R^{\mathcal{A}} \in \bar{R}^{\mathcal{A}}$ of arity
  $n$ and $c^{\mathcal{A}} \in \bar{c}^{\mathcal{A}}$, we require
  $c^{\mathcal{A}} \in A$ and $R^{\mathcal{A}} \subset A^n$).
\end{definition}

As usual, for a list of variables $\bar{w} = \left( w_1, \ldots, w_n \right)$
and a list of values $\bar{l} = \left( l_1, \ldots, l_n \right)$ in $A$, we
denote as $[ \bar{w} \assign \bar{l}] : \{w_1, \ldots, w_n \} \rightarrow A$
to the assignment of the variables $\bar{w}$ to the values $\bar{l}$ in $A$.
Moreover, for a term $t$, we denote as $t [ \bar{w} \assign \bar{l}]$ to the
usual application of the assignment $[ \bar{w} \assign \bar{l}]$ to the term
$t$. {\tmstrong{Note this value might not lie in $A$.}}

With respect to the satisfiability notions, we treat equality in $\mathcal{L}$
in the cannonical way: it has a fixed interpretation, namely, the identity. In
general, we treat the satisfaction of a formula $\mathcal{L}$ by a non-closed
structure exaclty in the same way as it is done with structures.

\section{Isomorphisms}

\begin{definition}
  Let $\mathcal{A}= \langle A, \bar{f}^{\mathcal{A}}, \bar{R}^{\mathcal{A}},
  \bar{c}^{\mathcal{A}} \rangle$ and $\mathcal{B}= \langle B,
  \bar{f}^{\mathcal{B}}, \bar{R}^{\mathcal{B}}, \bar{c}^{\mathcal{B}} \rangle$
  be non-closed structures of a language $\mathcal{L}$. An isomorphism between
  them is a function $h : A \longrightarrow B$ such that:
  
  + $h$ is a bijection.
  
  + $h (c^{\mathcal{A}}) = c^{\mathcal{B}}$ for any individual constant symbol
  $c$.
  
  + For any functional symbol $f$ of arity $n$ and any $a_1, \ldots, a_n \in
  A$
  
  \ \ $\bullet$ $f^{\mathcal{A}} (a_1, \ldots, a_n) \in A \Longleftrightarrow
  f^B (h (a_1), \ldots, h (a_n)) \in B$
  
  \ \ $\bullet$ If $f^{\mathcal{A}} (a_1, \ldots, a_n) \in A$, then $h
  (f^{\mathcal{A}} (a_1, \ldots, a_n)) = f^{\mathcal{B}} (h (a_1), \ldots,
  (a_n))$.
  
  + For any $a_1, \ldots, a_n \in A$, and for any relational symbol $R$ of
  arity $n$,
  
  \ \ $R^{\mathcal{A}} (a_1, \ldots, a_n) \Longleftrightarrow R^{\mathcal{B}}
  (h (a_1), \ldots, h (a_n))$.
\end{definition}

\begin{remark}
  \label{h^-1_is_iso_too}It is easy to see that in case $h : A \longrightarrow
  B$ is an isomorphism between $\mathcal{A}= \langle A, \bar{f}^{\mathcal{A}},
  \bar{R}^{\mathcal{A}}, \bar{c}^{\mathcal{A}} \rangle$ and $\mathcal{B}=
  \langle B, \bar{f}^{\mathcal{B}}, \bar{R}^{\mathcal{B}},
  \bar{c}^{\mathcal{B}} \rangle$, then $h^{- 1} : B \longrightarrow A$ is an
  isomorphism between $\mathcal{B}= \langle B, \bar{f}^{\mathcal{B}},
  \bar{R}^{\mathcal{B}}, \bar{c}^{\mathcal{B}} \rangle$ and $\mathcal{A}=
  \langle A, \bar{f}^{\mathcal{A}}, \bar{R}^{\mathcal{A}},
  \bar{c}^{\mathcal{A}} \rangle$.
\end{remark}

To link assignments of a non-closed structure $\mathcal{A}$ with assignments
of another non-closed structure $\mathcal{B}$ which is isomorphic to the
former, we introduce the following

\begin{definition}
  Let $\mathcal{A}$ be a non-closed structure for a language $\mathcal{L}$.
  Let $t$ be a term of $\mathcal{L}$ whose free variables are $w_1, \ldots,
  w_n$ and let $[ \bar{w} \assign \bar{l}] : \{w_1, \ldots, w_n \} \rightarrow
  A$ be an assignment of the free variables of $t$ in $A$. We say that
  $\tmmathbf{[ \bar{w} \assign \bar{l}]}$ {\tmstrong{evaluates}}
  $\tmmathbf{t}$ {\tmstrong{in}} $\tmmathbf{A}$ whenever $s [ \bar{w} \assign
  \bar{l}] \in A$ for any subterm $s$ of $t$ (observe this means also that $t
  [ \bar{w} \assign \bar{l}] \in A$). When $t [ \bar{w} \assign \bar{l}] \nin
  A$, but $s [ \bar{w} \assign \bar{l}] \in A$ for any other subterm $s$ of
  $t$, we say that $\tmmathbf{[ \bar{w} \assign \bar{l}]}$ {\tmstrong{quasi
  evaluates}} $\tmmathbf{t}$ {\tmstrong{in}} $\tmmathbf{A}$.
\end{definition}

\begin{remark}
  \label{remark_all_quasi_evaluate}Note that for any term $t$ of our language
  $\mathcal{L}$ and any non-closed structure $\mathcal{A}= \langle A,
  \bar{f}^{\mathcal{A}}, \bar{R}^{\mathcal{A}}, \bar{c}^{\mathcal{A}}
  \rangle$, any assignment $[ \bar{w} \assign \bar{l}] : \{w_1, \ldots, w_n \}
  \rightarrow A$ of the free variables of $t$ in $A$ is an assignment that
  quasi evaluates $t$ in $A$.
\end{remark}

Given the previous notion, we can show

\begin{lemma}
  \label{Isom.Terminos}Let $\mathcal{A}= \langle A, \bar{f}^{\mathcal{A}},
  \bar{R}^{\mathcal{A}}, \bar{c}^{\mathcal{A}} \rangle$ and $\mathcal{B}=
  \langle B, \bar{f}^{\mathcal{B}}, \bar{R}^{\mathcal{B}},
  \bar{c}^{\mathcal{B}} \rangle$ be non-closed structures of the language
  $\mathcal{L}$ such that there is an isomorphism $h : A \rightarrow B$. Let
  $t$ be a term of $\mathcal{L}$ whose free variables are $w_1, \ldots, w_n$
  and let $[ \bar{w} \assign \bar{l}] : \{w_1, \ldots, w_n \} \rightarrow A$
  be an assignment that evaluates $t$ in $A$.\\
  Then $[ \bar{w} \assign \overline{h (l)}]$ evaluates $t$ in $\mathcal{B}$
  and $h (t [ \bar{w} \assign \bar{l}]) = t [ \bar{w} \assign \overline{h
  (l)}]$.
\end{lemma}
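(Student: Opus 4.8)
The plan is to proceed by structural induction on the term $t$, following exactly the inductive definition of terms: the base cases are when $t$ is a variable or an individual constant symbol, and the single inductive step is when $t = f(s_1,\ldots,s_n)$ for a functional symbol $f$ of arity $n$ and atomic terms $s_1,\ldots,s_n$. (Note that in this language, by the definition of terms, the arguments $s_1,\ldots,s_n$ inside a function application are themselves \emph{atomic}, so they are variables or constants; this keeps the induction shallow, but I will still phrase it as a general structural induction so the argument is transparent.) Throughout, I fix the isomorphism $h : A \rightarrow B$ and an assignment $[\bar w \assign \bar l]$ that evaluates $t$ in $A$, and I must establish two things simultaneously: that $[\bar w \assign \overline{h(l)}]$ evaluates $t$ in $\mathcal{B}$, and that $h(t[\bar w \assign \bar l]) = t[\bar w \assign \overline{h(l)}]$.

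First I would handle the base cases. If $t = w_i$ is a variable, then $t[\bar w \assign \bar l] = l_i \in A$ and $t[\bar w \assign \overline{h(l)}] = h(l_i) \in B$, so both claims are immediate; the only subterm is $t$ itself, so the evaluation condition is trivially met in both structures. If $t = c$ is a constant symbol, then $t[\bar w \assign \bar l] = c^{\mathcal{A}} \in A$ and $t[\bar w \assign \overline{h(l)}] = c^{\mathcal{B}} \in B$, and $h(c^{\mathcal{A}}) = c^{\mathcal{B}}$ holds precisely because $h$ is an isomorphism; again the evaluation conditions are trivial.

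Next, the inductive step with $t = f(s_1,\ldots,s_n)$. Assuming $[\bar w \assign \bar l]$ evaluates $t$ in $A$, every subterm of each $s_j$ is a subterm of $t$, so $[\bar w \assign \bar l]$ evaluates each $s_j$ in $A$; by the induction hypothesis, $[\bar w \assign \overline{h(l)}]$ evaluates each $s_j$ in $\mathcal{B}$ and $h(s_j[\bar w \assign \bar l]) = s_j[\bar w \assign \overline{h(l)}]$. Write $a_j \assign s_j[\bar w \assign \bar l] \in A$, so that $h(a_j) = s_j[\bar w \assign \overline{h(l)}] \in B$. Since $[\bar w \assign \bar l]$ evaluates $t$ in $A$, in particular $t[\bar w \assign \bar l] = f^{\mathcal{A}}(a_1,\ldots,a_n) \in A$; by the isomorphism clause for functional symbols, $f^{\mathcal{A}}(a_1,\ldots,a_n) \in A$ implies $f^{\mathcal{B}}(h(a_1),\ldots,h(a_n)) \in B$, and moreover $h(f^{\mathcal{A}}(a_1,\ldots,a_n)) = f^{\mathcal{B}}(h(a_1),\ldots,h(a_n))$. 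Unwinding the definitions, $f^{\mathcal{B}}(h(a_1),\ldots,h(a_n)) = f^{\mathcal{B}}(s_1[\bar w \assign \overline{h(l)}],\ldots,s_n[\bar w \assign \overline{h(l)}]) = t[\bar w \assign \overline{h(l)}]$, which gives the value equation; and the fact that this value lies in $B$, together with the induction hypotheses that the $s_j$ are evaluated in $\mathcal{B}$, shows every subterm of $t$ is evaluated in $\mathcal{B}$, i.e. $[\bar w \assign \overline{h(l)}]$ evaluates $t$ in $\mathcal{B}$.

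The main obstacle, such as it is, is purely bookkeeping: one must be careful that the ``evaluates in $A$'' hypothesis is exactly strong enough to feed the induction — it guarantees not only that $t$ itself lands in $A$ but that every proper subterm does too, which is what licenses applying the induction hypothesis to the $s_j$ and, crucially, what lets us invoke the $\Longrightarrow$ direction of the isomorphism's functional clause (the clause is stated as a biconditional on membership, but we only need the forward direction here). No genuine difficulty arises because the language forbids nested non-atomic arguments, so there is no subtlety about deeply nested partial terms; the proof is essentially a verification that the isomorphism axioms were written down correctly. I would close by remarking that the converse direction — pushing an evaluation back from $\mathcal{B}$ to $\mathcal{A}$ along $h^{-1}$ — follows immediately from Remark \ref{h^-1_is_iso_too}.
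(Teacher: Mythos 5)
Your proof is correct and follows essentially the same route as the paper's: structural induction on $t$, with the base cases (variable, constant) immediate and the inductive step for $t = f(s_1,\ldots,s_n)$ resting on the functional clause of the isomorphism definition to transfer both membership in the structure and the value of the term. The only difference is presentational — you spell out the base cases and the bookkeeping about subterms more explicitly than the paper does.
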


\begin{proof}
  By induction over the terms of $\mathcal{L}$.\\
  + For $t$ a variable or a constant it is direct.\\
  + Suppose $t$ has the form $f (t_1, \ldots, t_n)$.
  
  Note $f (t_1, \ldots, t_n) [ \bar{w} \assign \bar{l}] = f^{\mathcal{A}}
  (t_1 [ \bar{w} \assign \bar{l}], \ldots, t_n [ \bar{w} \assign \bar{l}]) \in
  A$ (because $[ \bar{w} \assign \bar{l}]$ evaluates $f (t_1, \ldots, t_n)$)
  and since $h$ is an isomorphism, then \\
  $h (f (t_1, \ldots, t_n) [ \bar{w} \assign \bar{l}]) =$\\
  $h (f^{\mathcal{A}} (t_1 [ \bar{w} \assign \bar{l}], \ldots, t_n [ \bar{w}
  \assign \bar{l}])) =$\\
  $f^{\mathcal{B}} (h (t_1 [ \bar{w} \assign \bar{l}]), \ldots, h (t_n [
  \bar{w} \assign \bar{l}])) \in B$. \ \ \ \ \ \ \ (*1)

  But by induction hypothesis $[ \bar{w} \assign \overline{h (l)}]$ evaluates
  $t_i$ in $\mathcal{B}$ for all $i \in \{1, \ldots, n\}$ and \\
  $h (t_i [ \bar{w} \assign \bar{l}]) = t_i [ \bar{w} \assign \overline{h
  (l)}]$; therefore from (*1) we have that \\
  $f^{\mathcal{B}} (t_1 [ \bar{w} \assign \overline{h (l)}]), \ldots, t_n [
  \bar{w} \assign \overline{h (l)}]) = f^{\mathcal{B}} (h (t_1 [ \bar{w}
  \assign \bar{l}]), \ldots, h (t_n [ \bar{w} \assign \bar{l}])) \in B$. \ \ \
  \ \ \ (*2)

  (*2) shows that $[ \bar{w} \assign \overline{h (l)}]$ evaluates $f (t_1,
  \ldots, t_n)$ in $B$. Moreover, from (*1) and (*2) we get $h (f (t_1,
  \ldots, t_n) [ \bar{w} \assign \bar{l}]) = f^{\mathcal{B}} (t_1 [ \bar{w}
  \assign \overline{h (l)}]), \ldots, t_n [ \bar{w} \assign \overline{h (l)}])
  \underset{\text{\tmop{by} \tmop{definition}}}{=} f (t_1, \ldots, t_n) [
  \bar{w} \assign \overline{h (l)}]$.
\end{proof}

{\newpage}

\subsection{Isomorphisms and satisfiability}

\begin{theorem}
  \label{A|=F[w:=l]_iff_B|=F[w:=hl]}Let $\mathcal{A}= \langle A,
  \bar{f}^{\mathcal{A}}, \bar{R}^{\mathcal{A}}, \bar{c}^{\mathcal{A}} \rangle$
  and $\mathcal{B}= \langle B, \bar{f}^{\mathcal{B}}, \bar{R}^{\mathcal{B}},
  \bar{c}^{\mathcal{B}} \rangle$ be non-closed structures of $\mathcal{L}$
  such that there is an isomorphism $h : A \rightarrow B$. Let $F$ be a
  formula without quantifiers of $\mathcal{L}$ and suppose $w_1, \ldots, w_n$
  are all the free variables in $F$. Let $[ \bar{w} \assign \bar{l}] : \{w_1,
  \ldots, w_n \} \rightarrow A$ be an assignment of the free variables of $F$
  in $A$. Then $\mathcal{A} \vDash F [ \bar{w} \assign \bar{l}]
  \Longleftrightarrow \mathcal{B} \vDash F [ \bar{w} \assign \overline{h
  (l)}]$
\end{theorem}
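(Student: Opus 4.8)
The plan is to prove Theorem \ref{A|=F[w:=l]_iff_B|=F[w:=hl]} by structural induction on the quantifier-free formula $F$, using Lemma \ref{Isom.Terminos} to handle the term-level behaviour of the isomorphism $h$ and Remark \ref{h^-1_is_iso_too} to obtain the reverse direction symmetrically. The base case concerns atomic formulas; the inductive step concerns the connectives $\vee$, $\wedge$, $\rightarrow$, $\neg$, which commute with the satisfaction relation in the usual way once the atomic case is settled. Since $F$ has no quantifiers, no case for $\exists w$ or $\forall w$ is needed, which is exactly why we may get a genuine equivalence here rather than just a one-directional implication (that subtlety is what will matter later when $\Sigma_1$-substructures enter).

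First I would treat the atomic formulas. There are two shapes to consider. For $R(s_1,\ldots,s_n)$ with $R$ a relational symbol of arity $n$: by Remark \ref{remark_all_quasi_evaluate} the assignment $[\bar w\assign\bar l]$ quasi evaluates each $s_i$, but since $R$ is a relational symbol the relevant requirement is only that each $s_i[\bar w\assign\bar l]\in A$ — and here I must be careful, because a term $s_i = f(\ldots)$ could in principle have $s_i[\bar w\assign\bar l]\nin A$. This is the one place where the ``non-closed'' feature of the structures bites. The clean way around it: the definition of atomic formula only allows $R(s_1,\ldots,s_n)$ to be a formula, and satisfaction $\mathcal A\vDash R(s_1,\ldots,s_n)[\bar w\assign\bar l]$ is defined (as with ordinary structures) to require $(s_1[\bar w\assign\bar l],\ldots,s_n[\bar w\assign\bar l])\in A^n$ and then membership of that tuple in $R^{\mathcal A}$; in particular if some $s_i$ is not evaluated in $A$ the formula is simply not satisfied, and by Lemma \ref{Isom.Terminos} (applied contrapositively, noting $h$ and $h^{-1}$ are isomorphisms) the same failure transfers to $\mathcal B$. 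When all $s_i$ are evaluated in $A$, Lemma \ref{Isom.Terminos} gives $h(s_i[\bar w\assign\bar l]) = s_i[\bar w\assign\overline{h(l)}]$, so that $\mathcal B$-side is evaluated too, and then the relational clause of the definition of isomorphism ($R^{\mathcal A}(\bar a)\Leftrightarrow R^{\mathcal B}(h(\bar a))$) closes the case. For the equality atoms $t_1\approx t_2$ with $t_2$ a constant or variable: $t_2[\bar w\assign\bar l]\in A$ automatically, so $\mathcal A\vDash t_1\approx t_2[\bar w\assign\bar l]$ forces $t_1$ to be evaluated in $A$, Lemma \ref{Isom.Terminos} applies to both sides, and the conclusion follows from injectivity of $h$ (for the forward direction) and injectivity of $h^{-1}$ (for the reverse).

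Then the inductive step is routine: for $F = F_1\vee F_2$, $\mathcal A\vDash F[\bar w\assign\bar l]$ iff $\mathcal A\vDash F_1[\bar w\assign\bar l]$ or $\mathcal A\vDash F_2[\bar w\assign\bar l]$, iff (by induction hypothesis, applied to $F_1$ and $F_2$ with the restriction of the assignment to their respective free variables — which are among $w_1,\ldots,w_n$) $\mathcal B\vDash F_1[\bar w\assign\overline{h(l)}]$ or $\mathcal B\vDash F_2[\bar w\assign\overline{h(l)}]$, iff $\mathcal B\vDash F[\bar w\assign\overline{h(l)}]$; and identically for $\wedge$, $\rightarrow$, $\neg$. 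A minor bookkeeping remark: if a free variable of $F$ does not actually occur in a subformula $F_i$, one restricts $[\bar w\assign\bar l]$ accordingly before invoking the induction hypothesis, which is harmless since satisfaction depends only on the assignment to the variables that occur.

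The main obstacle I anticipate is entirely in the atomic case, specifically making precise that the ``non-closed'' structures' satisfaction relation behaves correctly when a term in an atomic formula fails to denote an element of the universe — i.e. pinning down that $\mathcal A\vDash R(s_1,\ldots,s_n)[\bar w\assign\bar l]$ presupposes all $s_i$ are evaluated in $A$, and that this presupposition transfers across $h$. Once that convention is fixed (and it was fixed in the Semantics subsection by declaring satisfaction is treated ``in the same way as with structures''), Lemma \ref{Isom.Terminos} does all the real work and the rest is a standard formula induction.
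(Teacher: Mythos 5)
Your proposal is correct and follows essentially the same route as the paper's own proof: induction on the quantifier-free formula, with Lemma \ref{Isom.Terminos} and Remark \ref{remark_all_quasi_evaluate} carrying the atomic cases (relational and equality atoms, using that $R^{\mathcal{A}}\subset A^n$ forces the terms to be evaluated) and Remark \ref{h^-1_is_iso_too} supplying the reverse direction, the connective cases being immediate from the induction hypothesis. The only cosmetic difference is that the paper splits the equality atoms into subcases according to whether $t_1$ is atomic or of the form $f(s_1,\ldots,s_q)$, which you fold into one case; the content is the same.
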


\begin{proof}
  Let $[ \bar{w} \assign \bar{l}]$ be an assignment satisfying the assumptions
  of the theorem. We proceed by induction on the formula $F$.

  + F is an atomic formula $R (t_1, \ldots, t_n)$.

  To show $\mathcal{A} \vDash F [ \bar{w} \assign \bar{l}] \Longrightarrow
  \mathcal{B} \vDash F [ \bar{w} \assign \overline{h (l)}]$ \ \ \ \ \ \ \ (*0)
  
  $\mathcal{A} \vDash R (t_1, \ldots, t_n) [ \bar{w} \assign \bar{l}]
  \Longleftrightarrow R^{\mathcal{A}} (t_1 [ \bar{w} \assign \bar{l}], \ldots,
  t_n [ \bar{w} \assign \bar{l}])$. \ \ \ \ \ \ \ (*0.1)
  
  But note (*0.1) means, in particular, that $t_1 [ \bar{w} \assign \bar{l}],
  \ldots, t_n [ \bar{w} \assign \bar{l}] \in A$; then, since by remark
  \ref{remark_all_quasi_evaluate} $[ \bar{w} \assign \bar{l}]$ quasi evaluates
  all the terms appearing in $F$, it follows that $[ \bar{w} \assign \bar{l}]$
  evaluates $t_1, \ldots, t_n$ in $\mathcal{A}$. From this, (*0.1) and the
  fact that \ $h$ is an isomorphism we obtain
  
  $R^{\mathcal{B}} (h (t_1 [ \bar{w} \assign \bar{l}]), \ldots, h (t_n [
  \bar{w} \assign \bar{l}])) \Longleftrightarrow$ (by lemma
  \ref{Isom.Terminos})
  
  $R^{\mathcal{B}} (t_1 [ \bar{w} \assign \overline{h (l)}], \ldots, t_n [
  \bar{w} \assign \overline{h (l)}]) \Longleftrightarrow \mathcal{B} \vDash F
  [ \bar{w} \assign \overline{h (l)}]$.
  
  This shows (*0).

  To show $\mathcal{A} \vDash F [ \bar{w} \assign \bar{l}] \Longleftarrow
  \mathcal{B} \vDash F [ \bar{w} \assign \overline{h (l)}]$ \ \ \ \ \ \ \
  (*0.2)
  
  $\mathcal{B} \vDash R (t_1, \ldots, t_n) [ \bar{w} \assign \overline{h
  \left( l \right)}] \Longleftrightarrow R^{\mathcal{B}} (t_1 [ \bar{w}
  \assign \overline{h \left( l \right)}], \ldots, t_n [ \bar{w} \assign
  \overline{h \left( l \right)}])$. \ \ \ \ \ \ \ (*0.3)
  
  Now, since by remark \ref{remark_all_quasi_evaluate} an arbitrary proper
  subterm $s$ of the terms $t_1, \ldots, t_n$ is evaluated by $[ \bar{w}
  \assign \bar{l}]$ in $\mathcal{A}$, then by lemma \ref{Isom.Terminos} we get
  that $s$ is evaluated by $[ \bar{w} \assign \overline{h \left( l \right)}]$
  in $\mathcal{B}$. By this and (*0.3) we conclude, just as in the previous
  case, that $[ \bar{w} \assign \overline{h \left( l \right)}]$ evaluates
  $t_1, \ldots, t_n$ in $\mathcal{B}$. This, (*0.3) and the fact that $h^{-
  1}$ is an isomorphism (by remark \ref{h^-1_is_iso_too}), imply that
  
  $R^{\mathcal{A}} (h^{- 1} (t_1 [ \bar{w} \assign \overline{h \left( l
  \right)}]), \ldots, h^{- 1} (t_n [ \bar{w} \assign \overline{h \left( l
  \right)}])) \Longleftrightarrow$ (by lemma \ref{Isom.Terminos})
  
  $R^{\mathcal{A}} (t_1 [ \bar{w} \assign \overline{h^{- 1} \left( h (l)
  \right)}], \ldots, t_n [ \bar{w} \assign \overline{h^{- 1} \left( h (l)
  \right)}]) \Longleftrightarrow$
  
  $R^{\mathcal{A}} (t_1 [ \bar{w} \assign \bar{l}], \ldots, t_n [ \bar{w}
  \assign \bar{l}]) \Longleftrightarrow \mathcal{A} \vDash F [ \bar{w} \assign
  \bar{l}]$.
  
  This shows (*0.2).

  + $F$ is an atomic formula $t_1 \approx t_2$ with both $t_1$ and $t_2$ being
  either an individual constant or a variable. Then it follows very easily
  that $\mathcal{A} \vDash (t_1 \approx t_2) [ \bar{w} \assign \bar{l}]
  \Longleftrightarrow \mathcal{B} \vDash (t_1 \approx t_2) [ \bar{w} \assign
  \overline{h \left( l \right)}]$.

  + F is an atomic formula $t_1 \approx t_2$ with $t_2$ a constant or a
  variable and $t_1$ of the form $f \left( s_1, \ldots, s_q \right)$. Note
  that by remark \ref{remark_all_quasi_evaluate}, $[ \bar{w} \assign \bar{l}]$
  evaluates $s_1, s_2, \ldots, s_q, t_2$ in $\mathcal{A}$. \ \ \ \ (*1)\\
  Moreover, by (*1) and lemma \ref{Isom.Terminos}, $[ \bar{w} \assign
  \overline{h (l)}]$ evaluates $s_1,, \ldots, s_q, t_2$ in $\mathcal{B}$. \ \
  \ \ \ \ \ (*1.2)

  To show $\mathcal{A} \vDash (t_1 \approx t_2) [ \bar{w} \assign \bar{l}]
  \Longrightarrow \mathcal{B} \vDash \left( t_1 \approx t_2 \right) [ \bar{w}
  \assign \overline{h (l)}]$. \ \ \ \ \ \ \ (*2)
  
  $\mathcal{A} \vDash (t_1 \approx t_2) [ \bar{w} \assign \bar{l}]
  \Longleftrightarrow t_1 [ \bar{w} \assign \bar{l}] = t_2 [ \bar{w} \assign
  \bar{l}] \underset{\text{by (*1)}}{\in} A \Longrightarrow$
  
  $h (t_1 [ \bar{w} \assign \bar{l}]) = h (t_2 [ \bar{w} \assign \bar{l}])
  \in B \Longrightarrow$ (by (*1), previous line and lemma
  \ref{Isom.Terminos})
  
  $t_1 [ \bar{w} \assign \overline{h (l)}] = t_2 [ \bar{w} \assign \overline{h
  (l)}] \Longrightarrow \mathcal{B} \vDash \left( t_1 \approx t_2 \right) [
  \bar{w} \assign \overline{h (l)}]$.
  
  This shows (*2).

  To show $\mathcal{A} \vDash (t_1 \approx t_2) [ \bar{w} \assign \bar{l}]
  \Longleftarrow \mathcal{B} \vDash \left( t_1 \approx t_2 \right) [ \bar{w}
  \assign \overline{h (l)}]$. \ \ \ \ \ \ \ (*3)
  
  $\mathcal{B} \vDash \left( t_1 \approx t_2 \right) [ \bar{w} \assign
  \overline{h (l)}] \Longrightarrow t_1 [ \bar{w} \assign \overline{h (l)}] =
  t_2 [ \bar{w} \assign \overline{h (l)}] \underset{\text{by (*1.2)}}{\in} B
  \Longrightarrow$
  
  $h^{- 1} \left( t_1 [ \bar{w} \assign \overline{h (l)}] \right) = h^{- 1}
  \left( t_2 [ \bar{w} \assign \overline{h (l)}] \right) \in A
  \Longrightarrow$
  
  (by (*1.2), previous line, remark \ref{h^-1_is_iso_too} and lemma
  \ref{Isom.Terminos} used with the isomorphism $h^{- 1}$ and the assignment
  $[ \bar{w} \assign \overline{h (l)}]$)
  
  $t_1 [ \bar{w} \assign \bar{l}] = t_1 [ \bar{w} \assign \overline{h^{- 1}
  \left( h (l) \right)}] = t_2 [ \bar{w} \assign \overline{h^{- 1} \left( h
  (l) \right)}] = t_2 [ \bar{w} \assign \bar{l}] \Longrightarrow$
  
  $\mathcal{A} \vDash (t_1 \approx t_2) [ \bar{w} \assign \bar{l}]$.
  
  This shows (*3).

  + $F$ is an atomic formula $t_2 \approx t_1$ with $t_2$ a constant or a
  variable and $t_1$ of the form $f \left( s_1, \ldots, s_q \right)$. The the
  proof is just as the previous case.

  + The case for the logical connectives follows immediatly by the induction
  hypothesis.
\end{proof}

\section{Substructures and $\Sigma_1$ substructures}

\begin{definition}
  Let $\mathcal{A}= \langle A, \bar{f}^{\mathcal{A}}, \bar{R}^{\mathcal{A}},
  \bar{c}^{\mathcal{A}} \rangle$ and $\mathcal{B}= \langle \mathcal{B},
  \bar{f}^{\mathcal{B}}, \bar{R}^{\mathcal{B}}, \bar{c}^{\mathcal{B}} \rangle$
  be non-closed structures for $\mathcal{L}$. $\mathcal{A}$ is (non-closed)
  substructure of $\mathcal{B}$ iff $A \subset B$, $\bar{f}^{\mathcal{A}} =
  \bar{f}^{\mathcal{B}} |_A$, $\bar{R}^{\mathcal{A}} \subset
  \bar{R}^{\mathcal{B}}$, and $\bar{c}^{\mathcal{A}} = \bar{c}^{\mathcal{B}}$.
\end{definition}

\begin{definition}
  Let $F$ be a formula of $\mathcal{L}$ such that $s_1, \ldots, s_n$ are terms
  appearing in $F$. Let $t_1, \ldots, t_n$ be terms. Then we denote by $F
  \left\langle s_1 \assign t_1, \ldots, s_n \assign t_n \right\rangle$ to the
  formula obtained by the syntactical substitution, for any $i \in \left\{ 1,
  \ldots, n \right\}$, of all the ocurrences of the term $s_i$ by $t_i$ in
  $F$.
\end{definition}

\begin{definition}
  Let \ $\mathcal{A}= \langle A, \bar{f}^{\mathcal{A}}, \bar{R}^{\mathcal{A}},
  \bar{c}^{\mathcal{A}} \rangle$ be a non-closed structure for $\mathcal{L}$
  and $F$ a formula of $\mathcal{L}$. Suppose the terms $t_1, \ldots, t_n$
  appear in $F$. For $l_1, \ldots, l_n \in A$ we define the formula $F
  \left\langle t_1 \assign l_1, \ldots, t_n \assign l_n \right\rangle$ which
  results by the syntactical substitution, for any $i \in \left\{ 1, \ldots, n
  \right\}$, of all the ocurrences of the term $t_i$ in $F$ by the constant
  $l_i$.

  Note that, formally, $F \left\langle t_1 \assign l_1, \ldots, t_n \assign
  l_n \right\rangle$ does not belong to the language $\mathcal{L}$. The idea
  is very simple: {\tmstrong{We}} just {\tmstrong{convey}} that, any time that
  we have a formula like $F$, we consider $F \left\langle t_1 \assign l_1,
  \ldots, t_n \assign l_n \right\rangle$ as a ``formula of $\mathcal{L}$ with
  parameters $l_1, \ldots, l_n$''. Any of the parameters $l_i$ is simply an
  element of $A$ that behaves as a term with a fixed value under any
  assignment $\left[ \bar{w} \assign \bar{e} \right]$, namely, $l_i \left[
  \bar{w} \assign \bar{e} \right] = l_i$.
\end{definition}

\begin{definition}
  Similarly as in the previous definitions, consider the non-closed structures
  $\mathcal{A}= \langle A, \bar{f}^{\mathcal{A}}, \bar{R}^{\mathcal{A}},
  \bar{c}^{\mathcal{A}} \rangle$, $\mathcal{B}= \langle B, \bar{f}^B,
  \bar{R}^{\mathcal{B}}, \bar{c}^{\mathcal{B}} \rangle$ and a formula $F$ of
  $\mathcal{L}$ with parameters $l_1, \ldots, l_n \in A$. Moreover, for any $i
  \in \left\{ 1, \ldots, n \right\}$, let $d_i$ be either a term of
  $\mathcal{L}$ or a parameter $d_i \in B$. Then we denote as $F \left\langle
  l_1 \assign d_1, \ldots, l_n \assign d_n \right\rangle$ to the formula with
  parameters resulting by the syntactical substitution, for any $i \in \left\{
  1, \ldots, n \right\}$, of all the ocurrences of the parameter $l_i$ by
  $d_i$ in $F$.
\end{definition}

The reason of the previous (somewhat annoying) definitions is because we need
them to ennunciate the main notion we want to characterize:

\begin{definition}
  Let \ $\mathcal{A}= \langle A, \bar{f}^{\mathcal{A}}, \bar{R}^{\mathcal{A}},
  \bar{c}^{\mathcal{A}} \rangle$ and $\mathcal{B}= \langle B, \bar{f}^B,
  \bar{R}^{\mathcal{B}}, \bar{c}^{\mathcal{B}} \rangle$ be non-closed
  structures for our language $\mathcal{L}$. $\mathcal{A}$ is sigma one
  non-closed substructure of $\mathcal{B}$, which we abbreviate as usual
  $\mathcal{A} \prec_{\Sigma_1} \mathcal{B}$, if and only if the following two
  statments hold:
  \begin{enumeratenumeric}
    \item $\mathcal{A}$ is (non-closed) substructure of $\mathcal{B}$;
    
    \item For any quantifier free formula $F$ with $n + m$ different free
    variables \\
    $x_1, \ldots, x_n, y_1, \ldots, y_m$ and for any $l_1, \ldots, l_m \in A$,
    \\
    $\mathcal{B} \vDash \exists x_1, \ldots x_n .F \left\langle y_1 \assign
    l_1, \ldots, y_m \assign l_m \right\rangle\Longleftrightarrow $ \\ 
    $\mathcal{A}
    \vDash \exists x_1, \ldots x_n .F \left\langle y_1 \assign l_1, \ldots,
    y_m \assign l_m \right\rangle$.
  \end{enumeratenumeric}
  Note that 2. simply states that $\mathcal{B}$ is model of a $\Sigma_1$
  sentence of $\mathcal{L}$ with parameters in $\mathcal{A}$ if and only if
  $\mathcal{A}$ is model of the same sentence.
\end{definition}

We can finally present the theorem that is our main interest:

\begin{theorem}
  \label{A<less>_s_1B_equivalence}Let \ $\mathcal{A}= \langle A,
  \bar{f}^{\mathcal{A}}, \bar{R}^{\mathcal{A}}, \bar{c}^{\mathcal{A}} \rangle$
  and $\mathcal{B}= \langle B, \bar{f}^B, \bar{R}^{\mathcal{B}},
  \bar{c}^{\mathcal{B}} \rangle$ be non-closed structures for our language
  $\mathcal{L}$. Then:
  
  $\mathcal{A}$ is a $\Sigma_1$ (non-closed) substructure of $\mathcal{B}$
  (that is, $\mathcal{A} \prec_{\Sigma_1} \mathcal{B}$)
  
  $\Longleftrightarrow$
  
  $\mathcal{A}$ is (non-closed) substructure of $\mathcal{B}$ and whenever $X$
  is a finite subset of $A$ and $Y$ is a finite subset of $B \backslash A$,
  there exists a subset $\hat{Y} $ of $A$ and an isomorphism\\
  $h : X \cup Y \rightarrow X \cup \hat{Y}$ from the non-closed structure\\
  $\langle X \cup Y, \bar{f}^{\mathcal{B}} |_{X \cup Y}, \bar{R}^{\mathcal{B}}
  |_{X \cup Y}, \bar{c}^B |_{X \cup Y} \rangle$ to the non-closed structure\\
  $\langle X \cup \hat{Y}, \bar{f}^{\mathcal{B}} |_{X \cup \hat{Y}},
  \bar{R}^{\mathcal{B}} |_{X \cup \hat{Y}}, \bar{c}^B |_{X \cup \hat{Y}}
  \rangle$ such that $h (x) = x$ for any $x \in X$.
\end{theorem}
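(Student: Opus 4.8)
The plan is to prove both directions of the equivalence by reducing satisfiability of $\Sigma_1$ sentences with parameters to the combinatorial statement about isomorphisms of finite substructures, using Theorem \ref{A|=F[w:=l]_iff_B|=F[w:=hl]} as the main bridge. Since the paper's language $\mathcal{L}$ has only atomic terms as arguments of function symbols, every term has ``depth $\leqslant 1$'', so a quantifier-free formula $F$ involves only finitely many terms built from the variables and constants; this finiteness is what lets us pass back and forth between finite substructures and satisfiability.

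First I would prove the direction ($\Leftarrow$). Assume $\mathcal{A}$ is a (non-closed) substructure of $\mathcal{B}$ and the isomorphism-extension property holds. Fix a quantifier-free $F$ with free variables $x_1,\dots,x_n,y_1,\dots,y_m$ and parameters $l_1,\dots,l_m\in A$. The implication $\mathcal{A}\vDash\exists\bar x.F\langle\bar y\assign\bar l\rangle \Rightarrow \mathcal{B}\vDash\exists\bar x.F\langle\bar y\assign\bar l\rangle$ is the easy half and follows directly from $\mathcal{A}$ being a substructure of $\mathcal{B}$ (a witness in $A$ is a witness in $B$, using that $\bar f^{\mathcal A}=\bar f^{\mathcal B}|_A$ and $\bar R^{\mathcal A}\subset\bar R^{\mathcal B}$; one has to be a little careful because the structures are non-closed, but Theorem \ref{A|=F[w:=l]_iff_B|=F[w:=hl]} applied to the identity-type embedding handles it, or one argues directly). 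For the nontrivial implication, suppose $\mathcal{B}\vDash F\langle\bar y\assign\bar l\rangle[\bar x\assign\bar b]$ for some $\bar b=(b_1,\dots,b_n)$ in $B$. Let $Y\assign\{b_1,\dots,b_n\}\setminus A$ and $X\assign\{l_1,\dots,l_m\}\cup(\{b_1,\dots,b_n\}\cap A)$, both finite, $X\subset A$, $Y\subset B\setminus A$. Apply the hypothesis to get $\hat Y\subset A$ and an isomorphism $h:X\cup Y\to X\cup\hat Y$ (of the induced non-closed $\mathcal{B}$-substructures) fixing $X$ pointwise. Set $a_i\assign h(b_i)$ for each $i$ (with $a_i=b_i$ when $b_i\in X$). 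Then $h$ restricted to the finite substructure whose universe contains all the finitely many term-values occurring in $F\langle\bar y\assign\bar l\rangle[\bar x\assign\bar b]$ is an isomorphism fixing the parameters $l_j$, so by Theorem \ref{A|=F[w:=l]_iff_B|=F[w:=hl]} (applied with the substructures $\langle X\cup Y,\dots\rangle$ and $\langle X\cup\hat Y,\dots\rangle$ and the assignment sending $x_i\mapsto b_i$) we get that the substructure with universe $X\cup\hat Y$ satisfies $F\langle\bar y\assign\bar l\rangle[\bar x\assign\bar a]$. Finally, because $\mathcal{A}$ is a substructure of $\mathcal{B}$ and $X\cup\hat Y\subset A$, and because satisfaction of a quantifier-free formula depends only on the finitely many term-values involved (all of which lie in $X\cup\hat Y$ by construction of $\hat Y$ via the isomorphism), this lifts to $\mathcal{A}\vDash F\langle\bar y\assign\bar l\rangle[\bar x\assign\bar a]$, hence $\mathcal{A}\vDash\exists\bar x.F\langle\bar y\assign\bar l\rangle$.

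Next I would prove ($\Rightarrow$). Assume $\mathcal{A}\prec_{\Sigma_1}\mathcal{B}$. The substructure clause is immediate. Given finite $X\subset A$ and finite $Y=\{b_1,\dots,b_n\}\subset B\setminus A$, I want to build $\hat Y\subset A$ and the required identity-on-$X$ isomorphism. The idea is to write down, as a single quantifier-free formula $F$ with free variables $x_1,\dots,x_n$ (for the elements of $Y$) and $y_1,\dots,y_m$ (for the elements of $X$, which become parameters $l_1,\dots,l_m$), the complete ``diagram'' of the finite non-closed structure $\langle X\cup Y,\bar f^{\mathcal B}|_{X\cup Y},\bar R^{\mathcal B}|_{X\cup Y},\bar c^{\mathcal B}|_{X\cup Y}\rangle$: namely the conjunction of all true and all negated atomic statements about which finite sums/relations among the finitely many relevant terms hold, which $x_i$ equal which, which are distinct, and crucially which values $f^{\mathcal B}(x_i,x_j)$ etc.\ land inside $X\cup Y$ versus outside (encoded by disjunctions $\bigvee_k (\text{term}\approx x_k)\vee\bigvee_j(\text{term}\approx y_j)$ for ``inside'' and the negation of all such for ``outside''). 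Then $\mathcal{B}\vDash F\langle\bar y\assign\bar l\rangle[\bar x\assign\bar b]$, so $\mathcal{B}\vDash\exists\bar x.F\langle\bar y\assign\bar l\rangle$, so by the $\Sigma_1$-substructure hypothesis $\mathcal{A}\vDash\exists\bar x.F\langle\bar y\assign\bar l\rangle$; pick witnesses $a_1,\dots,a_n\in A$. Define $\hat Y\assign\{a_1,\dots,a_n\}\setminus X$ and $h:X\cup Y\to X\cup\hat Y$ by $h(b_i)=a_i$ and $h|_X=\mathrm{Id}$. That $F$ was chosen to record exactly the isomorphism-type data guarantees $h$ is well-defined, bijective, and an isomorphism of the non-closed substructures.

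The main obstacle is the bookkeeping in the ($\Rightarrow$) direction: correctly choosing the finite set of terms to describe and writing $F$ so that it pins down the non-closed isomorphism type --- in particular faithfully encoding the clauses ``$f^{\mathcal B}(a_1,\dots,a_k)\in X\cup Y$'' versus ``$\nin X\cup Y$'' (the crucial feature of non-closed structures, since $f^{\mathcal B}$ on $Y$ may escape $X\cup Y$) and ensuring injectivity of $h$ is forced by including all inequalities $\neg(x_i\approx x_j)$ and $\neg(x_i\approx y_j)$ as appropriate. One must also check that $F$ genuinely is a quantifier-free $\mathcal{L}$-formula given the restricted syntax (function symbols only take atomic-term arguments), which constrains how deep the relevant terms can be and hence keeps the diagram finite; this is where the peculiar definition of $\mathcal{L}$'s terms is used. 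Once $F$ is set up correctly, both directions are routine applications of Theorem \ref{A|=F[w:=l]_iff_B|=F[w:=hl]} and the definitions.
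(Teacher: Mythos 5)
Your overall architecture is the same as the paper's: for $(\Rightarrow)$ you write down the quantifier-free diagram of the finite non-closed structure on $X\cup Y$ (including the negative function-value clauses that record escape from $X\cup Y$, and the inequalities forcing injectivity of $h$), existentially quantify the $Y$-positions, and pull witnesses into $A$ by $\Sigma_1$-elementarity; for $(\Leftarrow)$ you transfer a witnessed quantifier-free formula through the finite isomorphism via Theorem \ref{A|=F[w:=l]_iff_B|=F[w:=hl]}. The $(\Rightarrow)$ direction as you describe it is essentially the paper's proof and is fine.

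There is, however, a concrete gap in your $(\Leftarrow)$ direction: you set $Y:=\{b_1,\dots,b_n\}\setminus A$ and $X:=\{l_1,\dots,l_m\}\cup(\{b_1,\dots,b_n\}\cap A)$, i.e.\ only the witnesses and the parameters. But $F$ may contain a compound term such as $f(x_1,x_2)$ whose value $d:=f^{\mathcal{B}}(b_1,b_2)$ lies in $B$ but outside $\{b_1,\dots,b_n\}\cup\{l_1,\dots,l_m\}$. Then $d\nin X\cup Y$, and the restricted structure $\langle X\cup Y,\bar{f}^{\mathcal{B}}|_{X\cup Y},\bar{R}^{\mathcal{B}}|_{X\cup Y},\dots\rangle$ need not satisfy $F\langle\bar{y}\assign\bar{l}\rangle[\bar{x}\assign\bar{b}]$ even though $\mathcal{B}$ does: for instance an atomic subformula $R(f(x_1,x_2))$ true in $\mathcal{B}$ becomes false in the restriction because $R^{\mathcal{B}}|_{X\cup Y}\subset(X\cup Y)^1$ excludes $d$. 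So the hypothesis of Theorem \ref{A|=F[w:=l]_iff_B|=F[w:=hl]} (satisfaction in the source finite structure) is not available, and your phrase ``$h$ restricted to the finite substructure whose universe contains all the finitely many term-values'' does not make sense, since $h$ is only defined on $X\cup Y$, which may omit those values. The fix is exactly what the paper does: before invoking the isomorphism-extension hypothesis, enlarge $X$ and $Y$ to contain the values $t_i[\bar{u}\assign\bar{e}]$ of \emph{all} terms and subterms of $F\langle\bar{w}\assign\bar{l}\rangle$ that are evaluated in $B$ by the witnessing assignment (split according to membership in $A$); since every term of $\mathcal{L}$ has depth at most one, this set is finite, satisfaction of $F$ then genuinely localizes to $X\cup Y$, and the rest of your argument goes through.
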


\begin{proof}
  Let $\mathcal{L}$, $\mathcal{A}$ and $\mathcal{B}$ be as stated. Since
  $\mathcal{L}$ is of finite signature, then there exists a natural number $M
  \in \mathbbm{N}$ such that the arities of all the relational and functional
  symbols in $\mathcal{L}$ is less or equal to $M$. This way, for any natural
  numbers $n, m \in \left[ 1, M \right]$, let Rel$_n$ be the set of relational
  symbols of $\mathcal{L}$ of arity $n$ and let Func$_m$ be the set of
  functional symbols of $\mathcal{L}$ of arity $n$.

  Now we show the direction $\Longrightarrow$) of the theorem. \ \ \ \
  {\tmstrong{(*)}}

  Suppose $\mathcal{A} \prec_{\Sigma_1} \mathcal{B}$. \ \ \ \ \ \ \
  {\tmstrong{(*0)}}

  So $\mathcal{A}$ is a substructure of $\mathcal{B}$ and we only have to
  prove the isomorphisms-related issue. Let $X \subset_{\tmop{fin}} A$ and \
  $Y \subset_{\tmop{fin}} B \backslash A$ be arbitrary. Moreover, suppose $X =
  \left\{ b_1, \ldots, b_l \right\}$ and $Y = \left\{ c_1, \ldots, c_k
  \right\}$ for some $l, k \in \mathbbm{N}$.

  Consider the formula $\Gamma$ with parameters in $\mathcal{B}$ defined as
  
  $\underset{n \in \left[ 1, M \right] \wedge R \in \tmop{Rel}_n \wedge
  R^{\beta} \left( q \right) \tmop{for} q \in \left( X \cup Y
  \right)^n}{\bigwedge} R \left( q \right) \wedge$
  
  $\underset{n \in \left[ 1, M \right] \wedge R \in \tmop{Rel}_n \wedge \neg
  R^{\beta} \left( q \right) \tmop{for} q \in \left( X \cup Y
  \right)^n}{\bigwedge} \neg R \left( q \right) \wedge$

  $\underset{n \in \left[ 1, M \right] \wedge f \in \tmop{Func}_n \wedge
  f^{\beta} \left( q \right) = t \tmop{for} q \in \left( X \cup Y \right)^n, t
  \in X \cup Y}{\bigwedge} f (q) \approx t \wedge$
  
  $\underset{n \in \left[ 1, M \right] \wedge f \in \tmop{Func}_n \wedge
  f^{\beta} \left( q \right) \neq t \tmop{for} q \in \left( X \cup Y
  \right)^n, t \in X \cup Y}{\bigwedge} \neg \left( f (q) \approx t \right)$.

  Let $z_1, \ldots, z_k$ be $k$ different variables of $\mathcal{L}$. Note
  $\mathcal{B} \vDash \Gamma$ and therefore $\mathcal{B} \vDash \exists z_1,
  \ldots z_k . \Gamma \left\langle c_1 \assign z_1, \ldots, c_k \assign z_k
  \right\rangle$. But $\Gamma \left\langle c_1 \assign z_1, \ldots, c_k
  \assign z_k \right\rangle$ is a $\Sigma_1$ sentence of $\mathcal{L}$ with
  parameters $b_1, \ldots, b_l$ in $\mathcal{A}$, and therefore, by our
  hypotheis (*0), $\mathcal{A} \vDash \exists z_1, \ldots z_k . \Gamma
  \left\langle c_1 \assign z_1, \ldots, c_k \assign z_k \right\rangle$, which
  means there is $\left( a_1, \ldots, a_k \right) \in A^k$ such that
  $\mathcal{A} \vDash \Gamma \left\langle c_1 \assign z_1, \ldots, c_k \assign
  z_k \right\rangle \left[ z_1 \assign a_1, \ldots, z_k \assign a_k \right]$.
  \ \ \ \ \ \ \ {\tmstrong{(*1)}}

  Let it be $\hat{Y} \assign \left\{ a_1, \ldots, a_k \right\}$.
  
  We now show that the function $h : X \cup Y \xrightarrow[\begin{array}{l}
    X \ni x \longmapsto x\\
    Y \ni c_i \longmapsto a_i
  \end{array}]{} X \cup \hat{Y}$ is the function we are looking for.
  
  $h$ is bijective. $h$ is injective because for $i, j \in \left[ 1, k
  \right]$, $i \neq j$, the formula $\neg \left( z_i \approx z_j \right)$ is a
  subformula of $\Gamma$. Moreover, from the definition of $h$ it is clearly
  surjective.\\

  Clearly $h \left( x \right) = x$ for any $x \in X$.

  Now, let $R \in \tmop{Rel}_n$, $e_1, \ldots, e_n \in X \cup Y$ and $n \in
  \left[ 1, M \right]$.

  To show $R^{\mathcal{B}} |_{X \cup Y} \left( e_1, \ldots, e_n \right)
  \Longleftrightarrow R^{\mathcal{B}} |_{X \cup \hat{Y}} \left( h \left( e_1
  \right), \ldots, h \left( e_n \right) \right)$. \ \ \ \ \ \ \
  {\tmstrong{(*2)}}

  $\left. \Longrightarrow \right)$
  
  $R^{\mathcal{B}} |_{X \cup Y} \left( e_1, \ldots, e_n \right)
  \Longrightarrow R^{\mathcal{B}} \left( e_1, \ldots, e_n \right)
  \Longrightarrow$ $R \left( e_1, \ldots, e_n \right)$ is \\ 
  subformula of $\Gamma$ $\Longrightarrow$  $R \left( e_1, \ldots, e_n \right) \left\langle c_1 \assign z_1, \ldots, c_k
  \assign z_k \right\rangle$ is subformula \\ of $\Gamma \left\langle c_1 \assign
  z_1, \ldots, c_k \assign z_k \right\rangle$ $\xRightarrow[\text{by (*1)}]{}$ \\
  $\mathcal{A} \vDash R \left( e_1, \ldots, e_n \right) \left\langle c_1
  \assign z_1, \ldots, c_k \assign z_k \right\rangle \left[ z_1 \assign a_1,
  \ldots, z_k \assign a_k \right]$, but note that the latter is exactly the
  same as $\mathcal{A} \vDash R \left( h \left( e_1 \right), \ldots, h \left(
  e_n \right) \right)$ and so\\
  $R^{\mathcal{B}} |_{X \cup \hat{Y}} \left( h
  \left( e_1 \right), \ldots, h \left( e_n \right) \right)$.

  $\left. \Longleftarrow \right)$
  
  We show $R^{\mathcal{B}} |_{X \cup \hat{Y}} \left( h \left( e_1 \right),
  \ldots, h \left( e_n \right) \right) \Longrightarrow R^{\mathcal{B}} |_{X
  \cup Y} \left( e_1, \ldots, e_n \right)$ by contrapositive. Suppose
  $R^{\mathcal{B}} |_{X \cup Y} \left( e_1, \ldots, e_n \right)$ doesn't hold.
  Then $\neg R \left( e_1, \ldots, e_n \right)$ is subformula of $\Gamma$ and
  then $\neg R \left( e_1, \ldots, e_n \right) \left\langle c_1 \assign z_1,
  \ldots c_k \assign z_k \right\rangle$ is subformula of $\Gamma \left\langle
  c_1 \assign z_1, \ldots, c_k \assign z_k \right\rangle$. Thus, by (*1),\\
  $\mathcal{A} \vDash \neg R \left( e_1, \ldots, e_n \right) \left\langle c_1
  \assign z_1, \ldots, c_k \assign z_k \right\rangle \left[ z_1 \assign a_1,
  \ldots, z_k \assign a_k \right]$ which is exactly the same as $\mathcal{A}
  \vDash \neg R \left( h \left( e_1 \right), \ldots, h \left( e_n \right)
  \right)$; this way, \\
  $R^{\mathcal{B}} |_{X \cup \hat{Y}} \left( h \left( e_1
  \right), \ldots, h \left( e_n \right) \right)$ doesn't hold.

  All the previous shows that (*2) holds.

  Now, let $f \in \tmop{Func}_n$, $e_1, \ldots, e_n \in X \cup Y$ and $n \in
  \left[ 1, M \right]$.

  Let's suppose $f^{\mathcal{B}} |_{X \cup Y} \left( e_1, \ldots, e_n \right)
  \in X \cup Y.$ \ \ \ \ \ \ \ {\tmstrong{(*3)}}
  
  To show $f^{\mathcal{B}} |_{X \cup \hat{Y}} \left( h \left( e_1 \right),
  \ldots, h \left( e_n \right) \right) \in X \cup \hat{Y}$ and
  
  $f^{\mathcal{B}} |_{X \cup \hat{Y}} \left( h \left( e_1 \right), \ldots, h
  \left( e_n \right) \right) = h \left( f^{\mathcal{B}} |_{X \cup Y} \left(
  e_1, \ldots, e_n \right) \right)$. \ \ \ \ \ \ \ {\tmstrong{(*3.1)}}

  By (*3), $f^{\mathcal{B}} |_{X \cup Y} \left( e_1, \ldots, e_n \right) =
  f^{\mathcal{B}} \left( e_1, \ldots, e_n \right) = d$ for some $d \in X \cup
  Y$. Then the formula $f \left( e_1, \ldots, e_n \right) \approx d$ is a
  subformula of $\Gamma$ and therefore $\left( f \left( e_1, \ldots, e_n
  \right) \approx d \right) \left\langle c_1 \assign z_1, \ldots c_k \assign
  z_k \right\rangle$ is subformula of\\
  $\Gamma \left\langle c_1 \assign z_1,
  \ldots, c_k \assign z_k \right\rangle$. Then, by (*1),\\
  $\mathcal{A} \vDash
  \left( f \left( e_1, \ldots, e_n \right) \approx d \right) \left\langle c_1
  \assign z_1, \ldots c_k \assign z_k \right\rangle \left[ z_1 \assign a_1,
  \ldots, z_k \assign a_k \right]$, which \\
  means $f^{\mathcal{A}} \left( h
  \left( e_1 \right), \ldots, h \left( e_n \right) \right) = h \left( d
  \right) \in X \cup \hat{Y}$. Note the latter equality is exactly as the one
  in (*3.1), since $\mathcal{A}$ is substructure of $\mathcal{B}$.
  
  This shows the two assertions in (*3.1).

  Let's suppose $f^{\mathcal{B}} |_{X \cup Y} \left( e_1, \ldots, e_n \right)
  \nin X \cup Y$. \ \ \ \ \ \ \ {\tmstrong{(*3.2)}}
  
  To show $f^{\mathcal{B}} |_{X \cup \hat{Y}} \left( h \left( e_1 \right),
  \ldots, h \left( e_n \right) \right) \nin X \cup \hat{Y}$. \ \ \ \ \ \
  {\tmstrong{(*3.3)}}
  
  Let $d \in X \cup \hat{Y}$ be arbitrary. Similarly as before, (*3.2) implies
  that the formula $\neg \left( f \left( e_1, \ldots, e_n \right) \approx h^{-
  1} \left( d \right) \right)$ is a subformula of $\Gamma$. So\\
  $\neg \left( f
  \left( e_1, \ldots, e_n \right) \approx h^{- 1} \left( d \right) \right)
  \left\langle c_1 \assign z_1, \ldots c_k \assign z_k \right\rangle$ is
  subformula of \\
  $\Gamma \left\langle c_1 \assign z_1, \ldots, c_k \assign z_k
  \right\rangle$. Then, by (*1), \\
  $\mathcal{A} \vDash \neg \left( f \left( e_1,
  \ldots, e_n \right) \approx h^{- 1} \left( d \right) \right) \left\langle
  c_1 \assign z_1, \ldots c_k \assign z_k \right\rangle \left[ z_1 \assign
  a_1, \ldots, z_k \assign a_k \right]$, \\
  which means $f^{\mathcal{B}} |_{X
  \cup \hat{Y}} \left( h \left( e_1 \right), \ldots, h \left( e_n \right)
  \right) \neq h \left( h^{- 1} \left( d \right) \right) = d$. Since we have
  done this for arbitrary $d \in X \cup \hat{Y}$, we have shown (*3.3).

  All of the previous shows that $h$ is indeed an isomorphism with $h \left( x
  \right) = x$ for any $x \in X$ and therefore, we have shown (*).

  Now we show the direction $\Longleftarrow$) of the theorem. \ \ \ \ \ \ \
  {\tmstrong{(**)}}
  
  So assume the right hand side of the double implication \\
  asserting the theorem. \ \ \ \ \ \ \ {\tmstrong{(*4)}}

  We want to show that $\mathcal{A} \prec_{\Sigma_1} \mathcal{B}$. By
  hypothesis, $\mathcal{A}$ is a non-closed substructure of $\mathcal{B}$, so
  it is only left to show that:
  
  For any quantifier free formula $F$ with $n + m$ different free variables\\
  $u_1, \ldots, u_n, w_1, \ldots, w_m$ and for any $l_1, \ldots, l_m \in A$,\\
  $\mathcal{B} \vDash \exists u_1 \ldots u_n .F \left\langle w_1 \assign l_1,
  \ldots, w_m \assign l_m \right\rangle \Longleftrightarrow$\\ $ \mathcal{A} \vDash
  \exists u_1 \ldots u_n .F \left\langle w_1 \assign l_1, \ldots, w_m \assign
  l_m \right\rangle$ \ \ \ \ \ \ \ {\tmstrong{(*5)}}

  So let's show (*5).
  
  Let $F$ be an arbitrary quantifier free formula whose free (different to
  each other) variables are $u_1, \ldots, u_n, w_1, \ldots, w_m$ and let it be
  $l_1, \ldots, l_m \in A$. Clearly $\mathcal{A} \vDash \exists u_1 \ldots u_n
  .F \left\langle w_1 \assign l_1, \ldots, w_m \assign l_m \right\rangle$
  implies \\
  $\mathcal{B} \vDash \exists u_1 \ldots u_n .F \left\langle w_1
  \assign l_1, \ldots, w_m \assign l_m \right\rangle$ because $\mathcal{A}$ is
  a substructure of $\mathcal{B}$. So we actually only have to show that\\
  $\mathcal{B} \vDash \exists u_1 \ldots u_n .F \left\langle w_1 \assign l_1,
  \ldots, w_m \assign l_m \right\rangle \Longrightarrow$\\
  $\mathcal{A} \vDash
  \exists u_1 \ldots u_n .F \left\langle w_1 \assign l_1, \ldots, w_m \assign
  l_m \right\rangle$ \ \ \ \ \ \ {\tmstrong{ (*6)}}

  To show (*6).
  
  Suppose $\mathcal{B} \vDash \exists u_1 \ldots u_n .F \left\langle w_1
  \assign l_1, \ldots, w_m \assign l_m \right\rangle$. \ \ \ \ \ \ \
  {\tmstrong{(*7)}}
  
  Then there exist $e_1, \ldots, e_n \in B$ such that\\ 
  $\mathcal{B} \vDash F
  \left\langle w_1 \assign l_1, \ldots, w_m \assign l_m \right\rangle \left[
  u_1 \assign e_1, \ldots, u_n \assign e_n \right]$. \ \ \ \ \ \ \
  {\tmstrong{(*8)}}

  On the other hand, let $t_1, \ldots, t_q$ be all the terms and subterms
  appearing in $F \left\langle w_1 \assign l_1, \ldots, w_m \assign l_m
  \right\rangle$ such that $t_1, \ldots, t_q$ are evaluated in $B$ by $\left[
  u_1 \assign e_1, \ldots, u_n \assign e_n \right]$. Moreover, for any $i \in
  \left\{ 1, \ldots, q \right\}$, let $b_i \in B$ be such that $t_i \left[ u_1
  \assign e_1, \ldots, u_n \assign e_n \right] = b_i$.

  To make more manageable our notation, let's abbreviate\\
  $\left\langle w_1 \assign l_1, \ldots, w_m \assign l_m \right\rangle$ and $\left[ u_1 \assign
  e_1, \ldots, u_n \assign e_n \right]$ as $\left\langle \bar{w} \assign
  \bar{l} \right\rangle$ and $\left[ \bar{u} \assign \bar{e} \right]$,
  respectively.

  Let $Y \assign \left\{ b_i |i \in \left\{ 1, \ldots, q \right\} \wedge b_i
  \in B \backslash A \right\} = \left\{ y_1, \ldots, y_j \right\}$ and \\
  $X \assign \left\{ b_i |i \in \left\{ 1, \ldots, q \right\} \wedge b_i \in A
  \right\} = \left\{ x_1, \ldots, x_k \right\}$. By (*4), there exists a set
  $\hat{Y} \assign \left\{ \hat{y}_1, \ldots, \hat{y}_j \right\} \subset A$
  such that the function $\underset{\begin{array}{l}
    x_i \longmapsto x_i\\
    y_i \longmapsto \hat{y}_i
  \end{array}}{h : X \cup Y \longrightarrow X \cup \hat{Y}}$ is an
  isomorphism. \ \ \ \ \ \ \ {\tmstrong{(*9)}}

  Now observe that (*8) and our definition of $Y$ and $X$ imply that\\
  $X \cup Y \vDash F \left\langle \bar{w} \assign \bar{l} \right\rangle \left[
  \bar{u} \assign \bar{e} \right]$, i.e., $X \cup Y \vDash F \left[ \bar{w}
  \assign \bar{l}, \bar{u} \assign \bar{e} \right]$ and therefore, by (*9) and
  theorem \ref{A|=F[w:=l]_iff_B|=F[w:=hl]}, $X \cup \hat{Y} \vDash F \left[
  \bar{w} \assign \overline{h \left( l \right)}, \bar{u} \assign \overline{h
  \left( e \right)} \right]$; so, using that $\forall a \in Y \cap A.h \left(
  a \right) = a$, we get $X \cup \hat{Y} \vDash F \left\langle \bar{w} \assign
  \bar{l} \right\rangle \left[ \bar{u} \assign \overline{h \left( e \right)}
  \right]$. But then $X \cup \hat{Y} \vDash \exists u_1 \ldots u_n .F
  \left\langle \bar{w} \assign \bar{l} \right\rangle$ and since $X \cup
  \hat{Y} \subset A$, we conclude $\mathcal{A} \vDash \exists u_1 \ldots u_n
  .F \left\langle \bar{w} \assign \bar{l} \right\rangle$.

  All of the previous shows (*6). Therefore (*5) is also proven and
  subsequently (**) has been proven too.

  This concludes the proof of the whole theorem.
\end{proof}

\begin{remark}
  In previous theorem \ref{A<less>_s_1B_equivalence}, it is necessary that
  $\mathcal{L}$ contains an equality symbol to show that $h : X \cup Y
  \rightarrow X \cup \hat{Y}$ is injective. To see this, consider a language
  $L_0$ with has only one binary relation $\sim$. Let \ $\mathcal{A}= \langle
  A, \sim^{\mathcal{A}} \rangle$ and $\mathcal{B}= \langle B,
  \sim^{\mathcal{B}} \rangle$ be given as $A \assign \left\{ 0 \right\},
  \sim^{\mathcal{A}} \assign \left\{ \left( 0, 0 \right) \right\}$, $B \assign
  \left\{ 0, 1 \right\}$ and $\sim^{\mathcal{B}} \assign \left\{ \left( 0, 0
  \right), \left( 1, 1 \right), \left( 0, 1 \right), \left( 1, 0 \right)
  \right\}$. Then one can prove:
  \begin{itemizedot}
    \item $\mathcal{A} \prec_{\Sigma_1} \mathcal{B}$
  \end{itemizedot}
  Moreover, for $X \assign \left\{ 0 \right\} \subset_{\tmop{fin}} A$ and $Y
  \assign \left\{ 1 \right\} \subset_{\tmop{fin}} B \backslash A$ the
  following holds:
  \begin{itemizedot}
    \item There exist no $\hat{Y} \subset_{\tmop{fin}} A$ and an isomorfism $h
    : X \cup Y \rightarrow X \cup \hat{Y}$ with $h \left( x \right) = x$ for
    any $x \in X$.
    
    \item For $\hat{Y} \assign \emptyset \subset_{\tmop{fin}} A$, the function
    $l : X \cup Y \xrightarrow[x \longmapsto 0]{} X \cup \hat{Y}$ is an
    homomorphism with $l \left( x \right) = x$ for any $x \in X$.
  \end{itemizedot}
\end{remark}

\begin{remark}
  A more classical version of theorem \ref{A<less>_s_1B_equivalence}, where
  one does not have to deal with the hassles of considering non-closed
  structures, can be stated as follows:
\end{remark}

\begin{theorem}
  Let \ $\mathcal{A}= \langle A, \bar{R}^{\mathcal{A}}, =^{\mathcal{A}}
  \rangle$ and $\mathcal{B}= \langle B, \bar{R}^{\mathcal{B}}, =^{\mathcal{B}}
  \rangle$ be structures for a language $\mathcal{L}$ with equality symbol and
  with a finite number of relational symbols. Then:
  
  $\mathcal{A}$ is $\Sigma_1$ (non-closed) substructure of $\mathcal{B}$ (that
  is, $\mathcal{A} \prec_{\Sigma_1} \mathcal{B}$)
  
  $\Longleftrightarrow$
  
  $\mathcal{A}$ is substructure of $\mathcal{B}$ and whenever $X$ is a finite
  subset of $A$ and $Y$ is a finite subset of $B \backslash A$, there exists a
  subset $\hat{Y} $ of $A$ and an isomorphism\\
  $h : X \cup Y \rightarrow X \cup \hat{Y}$ from $\left\langle X \cup Y,
  \bar{R}^{\mathcal{B}} |_{X \cup Y} \right\rangle$ to $\left\langle X \cup
  \hat{Y}, \bar{R}^{\mathcal{B}} |_{X \cup \hat{Y}} \right\rangle$ such that
  $h (x) = x$ for any $x \in X$.
\end{theorem}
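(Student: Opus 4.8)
The plan is to deduce this statement as a direct specialization of Theorem \ref{A<less>_s_1B_equivalence}. Observe that a language $\mathcal{L}$ whose signature $\langle \bar{R}, \bar{f}, \bar{c}\rangle$ has $\bar{f} = \emptyset$ and $\bar{c} = \emptyset$, but which does contain the equality symbol (as assumed here), is a particular instance of the languages considered in the appendix: since there are no functional symbols, the clause in the definition of a non-closed structure asking that each $f^{\mathcal{A}}$ map some universe $U \supset A$ into $U$ is vacuous, so every ordinary $\mathcal{L}$-structure $\langle A, \bar{R}^{\mathcal{A}}, =^{\mathcal{A}}\rangle$ is automatically a non-closed structure in that sense, and conversely. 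Likewise the notion of (non-closed) substructure collapses to $A \subset B$ together with $\bar{R}^{\mathcal{A}} \subset \bar{R}^{\mathcal{B}}$, which is the usual notion of substructure for purely relational languages, and the notion of $\Sigma_1$ (non-closed) substructure becomes the usual notion of $\Sigma_1$-elementary substructure for $\exists$-formulas with parameters.

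With this identification the first step is to invoke Theorem \ref{A<less>_s_1B_equivalence} for $\mathcal{A}$ and $\mathcal{B}$. Its right-hand side asserts the existence, for each finite $X \subset_{\tmop{fin}} A$ and $Y \subset_{\tmop{fin}} B \backslash A$, of a set $\hat{Y} \subset A$ and an isomorphism $h : X \cup Y \rightarrow X \cup \hat{Y}$ between the non-closed structures $\langle X\cup Y, \bar{f}^{\mathcal{B}}|_{X\cup Y}, \bar{R}^{\mathcal{B}}|_{X\cup Y}, \bar{c}^{\mathcal{B}}|_{X\cup Y}\rangle$ and $\langle X\cup \hat{Y}, \bar{f}^{\mathcal{B}}|_{X\cup \hat{Y}}, \bar{R}^{\mathcal{B}}|_{X\cup \hat{Y}}, \bar{c}^{\mathcal{B}}|_{X\cup \hat{Y}}\rangle$ fixing $X$ pointwise. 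Since $\bar{f} = \bar{c} = \emptyset$, the functional-symbol and constant-symbol clauses in the definition of isomorphism are empty, so the assertion ``$h$ is an isomorphism of these non-closed structures'' says precisely that $h$ is a bijection preserving and reflecting every $R \in \bar{R}$, i.e.\ an isomorphism from $\langle X\cup Y, \bar{R}^{\mathcal{B}}|_{X\cup Y}\rangle$ onto $\langle X\cup\hat{Y}, \bar{R}^{\mathcal{B}}|_{X\cup\hat{Y}}\rangle$. Hence the two formulations match verbatim, and the equivalence follows.

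There is essentially no obstacle here; the one point that warrants care --- and that the remark following Theorem \ref{A<less>_s_1B_equivalence} isolates --- is that the equality symbol must genuinely be present in $\mathcal{L}$, since it is only the subformula $\neg(z_i \approx z_j)$ of the witnessing conjunction $\Gamma$ in that proof which forces $h$ to be injective; the hypothesis of the present statement explicitly includes equality, so exactly what is needed is available and nothing is lost. Alternatively, one could reprove the statement from scratch along the lines of the proof of Theorem \ref{A<less>_s_1B_equivalence}, with $\Gamma$ now a conjunction of relational atoms $R(q)$ and their negations only, with Lemma \ref{Isom.Terminos} trivial because the only terms are variables, and with Theorem \ref{A|=F[w:=l]_iff_B|=F[w:=hl]} applied unchanged; but the reduction described above is shorter and makes the needed bookkeeping transparent.
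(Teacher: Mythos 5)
Your reduction is correct and is exactly what the paper intends: the paper states this theorem without proof, immediately after remarking that it is the "more classical version" of Theorem \ref{A<less>_s_1B_equivalence}, so the specialization to a purely relational signature (where non-closed structures, substructures, and isomorphisms all collapse to their ordinary counterparts) is the intended argument. Your additional observation about the role of the equality symbol in forcing injectivity matches the paper's own remark and is the right point to flag.
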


Finally, let us state a final proposition that is very useful while working
with Carlson's $<_1$-relation.

\begin{proposition}
  \label{iso.restriction}Let $(C, \overline{R^{}}^C, \overline{f^{}}^C,
  \bar{c})$, $(Q, \overline{R^{}}^Q, \bar{f}^Q, \bar{q})$ be structures of a
  language $L$. Suppose \\
  $(B, \overline{R^{}}^B, \overline{f^{}}^B, \bar{b}) \subset (C,
  \overline{R^{}}^C, \overline{f^{}}^C, \bar{c})$, that is, $B \subset C$,
  $R^B = R^C \cap B^n$ for any n-ary relation $R^C$, $f^B = f^C |_B$ for any
  function $f^C$ and any distinguished element $b$ of $B$ is a distinguished
  element of $C$.
  
  Suppose $h : (C, \overline{R^{}}^C, \overline{f^{}}^C, \bar{c})
  \longrightarrow (h [C], \overline{R^{}}^{h [C]}, \overline{f^{}}^{h [C]},
  \overline{h (c)}) \subset (Q, \overline{R^{}}^Q, \bar{f}^Q, \bar{q})$ is an
  isomorphism.
  
  Then $h|_B : (B, \overline{R^{}}^B, \overline{f^{}}^B, \bar{b})
  \longrightarrow (h [B], \overline{R^{}}^{h [B]}, \overline{f^{}}^{h [B]},
  \overline{h (b)})$ is an isomorphism.
\end{proposition}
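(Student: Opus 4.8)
The plan is to verify, one clause at a time, the defining conditions of the isomorphism notion (bijectivity, preservation of constants, preservation of relations, and the partial preservation of functions) for the map $h|_B$, where the codomain is understood to carry the substructure induced from $h[C]$: that is, $R^{h[B]} = R^{h[C]} \cap (h[B])^n$, $f^{h[B]} = f^{h[C]}|_{h[B]}$, and the distinguished elements are the $h(b)$. First I would record the elementary remark that, since $(B,\dots) \subset (C,\dots)$ is a substructure and $h$ is a bijection, $h[B] \subset h[C]$ is a substructure in exactly the same fashion, so that every interpretation on $h[B]$ is just the restriction of the corresponding one on $h[C]$; this is what makes the diagram chase go through.

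Bijectivity is immediate: $h|_B$ is injective because $h$ is, and it is onto $h[B]$ by definition of the codomain. For a constant symbol $c$ we have $c^B = c^C$ (as $B$ is a substructure of $C$), hence $h|_B(c^B) = h(c^C) = c^{h[C]} = c^{h[B]}$, using that $h$ is an isomorphism $C \to h[C]$ and that $h[B]$ carries the induced structure. For an $n$-ary relation symbol $R$ and $a_1,\dots,a_n \in B$ I would chain the equivalences $R^B(a_1,\dots,a_n) \iff R^C(a_1,\dots,a_n) \iff R^{h[C]}(h(a_1),\dots,h(a_n)) \iff R^{h[B]}(h(a_1),\dots,h(a_n))$, the outer two steps by the definitions of induced substructure ($R^B = R^C \cap B^n$, $R^{h[B]} = R^{h[C]} \cap (h[B])^n$, and $h(a_i) \in h[B]$) and the middle step because $h$ is an isomorphism.

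The only clause requiring any care is the one for function symbols, since in this setting a function symbol is interpreted only partially on the universe of a (non-closed) substructure. For $f$ of arity $n$ and $a_1,\dots,a_n \in B$ I would argue as follows: if $f^B(a_1,\dots,a_n) = b \in B$, then $f^C(a_1,\dots,a_n) = b$ as well, so since $h$ is an isomorphism $f^{h[C]}(h(a_1),\dots,h(a_n)) = h(b) \in h[B]$, and therefore $f^{h[B]}(h(a_1),\dots,h(a_n)) = h(b) = h|_B(f^B(a_1,\dots,a_n))$, which yields both the membership and the equality at once. For the converse direction of the membership equivalence I would invoke Remark \ref{h^-1_is_iso_too}: $h^{-1} : h[C] \to C$ is also an isomorphism, so if $f^{h[B]}(h(a_1),\dots,h(a_n)) = c' \in h[B]$, then applying $h^{-1}$ gives $f^C(a_1,\dots,a_n) = h^{-1}(c') \in B$, whence $f^B(a_1,\dots,a_n) \in B$.

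Collecting the verified clauses shows that $h|_B$ is an isomorphism onto $(h[B], \overline{R}^{h[B]}, \overline{f}^{h[B]}, \overline{h(b)})$, which is the assertion. The main obstacle is merely the bookkeeping of which structure's interpretation of $f$ (resp. $R$) is being invoked at each step; in particular, for the partial-function clause one cannot argue the $\Leftarrow$ direction of the membership equivalence directly and must pass to the inverse isomorphism $h^{-1}$ via Remark \ref{h^-1_is_iso_too}.
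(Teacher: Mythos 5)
Your proposal is correct and follows essentially the same route as the paper: verify each clause of the isomorphism definition directly, with the only delicate point being the partial-function clause, where the backward membership implication is obtained by passing through $h^{-1}$ (equivalently, the paper's appeal to injectivity of $h$). Your explicit preliminary remark that $h[B]$ carries the structure induced from $h[C]$ is a helpful clarification the paper leaves implicit, but the substance of the argument is the same.
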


\begin{proof}
  For any $a_1, \ldots, a_n \in B$ and any relation $R^B$ we have $R^B (a_1,
  \ldots, a_n) \Longleftrightarrow R^C (a_1, \ldots, a_n) \Longleftrightarrow
  R^{h [C]} (h (a_1), \ldots, h (a_n)) \Longleftrightarrow R^{h [B]} (h|_B
  (a_1), \ldots, h|_B (a_n))$.
  
  Clearly $b \in B$ is a distinguished element iff $h (b) = h|_B (b) \in h
  [B]$ is a distinguished element.
  
  Let's see that the operations behave also correctly (of course the problem
  is with the closure of such operations):
  
  Let $a_1, \ldots, a_n \in B$. Suppose $f^C (a_1, \ldots, a_n) = f^B (a_1,
  \ldots, a_n) \in B$. Then \\
  $f^{h [C]} (h (a_1), \ldots, h (a_n)) \in h [C]$ and $f^{h [C]} (h (a_1),
  \ldots, h (a_n)) = h (f^C (a_1, \ldots, a_n)) = h (f^B (a_1, \ldots, a_n))$.
  Clearly $h (a_1), \ldots, h (a_n) \in h [B] \subset h [C]$ and so from the
  previous equalities we have $f^{h [B]} (h|_B (a_1), \ldots, h|_B (a_n)) =
  f^{h [C]} (h (a_1), \ldots, h (a_n)) = h (f^B (a_1, \ldots, a_n)) \in h
  [B]$.
  
  Now suppose $f^{h [B]} (h|_B (a_1), \ldots, h|_B (a_n)) \in h [B]$. Then
  there exists $a \in B \subset C$ such that $h (a) = f^{h [B]} (h|_B (a_1),
  \ldots, h|_B (a_n))$. \ \ \ \ \ \ {\tmstrong{(A)}}
  
  On the other hand, $f^{h [C]} (h (a_1), \ldots, h (a_n)) = f^{h [B]} (h|_B
  (a_1), \ldots, h|_B (a_n)) \in h [B] \subset h [C]$; then $f^C (a_1, \ldots,
  a_n)) \in C$ and \\
  $h (f^C (a_1, \ldots, a_n)) = f^{h [C]} (h (a_1), \ldots, h
  (a_n)) = f^{h [B]} (h|_B (a_1), \ldots, h|_B (a_n))$. From this and (A) we
  have found that $h (f^C (a_1, \ldots, a_n)) = h (a)$ and therefore, since
  $h$ is bijective, $f^C (a_1, \ldots, a_n) = a \in B$.
\end{proof}

\end{document}